\newtheorem{thm}{Theorem}[section]
\newtheorem{df}{Definition}[section]
\newtheorem{rmk}{Remark}[section]
\newtheorem{prop}{Proposition}[section]
\newtheorem{lm}{Lemma}[section]
\numberwithin{equation}{section}
\def\@setauthors{%
  \begingroup
  \def\thanks{\protect\thanks@warning}%
  \trivlist
  \centering\footnotesize \@topsep30\p@\relax
  \advance\@topsep by -\baselineskip
  \item\relax
  \author@andify\authors
  \def\\{\protect\linebreak}%
  \authors%
  \ifx\@empty\contribs
  \else
    ,\penalty-3 \space \@setcontribs
    \@closetoccontribs
  \fi
  \endtrivlist
  \endgroup
}
\title{\textbf{Stackelberg-Nash Controllability for Abstract Stochastic Evolution Equations and Applications}}
\author{Abdellatif Elgrou$^\dagger$ \;\;and\;\; Omar Oukdach$^\ddagger$}
\date{}
\begin{document}

\maketitle
\vspace{-2em}

\begin{center}
\small
$^\dagger $Cadi Ayyad University, Faculty of Sciences Semlalia, Marrakesh, Morocco. \\
Email: \texttt{abdoelgrou@gmail.com} \\
\vspace{0.4cm}
$^\ddagger $Moulay Ismaïl University of Meknes, FST Errachidia, PAM Laboratory, BP 50, Boutalamine, Errachidia, Morocco. \\
Email: \texttt{omar.oukdach@gmail.com}
\end{center}

\vspace{1em}

\begin{abstract} 
This paper presents the concepts of exact, null, and approximate controllability in the Stackelberg-Nash sense for abstract forward and backward stochastic evolution equations, involving two types of controls: leaders and followers. We begin by proving the existence and uniqueness of the Nash equilibrium, as well as its characterization for fixed leader controls. We then establish a duality between these controllability concepts and the corresponding observability properties. Finally, we apply our theoretical results to the forward and backward stochastic heat equations. The results for the backward heat equation are obtained by deriving a new Carleman estimate.
\end{abstract}
\vspace{1em}
\noindent\textbf{Keywords:} Stackelberg-Nash strategies; Controllability; Stochastic evolution equations; Forward and backward stochastic parabolic equations; Observability; Carleman estimates.\\\\
\noindent\textbf{AMS Mathematics Subject Classification:} 93E20, 93B05, 35Q89, 93B07.
\section{Introduction}
Let \( T > 0 \) and \( (\Omega, \mathcal{F}, \mathbf{F}, \mathbb{P}) \) (with \( \mathbf{F} = \{\mathcal{F}_t\}_{t \in [0,T]} \)) be a fixed complete filtered probability space satisfying the usual condition. Let \( W(\cdot) \) be a \( V \)-valued, \( \mathbf{F} \)-adapted, cylindrical Brownian motion on \( (\Omega, \mathcal{F}, \mathbf{F}, \mathbb{P}) \), where \( \mathbf{F} \) is the corresponding natural filtration (generated by \( W(\cdot) \)). 

We consider the following forward stochastic linear  evolution equation
\[
\begin{cases}
\begin{array}{l}
dy = \left[ \mathcal{A}y + F(t)y + D_1u_1 + \sum_{i=1}^m B_iv_i \right] dt + \left[ G(t)y + D_2u_2 \right] dW(t) \quad \textnormal{in} \, (0,T], \\
y(0) = y_0,
\end{array}
\end{cases}
\]
where \( y_0 \) is the initial condition, and \( y \) is the state variable. Additionally, we examine the following backward stochastic linear  evolution equation
\[
\begin{cases}
\begin{array}{l}
dy = \left[ -\mathcal{A}y + F(t)y + G(t)Y + D_1u_1 + \sum_{i=1}^m B_iv_i \right] dt + \left[ Y + D_2u_2 \right] dW(t) \quad \textnormal{in} \, [0,T), \\
y(T) = y_T,
\end{array}
\end{cases}
\]
where \( y_T \) is the terminal condition, and \( (y,Y) \) is the state variable. In both systems, \( (u_1, u_2) \) and \( (v_1, \dots, v_m) \) represent the controls in appropriate spaces. The operators appearing in these systems will be precisely defined in the next section.

This paper presents the concepts of exact, null, and approximate controllability in the context of Stackelberg and Nash strategies for the forward and backward stochastic evolution equations outlined above, as studied in the framework of game theory. As in classical control theory, we establish a duality between these controllability concepts and suitable observability properties. To briefly describe the Stackelberg-Nash strategy, we assume that the controls are divided into two classes: leaders and followers. Each control has a specific objective to achieve, optimally. In this paper, we assume that the leaders aim to achieve a controllability goal, while the followers seek to keep the state close to fixed targets throughout the time interval. In the next section, we will introduce suitable cost functions that precisely describe the aforementioned objectives.

In contrast to existing literature, the present paper is the first to explore the duality between Stackelberg-Nash controllability and the associated observability properties in the context of abstract stochastic evolution equations. While there is a rich body of work on Stackelberg-Nash controllability for various deterministic partial differential equations (PDEs), as outlined in the references below, the stochastic case has received relatively limited attention, even for standard examples of stochastic PDEs.

Control problems are among the most fundamental issues in dynamical systems theory, and controllability is a key problem within control theory. For a given dynamic system, the controllability problem can be stated as follows: for two distinct states, is it possible to drive the system from the first state to the second within a specified finite time using one or more appropriate forcing functions? Such problems arise in various fields, including industry, engineering, biology, and physics. For results on classical exact, null, and approximate controllability notions for forward and backward stochastic linear evolution equations, we refer to \cite[Chapter 7]{lu2021mathematical} and \cite{lulin24}. Once the controllability problem is solved, an important question is how to select the `best' control from the admissible options. This leads us to optimal control problems. In these problems, the objective is to modify the system's dynamics in such a way that some quantity—such as control energy or control time—is optimized.

In many real-life situations, introducing multiple controls into a dynamic system is more realistic. In such cases, each control has a specific objective to achieve, and this needs to be done optimally. This leads to multi-objective control problems. Moreover, when there is a hierarchical structure in the decision-making process between the different controls, we encounter hierarchical multi-objective control problems. Unlike classical optimal control problems, several distinct notions of optimal solutions are considered, depending on the specific characteristics of the problem at hand. In general, it is not possible to find a single solution that is optimal for all objectives simultaneously. Therefore, the concepts of strategies or equilibria are used as solution frameworks. Among the well-known strategies in the literature are the Pareto Strategy \cite{Pa96}, Stackelberg Strategy \cite{St34}, and Nash Strategy \cite{Na51}.

In the context of PDE control, several papers address the topic of Stackelberg-Nash strategies. In the seminal papers \cite{LiPa} and \cite{LiHy}, J.-L. Lions introduced and studied the Stackelberg strategy for hyperbolic and parabolic equations. Subsequently, the authors in \cite{D02} and \cite{DL04} studied the existence and uniqueness of the Stackelberg-Nash equilibrium, as well as its characterization. The paper \cite{GRP02} addresses these questions from both a theoretical and numerical perspective. Additionally, the Stackelberg-Nash strategy for Stokes systems has been studied in \cite{GMR13}. All of the aforementioned results focus on hierarchical control for evolution equations, specifically in the case of approximate controllability. For exact Stackelberg-Nash controllability, we refer to \cite{AAF,ArFeGu24,ArCaSa15,BoMaOuNash,BoMaOuNash2}, where Carleman estimates play a crucial role. We also refer to \cite{AFS}, where the authors addressed the hierarchical exact controllability of parabolic equations with distributed and boundary controls. The same method was applied to parabolic coupled systems in \cite{HSP18,HSP16}.

Stackelberg-Nash strategies are rarely explored in the context of control problems for stochastic dynamic systems, particularly for stochastic PDEs. To the best of our knowledge, the works \cite{oukBouElgMan}, \cite{stakNahSPEs}, \cite{StNashdeg}, and \cite{egoukfcoupls} are among the few addressing Stackelberg-Nash null controllability for certain classes of stochastic parabolic equations.  The first study investigates stochastic parabolic equations with dynamic boundary conditions, while the second focuses on those with Dirichlet boundary conditions. The third examines degenerate stochastic parabolic equations, and the fourth explores coupled stochastic parabolic systems.

Now, let us first give the precise definition of Stackelberg strategy. Let \( \mathscr{H}_1 \) and \( \mathscr{H}_2 \) be two Hilbert spaces, and \( J_i: \mathscr{H}_1 \times \mathscr{H}_2 \longrightarrow \mathbb{R} \), for \( i = 1, 2 \), be two cost functions. For Stackelberg's strategy, we assume that we have a hierarchical game with two players: the leader and the follower, with associated cost functions \( J_1 \) and \( J_2 \), respectively. The Stackelberg solution to the game is achieved when the follower is forced to wait until the leader announces his policy before making his own decision. More precisely, we have the following definition, see for instance \cite{StDf}.
\begin{df}\label{defst}
A vector \( (v_1^*, v_2^*) \in \mathscr{H}_1 \times \mathscr{H}_2 \) is a Stackelberg strategy for \( J_1 \) and \( J_2 \) if 
\begin{enumerate}[label=(\textbf{\roman*})]
\item There is a function \( F: \mathscr{H}_1 \longrightarrow \mathscr{H}_2 \) such that
\[
J_2(v_1, F(v_1)) = \inf_{v \in \mathscr{H}_2} J_2(v_1, v), \quad \forall v_1 \in \mathscr{H}_1.
\]
\item The control \( v_1^* \in \mathscr{H}_1 \) satisfies 
\[
J_1(v_1^*, F(v_1^*)) = \inf_{v \in \mathscr{H}_1} J_1(v, F(v)).
\]
\item The pair \( (v_1^*, v_2^*) \) satisfies \( v_2^* = F(v_1^*) \).
\end{enumerate}
\end{df}
\noindent We now define the Nash strategy for $m$ players, $P_1, \dots, P_m$ (with $m \geq 2$ being an integer). Roughly speaking, it consists of a combination of strategies such that no player improves their gain by changing their strategy, while the other players do not change their strategy. Let $\mathscr{H}_1, \dots, \mathscr{H}_m$ be Hilbert spaces, representing the strategy spaces of the players $P_1, \dots, P_m$, respectively, and $\mathscr{H} = \mathscr{H}_1 \times \dots \times \mathscr{H}_m$. Let $J_i: \mathscr{H} \longrightarrow \mathbb{R}$, for $i=1,2,\dots,m$, be the cost function for the player $P_i$. We have the following definition.
\begin{df}
A vector $v^*=(v^*_1,\dots, v^*_m)\in \mathscr{H}$ is   a Nash equilibrium for $(J_1,\dots,  J_m)$ if 
 \begin{equation*}
J_i(v^*_1,\dots, v^*_m)= \inf\limits_{v\in\mathscr{H}_i} J_i(v^*_1,\dots v_{i-1}^*,v,v_{i+1}^*, \dots ,v^*_m), \quad\textnormal{for all}\;\; i=1,2,\dots,m.
\end{equation*}
\end{df}
\noindent By the well-known results of convex analysis and optimization, we have the following characterization of the Nash equilibrium.
\begin{rmk}
Assume that $J_i: \mathscr{H} \longrightarrow \mathbb{R}$, $i = 1,2, \dots, m$, are differentiable and convex.  
A vector $(v_1^*, \dots, v_m^*) \in \mathscr{H}$ is a Nash equilibrium for $(J_1, \dots, J_m)$ if and only if 
\begin{equation*}
\dfrac{\partial J_i}{\partial v_i}(v_1^*, \dots, v_m^*) = 0, \quad \textnormal{for all} \;\; i=1,2,\dots,m.
\end{equation*}	
\end{rmk}
To present the Stackelberg-Nash equilibrium, we suppose that there is a player $P_{m+1}$ with a cost function $J: \mathscr{K} \times \mathscr{H} \longrightarrow \mathbb{R}$, where $\mathscr{K}$ is another Hilbert space. We assume that the player $P_{m+1}$ is the leader, and the players $P_1, \dots, P_m$ with cost functions $J_1, \dots, J_m$, respectively, are the followers. Here, $J_1, \dots, J_m$ are defined on $\mathscr{K} \times \mathscr{H}$. The Stackelberg-Nash equilibrium is defined as follows.
\begin{df}
A vector $(v^*, v_1^*, \dots, v_m^*) \in \mathscr{K} \times \mathscr{H}$ is a Stackelberg-Nash equilibrium for $(J, J_1, \dots, J_m)$ with player $P_{m+1}$ as the leader and players $P_1, \dots, P_m$ as the followers if
\begin{enumerate}[label=(\textbf{\roman*})]
\item There exist functions $F_i: \mathscr{K} \longrightarrow \mathscr{H}_i$ ($i=1,2,\dots,m$) such that
\begin{equation*}
    J_i(v, F_1(v), \dots, F_m(v)) = \inf\limits_{w \in \mathscr{H}_i} J_i(v, F_1(v), \dots, F_{i-1}(v), w, F_{i+1}(v), \dots, F_m(v)), \quad \forall v \in \mathscr{K}.
\end{equation*}
\item $v^* \in \mathscr{K}$ satisfies
$$ J(v^*, F_1(v^*), \dots, F_m(v^*)) = \inf\limits_{v \in \mathscr{K}} J(v, F_1(v), \dots, F_m(v)). $$
\item $(v^*, v_1^*, \dots, v_m^*)$ satisfies $v_i^* = F_i(v^*)$, for $i=1,2,\dots,m$.
\end{enumerate}
\end{df}

The paper is organized as follows. In Section \ref{sec2}, we provide the formulation of the problem under consideration. In Section \ref{sec3}, we establish the well-posedness of the Nash equilibrium problem and its characterization. In Section \ref{sec04}, we characterize the Stackelberg-Nash controllability concepts using observability properties of the corresponding adjoint stochastic evolution systems. Finally, Section \ref{sec55} presents some illustrative examples of the Stackelberg-Nash controllability of forward and backward stochastic heat equations.

\section{Problem formulation}\label{sec2}
We first introduce some notations. For Banach spaces $X$ and $Y$, we denote by $\mathcal{L}(X;Y)$ the Banach space of all bounded linear operators from $X$ to $Y$, equipped with the usual operator norm. When $X = Y$, we simply write $\mathcal{L}(X)$ instead of $\mathcal{L}(X;X)$. For any operator $L \in \mathcal{L}(X;Y)$, we denote by $L^* \in \mathcal{L}(Y'; X')$ its adjoint operator, where $X'$ and $Y'$ are the dual spaces of $X$ and $Y$, respectively. We also denote by $\mathcal{S}(X)$ the Banach space of all self-adjoint operators on $X$. The norm and inner product of a given Hilbert space \( \mathbf{H} \) will be denoted by \( |\cdot|_\mathbf{H} \) and \( \langle \cdot, \cdot \rangle_\mathbf{H} \), respectively.

Let $\mathcal{H}$ and $V$ be separable Hilbert spaces, which are identified with their dual spaces. Denote by $\mathcal{L}^0_2 = \mathcal{L}_2(V; \mathcal{H})$ the Hilbert space of all Hilbert-Schmidt operators from $V$ to $\mathcal{H}$, equipped with the inner product
$$\langle \mathbb{L}, \mathbb{S} \rangle_{\mathcal{L}^0_2} = \sum_{j=1}^\infty \langle \mathbb{L}e_j, \mathbb{S}e_j \rangle_{\mathcal{H}}, \qquad \forall \mathbb{L}, \mathbb{S} \in \mathcal{L}^0_2,$$
where $\{e_j\}_{j \geq 1}$ be an orthonormal basis of \(V\). Assume that \((\mathcal{A}, \mathcal{D}(\mathcal{A}))\) is a linear operator that generates a \(C_0\)-semigroup \((S(t))_{t \geq 0}\) on \(\mathcal{H}\), and let \(\mathcal{A}^*\) denote its adjoint operator. It follows that \(\mathcal{A}^*\) generates the \(C_0\)-semigroup \((S(t)^*)_{t \geq 0}\), the adjoint of \((S(t))_{t \geq 0}\). In this paper, \(\mathcal{H}\) is taken as the state space for the systems under consideration.

Let $T > 0$ and $(\Omega, \mathcal{F}, \mathbf{F}, \mathbb{P})$ (with $\mathbf{F} = \{\mathcal{F}_t\}_{t \in [0,T]}$) be a fixed complete filtered probability space satisfying the usual condition. Let $W(\cdot)$ be a $V$-valued, $\mathbf{F}$-adapted, cylindrical Brownian motion on $(\Omega, \mathcal{F}, \mathbf{F}, \mathbb{P})$. In the sequel, to simplify the presentation, we only consider $\mathbf{F}$ is the corresponding natural filtration (generated
by $W(\cdot)$). For a Banach space $\mathcal{X}$, we denote by $C([0,T]; \mathcal{X})$ the space of all continuous $\mathcal{X}$-valued functions defined on $[0,T]$. We use $L^2_{\mathcal{F}_t}(\Omega; \mathcal{X})$ to refer to the space of all $\mathcal{X}$-valued $\mathcal{F}_t$-measurable random variables $g$ such that $\mathbb{E}\left(|g |_{\mathcal{X}}^2\right) < \infty$.  We define $L^2_{\mathcal{F}}(0,T; \mathcal{X})$ as the space of all $\mathcal{X}$-valued $\mathbf{F}$-adapted processes $\varphi(\cdot)$ such that  $ \mathbb{E}\left( \left| \varphi(\cdot) \right|^2_{L^2(0,T; \mathcal{X})} \right) < \infty. $ Similarly, $L^\infty_{\mathcal{F}}(0,T; \mathcal{X})$ is the space of all $\mathcal{X}$-valued $\mathbf{F}$-adapted essentially bounded processes. $L^2_{\mathcal{F}}(\Omega; C([0,T]; \mathcal{X}))$ is the space of all $\mathcal{X}$-valued $\mathbf{F}$-adapted continuous processes $\varphi(\cdot)$ such that $\mathbb{E}\left( \left| \varphi(\cdot) \right|^2_{C([0,T]; \mathcal{X})} \right) < \infty. $ Finally, $C_{\mathcal{F}}([0,T]; L^2(\Omega; \mathcal{X}))$ denotes the space of all $\mathcal{X}$-valued $\mathbf{F}$-adapted processes $\varphi(\cdot)$ such that $\varphi:[0,T] \to L^2_{\mathcal{F}_T}(\Omega; \mathcal{X})$ is continuous. All the above spaces are Banach spaces equipped with the canonical norms. For further details on these stochastic concepts, we refer to \cite[Chapter 2]{lu2021mathematical}. Throughout this paper, unless otherwise stated, we shall denote by \( C \) a positive constant, which may vary from one place to another.
 
Let $m \geq 2$ be an integer, and let $\mathcal{U}_1$, $\mathcal{U}_2$, and $\mathcal{V}_i$ ($i = 1, 2, \dots, m$) be Hilbert spaces representing the control spaces. Throughout this paper, we adopt the following notations:
$$
\mathcal{U}_T = L^2_{\mathcal{F}}(0,T; \mathcal{U}_1) \times L^2_{\mathcal{F}}(0,T; \mathcal{U}_2), \quad \mathcal{V}_T = L^2_{\mathcal{F}}(0,T; \mathcal{V}_1) \times \dots \times L^2_{\mathcal{F}}(0,T; \mathcal{V}_m).
$$

We first consider the following forward stochastic linear  evolution equation
\begin{equation}\label{asteqq1.1}
\begin{cases}
\begin{array}{l}
dy = \left[\mathcal{A}y + F(t)y + D_1u_1 + \sum_{i=1}^m B_i v_i \right] dt + \left[G(t)y + D_2u_2 \right] dW(t), \quad \text{in} \, (0,T], \\
y(0) = y_0,
\end{array}
\end{cases}
\end{equation}
where $y_0 \in L^2_{\mathcal{F}_0}(\Omega; \mathcal{H})$ is the initial state, $y$ is the state variable, $(u_1, u_2) \in \mathcal{U}_T$ are the leader controls, and $(v_1, \dots, v_m) \in \mathcal{V}_T$ are the follower controls. Additionally, $F \in L^\infty_{\mathcal{F}}(0,T; \mathcal{L}(\mathcal{H}))$, $G \in L^\infty_{\mathcal{F}}(0,T; \mathcal{L}(\mathcal{H}; \mathcal{L}^0_2))$, and the control operators are $D_1 \in \mathcal{L}(\mathcal{U}_1; \mathcal{H})$, $D_2 \in \mathcal{L}(\mathcal{U}_2; \mathcal{L}^0_2)$, $B_i \in \mathcal{L}(\mathcal{V}_i; \mathcal{H})$ for $i = 1, 2, \dots, m$.

For deterministic systems, a backward controllability problem can be transformed into a forward one by simply reversing the time variable, i.e., \( t \mapsto T - t \). However, this approach no longer applies to stochastic problems due to the adapted-ness requirement of stochastic processes. Therefore, we subsequently investigate the Stackelberg-Nash controllability of the following backward stochastic linear evolution equation
\begin{equation}\label{asteqq1.1back}
\begin{cases}
dy = \left[-\mathcal{A}y + F(t)y + G(t)Y + D_1u_1 + \sum_{i=1}^m B_iv_i\right] dt + \left[Y + D_2u_2\right] dW(t)\quad \text{in} \ [0,T), \\
y(T) = y_T,
\end{cases}
\end{equation}
where \(y_T \in L^2_{\mathcal{F}_T}(\Omega; \mathcal{H})\) is the terminal state, \((y, Y)\) is the state variable, \((u_1, u_2) \in \mathcal{U}_T\) are the leader controls, \((v_1, \dots, v_m) \in \mathcal{V}_T\) are the follower controls, \(F \in L^\infty_\mathcal{F}(0,T; \mathcal{L}(\mathcal{H}))\), and \(G \in L^\infty_\mathcal{F}(0,T; \mathcal{L}(\mathcal{L}^0_2; \mathcal{H}))\). The control operators are \(D_1 \in \mathcal{L}(\mathcal{U}_1; \mathcal{H})\), \(D_2 \in \mathcal{L}(\mathcal{U}_2; \mathcal{L}^0_2)\), and \(B_i \in \mathcal{L}(\mathcal{V}_i; \mathcal{H})\) for \(i = 1, 2, \dots, m\).

\begin{rmk}
In \eqref{asteqq1.1}, we introduce two leaders: one in the drift term and the other in the diffusion term. This dual-leader structure is often essential in the classical controllability theory for stochastic PDEs, such as stochastic parabolic equations, as it plays a key role in deriving controllability results. In the case of the equation \eqref{asteqq1.1back}, it may initially seem that only one leader is required. However, we will show that the Stackelberg-Nash controllability problem for \eqref{asteqq1.1back} reduces to the classical controllability problem of a coupled backward-forward evolution system. Therefore, to appropriately address the forward equation, a second leader may also be introduced in the diffusion term of \eqref{asteqq1.1back}. This motivates the investigation of the specific form of \eqref{asteqq1.1back}.
\end{rmk}

From \cite[Chapter 3]{lu2021mathematical}, the system \eqref{asteqq1.1} admits a unique mild solution $y\in C_\mathcal{F}([0,T];L^2(\Omega;\mathcal{H}))$. Moreover, there exists a constant $C>0$ such that
\begin{align*}
\vert y\vert_{C_\mathcal{F}([0,T];L^2(\Omega;\mathcal{H}))} \leq C \Big(\vert y_0\vert_{L^2_{\mathcal{F}_0}(\Omega;\mathcal{H})} + \vert (u_1,u_2)\vert_{\mathcal{U}_T} + \vert(v_1,\dots,v_m)\vert_{\mathcal{V}_T}\Big).
\end{align*}
By \cite[Chapter 4]{lu2021mathematical}, the system \eqref{asteqq1.1back} admits a unique mild solution $$(y,Y)\in L^2_\mathcal{F}(\Omega;C([0,T];\mathcal{H}))\times L^2_\mathcal{F}(0,T;\mathcal{L}^0_2).$$ 
Furthermore, there exists a constant $C>0$ such that
\begin{align*}
\vert y\vert_{L^2_\mathcal{F}(\Omega;C([0,T];\mathcal{H}))}+\vert Y\vert_{L^2_\mathcal{F}(0,T;\mathcal{L}^0_2)} \leq C \Big(\vert y_T\vert_{L^2_{\mathcal{F}_T}(\Omega;\mathcal{H})} + \vert (u_1,u_2)\vert_{\mathcal{U}_T} + \vert(v_1,\dots,v_m)\vert_{\mathcal{V}_T}\Big).
\end{align*}

Let us now formulate the problem under consideration: For fixed target trajectories  \( y_{i,d} \in L^2_{\mathcal{F}}(0,T; \mathcal{H}) \) and \( Y_{i,d} \in L^2_{\mathcal{F}}(0,T; \mathcal{L}^0_2) \), $i=1,2,\dots,m$, we begin by introducing two kinds of functionals:
\begin{enumerate}[(1)]
    \item The \textbf{main functional} \( J \) is given by
    \[
    J(u_1, u_2) = \frac{1}{2} \mathbb{E} \int_0^T \left( |u_1|^2_{\mathcal{U}_1} + |u_2|^2_{\mathcal{U}_2} \right) \, dt.
    \]

    \item The \textbf{secondary functionals} are defined as follows: For each \( i = 1, 2, \dots, m \),
    \begin{itemize}
        \item For the forward system \eqref{asteqq1.1}:
\begin{align}\label{functji1sec}
J_i^F(u_1,u_2;v_1,\dots,v_m) =  \frac{\alpha_i}{2} \mathbb{E} \int_0^T  |K_i(y - y_{i,d})|^2_{\mathcal{H}} \,dt + \frac{\beta_i}{2} \mathbb{E}\int_0^T \langle R_iv_i,v_i\rangle_{\mathcal{V}_i} \,dt,
\end{align}
        where \( \alpha_i > 0 \), \( \beta_i \geq 1 \), \( K_i \in \mathcal{S}(\mathcal{H}) \), and \( R_i \in \mathcal{S}(\mathcal{V}_i) \) (with \( R_i \) are invertible).
        
        \item For the backward system \eqref{asteqq1.1back}:
       \begin{align}\label{functji1secback}
\begin{aligned}
J_i^B(u_1,u_2;v_1,\dots,v_m) = &\, \frac{\alpha_i}{2} \mathbb{E} \int_0^T  |K_i(y - y_{i,d})|^2_{\mathcal{H}} \,dt+\frac{\widetilde{\alpha}_i}{2} \mathbb{E} \int_0^T  |\widetilde{K}_i(Y - Y_{i,d})|^2_{\mathcal{L}^0_2} \,dt\\
&+ \frac{\beta_i}{2} \mathbb{E}\int_0^T \langle R_iv_i,v_i\rangle_{\mathcal{V}_i} \,dt,
\end{aligned}
\end{align}
        where \( \alpha_i,\widetilde{\alpha}_i > 0 \), \( \beta_i \geq 1 \), \( K_i \in \mathcal{S}(\mathcal{H}) \), \( \widetilde{K}_i \in \mathcal{S}(\mathcal{L}^0_2) \), and \( R_i \in \mathcal{S}(\mathcal{V}_i) \) (with \( R_i \) are invertible). 
    \end{itemize}
\end{enumerate}
We also assume that there exists a constant \( r_0 > 0 \) such that
\begin{align}\label{asumRi}
\langle R_i \xi, \xi \rangle_{\mathcal{V}_i} \geq r_0 |\xi|_{\mathcal{V}_i}^2, \quad \forall \xi \in \mathcal{V}_i, \quad i = 1, 2, \dots, m.
\end{align}
Throughout this paper, we assume that the pair \( (u_1, u_2) \) of players, with the main cost functional \( J \), are the leaders, while the \( m \)-tuple \( (v_1, \dots, v_m) \) of players, with the secondary cost functionals \( (J_1^F, \dots, J_m^F) \) (resp. \( (J_1^B, \dots, J_m^B) \)), are the followers.

To achieve the controllability of both deterministic and stochastic PDEs with source terms, we generally impose these source terms in appropriately weighted spaces, using a suitable weight function \( \rho \). In our case, we fix a positive (deterministic) weight function \( \rho = \rho(t) \) and introduce the following weighted \( L^2 \)-space:
\[
L^{2,\rho}_{\mathcal{F}}(0,T; \mathbf{H}) = \left\{ \varphi: (0,T) \times \Omega \to \mathbf{H} : \, \varphi(\cdot) \,\text{ is } \,\mathbf{F}\text{-adapted, and } \mathbb{E} \int_0^T \rho^2 |\varphi|_{\mathbf{H}}^2 \, dt < \infty \right\},
\]
equipped with the norm
\[
|\varphi|_{L^{2,\rho}_{\mathcal{F}}(0,T; \mathbf{H})} = \left( \mathbb{E} \int_0^T \rho^2 |\varphi|_{\mathbf{H}}^2 \, dt \right)^{1/2}.
\]
Moreover, we assume that there exists a constant \( \rho_0 > 0 \) such that
\begin{align}\label{esttforrho0}
    \rho^{-1}(t) \leq \rho_0, \quad \forall t \in [0,T].
\end{align}

We now define the notions of exact, null, and approximate controllability in the sense of Stackelberg-Nash for the stochastic evolution equations \eqref{asteqq1.1} and \eqref{asteqq1.1back}.
\begin{df}\label{deff2.1}
\begin{enumerate}[label=\textbf{\arabic*)}]
\item  System \eqref{asteqq1.1} is optimally exactly  controllable at time $T$ in the sense of Stackelberg-Nash if for any   $y_0\in L^2_{\mathcal{F}_0}(\Omega;\mathcal{H})$, $y_T\in L^2_{\mathcal{F}_T}(\Omega;\mathcal{H})$, and target functions $y_{i,d}\in L^{2,\rho}_{\mathcal{F}}(0,T;\mathcal{H})$, $i=1,2,\dots,m$, there exist leader controls  $(\widehat{u}_1,\widehat{u}_2)\in \mathcal{U}_T$  minimizing the functional  $J$ and an associated Nash equilibrium $(v^*_1,\dots,v^*_m)\in \mathcal{V}_T$  for $(J_1^F,\dots,J_m^F)$ such that the corresponding solution $\widehat{y}$ of  the system \eqref{asteqq1.1} satisfies that $\widehat{y}(T)=y_T,\;\mathbb{P}\textnormal{-a.s.}$
\item System \eqref{asteqq1.1}  is optimally null controllable at time $T$ in the sense of Stackelberg-Nash if for any  $y_0\in L^2_{\mathcal{F}_0}(\Omega;\mathcal{H})$, and target functions $y_{i,d}\in L^{2,\rho}_{\mathcal{F}}(0,T;\mathcal{H})$, $i=1,2,\dots,m$, there exist leader controls $(\widehat{u}_1,\widehat{u}_2)\in \mathcal{U}_T$  minimizing the functional  $J$ and an associated Nash equilibrium $(v^*_1,\dots,v^*_m)\in \mathcal{V}_T$ for $(J_1^F,\dots,J_m^F)$ such that the corresponding solution $\widehat{y}$ of  the system \eqref{asteqq1.1}  satisfies $\widehat{y}(T)=0,\;\mathbb{P}\textnormal{-a.s.}$
\item System \eqref{asteqq1.1} is optimally approximately  controllable at time $T$ in the sense of Stackelberg-Nash if for any   $y_0\in L^2_{\mathcal{F}_0}(\Omega;\mathcal{H})$, $y_T\in L^2_{\mathcal{F}_T}(\Omega;\mathcal{H})$, $\varepsilon>0$  and target functions $y_{i,d}\in L^{2,\rho}_{\mathcal{F}}(0,T;\mathcal{H})$, $i=1,2,\dots,m$, there exist  leader controls  $(\widehat{u}_1,\widehat{u}_2)\in \mathcal{U}_T$  minimizing the functional  $J$ and an associated Nash equilibrium $(v^*_1,\dots,v^*_m)\in \mathcal{V}_T$ for $(J_1^F,\dots,J_m^F)$ such that the corresponding solution $\widehat{y}$ of  the system \eqref{asteqq1.1} satisfies that $\mathbb{E}|\widehat{y}(T)-y_T|_{\mathcal{H}}^2\leq \varepsilon.$
\end{enumerate}
\end{df}

\begin{df}
\begin{enumerate}[label=\textbf{\arabic*)}]
\item  System \eqref{asteqq1.1back} is optimally exactly  controllable at time $0$ in the sense of Stackelberg-Nash if for any   $y_T\in L^2_{\mathcal{F}_T}(\Omega;\mathcal{H})$, $y_0\in L^2_{\mathcal{F}_0}(\Omega;\mathcal{H})$, and target functions $y_{i,d}\in L^{2,\rho}_{\mathcal{F}}(0,T;\mathcal{H})$ and $Y_{i,d}\in L^{2,\rho}_{\mathcal{F}}(0,T;\mathcal{L}^0_2)$, $i=1,2,\dots,m$, there exist leader controls  $(\widehat{u}_1,\widehat{u}_2)\in \mathcal{U}_T$  minimizing the functional  $J$ and an associated Nash equilibrium $(v^*_1,\dots,v^*_m)\in \mathcal{V}_T$ for $(J^B_1,\dots,J^B_m)$ such that the corresponding solution $(\widehat{y},\widehat{Y})$ of  the system \eqref{asteqq1.1back} satisfies that $\widehat{y}(0)=y_0,\;\mathbb{P}\textnormal{-a.s.}$
\item System \eqref{asteqq1.1back}  is optimally null controllable at time $0$ in the sense of Stackelberg-Nash if for any $y_T\in L^2_{\mathcal{F}_T}(\Omega;\mathcal{H})$, and target functions $y_{i,d}\in L^{2,\rho}_{\mathcal{F}}(0,T;\mathcal{H})$ and $Y_{i,d}\in L^{2,\rho}_{\mathcal{F}}(0,T;\mathcal{L}^0_2)$, $i=1,2,\dots,m$, there exist leader controls $(\widehat{u}_1,\widehat{u}_2)\in \mathcal{U}_T$  minimizing the functional  $J$ and an associated Nash equilibrium $(v^*_1,\dots,v^*_m)\in \mathcal{V}_T$ for $(J^B_1,\dots,J^B_m)$ such that the corresponding solution $(\widehat{y},\widehat{Y})$ of  the system \eqref{asteqq1.1back}  satisfies $\widehat{y}(0)=0,\;\mathbb{P}\textnormal{-a.s.}$
\item System \eqref{asteqq1.1back} is optimally approximately   controllable at time $0$ in the sense of Stackelberg-Nash if for any   $y_T\in L^2_{\mathcal{F}_T}(\Omega;\mathcal{H})$, $y_0\in L^2_{\mathcal{F}_0}(\Omega;\mathcal{H})$, $\varepsilon>0$  and target functions $y_{i,d}\in L^{2,\rho}_{\mathcal{F}}(0,T;\mathcal{H})$ and $Y_{i,d}\in L^{2,\rho}_{\mathcal{F}}(0,T;\mathcal{L}^0_2)$, $i=1,2,\dots,m$, there exist leader controls  $(\widehat{u}_1,\widehat{u}_2)\in \mathcal{U}_T$  minimizing the functional  $J$ and an associated Nash equilibrium $(v^*_1,\dots,v^*_m)\in \mathcal{V}_T$ for $(J^B_1,\dots,J^B_m)$ such that the corresponding solution $(\widehat{y},\widehat{Y})$ of  the system \eqref{asteqq1.1back} satisfies that $\mathbb{E}|\widehat{y}(0)-y_0|_{\mathcal{H}}^2\leq \varepsilon.$
\end{enumerate}
\end{df}
\section{Nash equilibrium: Existence, uniqueness, and characterization}\label{sec3}
\subsection{For the forward system \eqref{asteqq1.1}}
In this subsection, for sufficiently large \( \beta_i \), we establish the existence and uniqueness of the Nash equilibrium for the functionals \( (J_1^F, \dots, J_m^F) \), as well as its characterization  using an adjoint backward system.
\begin{prop}\label{propp4.1}
There exists a large constant \( \overline{\beta} \geq 1 \) such that, if \( \beta_i \geq \overline{\beta} \) for \( i = 1, 2, \dots, m \), then for each \( (u_1, u_2) \in \mathcal{U}_T \), there exists a unique Nash equilibrium \( (v^*_1(u_1, u_2), \dots, v^*_m(u_1, u_2)) \in \mathcal{V}_T \) for the functionals \( (J_1^F, \dots, J_m^F) \) associated with \( (u_1, u_2) \).
\end{prop}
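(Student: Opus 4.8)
The plan is to reformulate the Nash-equilibrium problem as a single linear variational equation on $\mathcal{V}_T$ and solve it by the Lax--Milgram theorem, with the threshold $\overline{\beta}$ chosen precisely so as to force coercivity. Fix $(u_1,u_2)\in\mathcal{U}_T$. Since \eqref{asteqq1.1} is linear, I would first decompose the state as $y[v]=\bar y+\sum_{j=1}^m z_j[v_j]$, where $\bar y$ solves \eqref{asteqq1.1} with $(v_1,\dots,v_m)=0$ (and the fixed data $y_0,u_1,u_2$), and each $z_j[v_j]$ solves the homogeneous system
\begin{equation*}
dz_j=\bigl[\mathcal{A}z_j+F(t)z_j+B_jv_j\bigr]\,dt+G(t)z_j\,dW(t),\qquad z_j(0)=0.
\end{equation*}
The well-posedness estimate for \eqref{asteqq1.1} recalled above gives $|z_j[v_j]|_{C_\mathcal{F}([0,T];L^2(\Omega;\mathcal{H}))}\le C\,|v_j|_{L^2_\mathcal{F}(0,T;\mathcal{V}_j)}$, so $v_j\mapsto z_j[v_j]$ is a bounded linear map into $L^2_\mathcal{F}(0,T;\mathcal{H})$.

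Next I would observe that each $J_i^F$ is a differentiable convex functional of $v=(v_1,\dots,v_m)$: the first term in \eqref{functji1sec} is the square of the $L^2$-norm of the affine map $v\mapsto K_i(y[v]-y_{i,d})$, and the second is a positive quadratic form in $v_i$ by \eqref{asumRi}. Hence, by the first-order characterization of Nash equilibria recalled above, $v^*$ is a Nash equilibrium for $(J_1^F,\dots,J_m^F)$ if and only if $\partial_{v_i}J_i^F(v^*)=0$ for every $i$. Computing the Gateaux derivative and using that $K_i$ is self-adjoint together with $\partial_{v_i}y[v]\cdot h=z_i[h]$, the $i$-th condition reads
\begin{equation*}
\alpha_i\,\mathbb{E}\int_0^T\Bigl\langle K_i^2\bigl(\bar y+\textstyle\sum_{j=1}^m z_j[v_j^*]-y_{i,d}\bigr),\,z_i[h]\Bigr\rangle_{\mathcal{H}}dt+\beta_i\,\mathbb{E}\int_0^T\langle R_iv_i^*,h\rangle_{\mathcal{V}_i}\,dt=0
\end{equation*}
for all $h\in L^2_\mathcal{F}(0,T;\mathcal{V}_i)$. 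Collecting these $m$ equations, the Nash system is equivalent to the variational problem: find $v^*\in\mathcal{V}_T$ with $a(v^*,w)=\ell(w)$ for all $w\in\mathcal{V}_T$, where
\begin{equation*}
a(v,w)=\sum_{i=1}^m\Bigl[\alpha_i\,\mathbb{E}\int_0^T\bigl\langle K_i^2\textstyle\sum_{j=1}^m z_j[v_j],\,z_i[w_i]\bigr\rangle_{\mathcal{H}}dt+\beta_i\,\mathbb{E}\int_0^T\langle R_iv_i,w_i\rangle_{\mathcal{V}_i}\,dt\Bigr],
\end{equation*}
and $\ell(w)=-\sum_{i=1}^m\alpha_i\,\mathbb{E}\int_0^T\langle K_i^2(\bar y-y_{i,d}),z_i[w_i]\rangle_{\mathcal{H}}\,dt$.

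Boundedness of $a$ and continuity of $\ell$ follow at once from the estimate on $z_j$ together with $K_i\in\mathcal{S}(\mathcal{H})$ and $R_i\in\mathcal{S}(\mathcal{V}_i)$. The crux is coercivity. For the diagonal (control) part I would use \eqref{asumRi} to get $\sum_i\beta_i\,\mathbb{E}\int_0^T\langle R_iv_i,v_i\rangle_{\mathcal{V}_i}\,dt\ge r_0\bigl(\min_i\beta_i\bigr)|v|_{\mathcal{V}_T}^2$, while the state part is estimated by Cauchy--Schwarz and the bound on $z_j$ as
\begin{equation*}
\Bigl|\sum_{i,j=1}^m\alpha_i\,\mathbb{E}\int_0^T\langle K_i^2 z_j[v_j],z_i[v_i]\rangle_{\mathcal{H}}\,dt\Bigr|\le C_0\,|v|_{\mathcal{V}_T}^2,
\end{equation*}
where $C_0$ depends only on $m$, the $\alpha_i$, the $|K_i|$, and the constant $C$, and crucially \emph{not} on the $\beta_i$. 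Hence $a(v,v)\ge\bigl(r_0\min_i\beta_i-C_0\bigr)|v|_{\mathcal{V}_T}^2$, and choosing $\overline{\beta}>\max\{1,C_0/r_0\}$ makes $a$ coercive whenever $\beta_i\ge\overline{\beta}$ for all $i$. Lax--Milgram then yields a unique $v^*\in\mathcal{V}_T$ solving the variational problem, i.e.\ the unique Nash equilibrium. Since $C_0$ and $r_0$ are independent of $(u_1,u_2)$, the leader controls entering only through $\bar y$ and hence only through $\ell$, the same $\overline{\beta}$ works for every $(u_1,u_2)\in\mathcal{U}_T$, as claimed. The main obstacle is isolating the $\beta$-independent bound $C_0$ on the non-sign-definite cross terms $i\ne j$, which is exactly what forces the largeness condition on $\overline{\beta}$.
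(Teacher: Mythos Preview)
Your proposal is correct and follows essentially the same route as the paper: the decomposition $y=\bar y+\sum_j z_j[v_j]$ is exactly the paper's $y=q+\sum_j\Lambda_j(v_j)$, and your bilinear form $a(\cdot,\cdot)$ coincides with the paper's $\langle\mathcal{M}\cdot,\cdot\rangle_{\mathcal{V}_T}$, after which both arguments bound the cross terms by a $\beta$-independent constant and invoke Lax--Milgram once $\overline{\beta}$ is taken large enough. The only (minor) addition in your write-up is the explicit mention of convexity of $J_i^F$ in $v_i$ to justify that the first-order condition is equivalent to the Nash property, which the paper uses implicitly via its earlier remark.
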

\begin{proof}
Consider the linear bounded operators \( \Lambda_i \in \mathcal{L}(L^2_\mathcal{F}(0,T;\mathcal{V}_i); L_\mathcal{F}^2(0,T; \mathcal{H})) \) defined by
\begin{align*}
\Lambda_i(v_i) = y_i, \quad i = 1, 2, \dots, m,
\end{align*}
where \( y_i \) is the solution of the following forward equation
\begin{equation}\label{1.10}
\begin{cases}
\begin{array}{l}
dy_i = (\mathcal{A}y_i + F(t)y_i + B_iv_i)dt + G(t)y_i \, dW(t), \quad \textnormal{in} \, (0,T], \\
y_i(0) = 0,\quad i = 1, 2, \dots, m.
\end{array}
\end{cases}
\end{equation}
It is straightforward to observe that the solution \( y \) of the system \eqref{asteqq1.1} can be expressed as
\begin{align}\label{equofy}
y = \sum_{i=1}^m \Lambda_i(v_i) + q,
\end{align}
where \( q \) is the solution of the forward equation
\begin{equation}\label{1.132}
\begin{cases}
\begin{array}{l}
dq = (\mathcal{A}q + F(t)q + D_1u_1)dt + (G(t)q + D_2u_2) \, dW(t), \quad \textnormal{in} \, (0,T], \\
q(0) = y_0.
\end{array}
\end{cases}
\end{equation}
For every \( (u_1, u_2) \in \mathcal{U}_T \), it is easy to verify that for any \( v_i \in L_\mathcal{F}^2(0,T; \mathcal{V}_i) \),
\begin{align*}
\left\langle \frac{\partial J_i^F}{\partial v_i}(u_1, u_2; v^*_1, \dots,v^*_m), v_i \right\rangle_{L_\mathcal{F}^2(0,T; \mathcal{V}_i)} 
&= \alpha_i \left\langle K_i^2\left(\sum_{j=1}^m \Lambda_j(v^*_j) + q - y_{i,d}\right), \Lambda_i(v_i) \right\rangle_{L_\mathcal{F}^2(0,T; \mathcal{H})} \\
&\quad + \beta_i \left\langle R_i v^*_i, v_i \right\rangle_{L_\mathcal{F}^2(0,T; \mathcal{V}_i)}, \quad i = 1, 2, \dots, m.
\end{align*}
Thus, \( (v^*_1, \dots, v^*_m) \) is a Nash equilibrium for \( (J_1^F,\dots,J_m^F) \) if and only if
\begin{align}\label{4.3nashcara}
\alpha_i \left\langle K_i^2\left(\sum_{j=1}^m \Lambda_j(v^*_j) + q - y_{i,d}\right), \Lambda_i(v_i) \right\rangle_{L_\mathcal{F}^2(0,T; \mathcal{H})} + \beta_i \left\langle R_i v^*_i, v_i \right\rangle_{L_\mathcal{F}^2(0,T; \mathcal{V}_i)} = 0, \quad \forall v_i \in L_\mathcal{F}^2(0,T; \mathcal{V}_i).
\end{align}
Equivalently, we have
\begin{align*}
\alpha_i \left\langle \Lambda_i^* K_i^2\left(\sum_{j=1}^m \Lambda_j(v^*_j) + q - y_{i,d}\right), v_i \right\rangle_{L_\mathcal{F}^2(0,T; \mathcal{V}_i)} + \beta_i \left\langle R_i v^*_i, v_i \right\rangle_{L_\mathcal{F}^2(0,T; \mathcal{V}_i)} = 0, \quad \forall v_i \in L_\mathcal{F}^2(0,T; \mathcal{V}_i),
\end{align*}
which leads to
\[
\alpha_i \Lambda_i^* K_i^2 \left[ \sum_{j=1}^m \Lambda_j(v^*_j) \right] + \beta_i R_i v^*_i = \alpha_i \Lambda_i^* K_i^2 (y_{i,d} - q) \quad \text{in} \;\; L_\mathcal{F}^2(0,T; \mathcal{V}_i), \quad i = 1, 2, \dots, m,
\]
where \( \Lambda_i^* \in \mathcal{L}(L_\mathcal{F}^2(0,T; \mathcal{H}); L_\mathcal{F}^2(0,T; \mathcal{V}_i)) \) is the adjoint operator of \( \Lambda_i \). The problem now reduces to proving the existence and uniqueness of \( (v_1^*, \dots, v_m^*) \in \mathcal{V}_T \) such that
\begin{align}\label{oureqfornash}
\mathcal{M}(v_1^*, \dots, v_m^*) = (\alpha_1 \Lambda_1^* K_1^2(y_{1,d} - q), \dots, \alpha_m \Lambda_m^* K_m^2(y_{m,d} - q)),
\end{align}
where \( \mathcal{M} : \mathcal{V}_T \to \mathcal{V}_T \) is the bounded operator defined by
\[
\mathcal{M}(v_1, \dots, v_m) = \left( \alpha_1 \Lambda_1^* K_1^2 \left[ \sum_{j=1}^m \Lambda_j(v_j) \right] + \beta_1 R_1 v_1, \dots, \alpha_m \Lambda_m^* K_m^2 \left[ \sum_{j=1}^m \Lambda_j(v_j) \right] + \beta_m R_m v_m \right).
\]
It is easy to verify that for any \( (v_1, \dots, v_m) \in \mathcal{V}_T \), we have
\[
\langle \mathcal{M}(v_1, \dots, v_m), (v_1, \dots, v_m) \rangle_{\mathcal{V}_T} = \sum_{i=1}^m \beta_i \langle R_i v_i, v_i \rangle_{L_\mathcal{F}^2(0,T; \mathcal{V}_i)} + \sum_{i,j=1}^m \alpha_i \langle K_i^2 \Lambda_j(v_j), \Lambda_i(v_i) \rangle_{L_\mathcal{F}^2(0,T; \mathcal{H})}.
\]
Recalling \eqref{asumRi}, it follows that
\begin{align}\label{estimonL}
\langle \mathcal{M}(v_1,\dots,v_m), (v_1,\dots,v_m) \rangle_{\mathcal{V}_T} \geq \sum_{i=1}^m (\beta_ir_0 - C) |v_i|_{L_\mathcal{F}^2(0,T; \mathcal{V}_i)}^2.
\end{align}
By selecting a sufficiently large \( \overline{\beta} \geq 1 \) in \eqref{estimonL}, it follows that for any \( \beta_i \geq \overline{\beta} \), \( i = 1, 2, \dots, m \), we obtain
\begin{align}\label{estimonLsecond}
\langle \mathcal{M}(v_1, \dots, v_m), (v_1, \dots, v_m) \rangle_{\mathcal{V}_T} \geq C |(v_1, \dots, v_m)|_{\mathcal{V}_T}^2.
\end{align}
Next, we introduce the bilinear continuous functional \( \mathbf{a}: \mathcal{V}_T \times \mathcal{V}_T \to \mathbb{R} \) defined as
\[
\mathbf{a}((v_1, \dots, v_m), (\widetilde{v}_1, \dots, \widetilde{v}_m)) = \left\langle \mathcal{M}(v_1, \dots, v_m), (\widetilde{v}_1, \dots, \widetilde{v}_m) \right\rangle_{\mathcal{V}_T},
\]
for any \( \big((v_1, \dots, v_m), (\widetilde{v}_1, \dots, \widetilde{v}_m)\big) \in \mathcal{V}_T \times \mathcal{V}_T \). We also define the linear continuous functional \( \Psi: \mathcal{V}_T \to \mathbb{R} \) by
\[
\Psi(v_1, \dots, v_m) = \left\langle (v_1, \dots, v_m), \left( \alpha_1 \Lambda_1^* K_1^2(y_{1,d} - q), \dots, \alpha_m \Lambda_m^* K_m^2(y_{m,d} - q) \right) \right\rangle_{\mathcal{V}_T},
\]
for any \( (v_1, \dots, v_m) \in \mathcal{V}_T \). From \eqref{estimonLsecond}, it is clear that the bilinear functional \( \mathbf{a} \) is coercive. Therefore, by the Lax-Milgram theorem, there exists a unique \( (v^*_1, \dots, v^*_m) \in \mathcal{V}_T \) such that
\[
\mathbf{a}((v^*_1, \dots, v^*_m), (v_1, \dots, v_m)) = \Psi(v_1, \dots, v_m) \quad \text{for all} \quad (v_1, \dots, v_m) \in \mathcal{V}_T.
\]
Thus, the solution \( (v^*_1, \dots, v^*_m) \) satisfies \eqref{oureqfornash}, and hence it represents the desired Nash equilibrium for the functionals \( (J_1^F,\dots,J_m^F) \). This completes the proof of Proposition \ref{propp4.1}.
\end{proof}

To characterize the Nash equilibrium \( (v^*_1, \dots, v^*_m) \), we introduce the following backward system
\begin{equation}\label{backadjforjiF}
\begin{cases}
\begin{array}{l}
dz_i = (-\mathcal{A}^*z_i - F(t)^*z_i - G(t)^*Z_i - \alpha_i K_i^2(y - y_{i,d}))dt + Z_i dW(t)\quad \text{in} \,\,[0,T),\\
z_i(T) = 0, \quad i = 1, 2, \dots, m,
\end{array}
\end{cases}
\end{equation}
where \( y = y(u_1, u_2; v_1, \dots, v_m) \) is the solution of \eqref{asteqq1.1}. Applying Itô's formula to the equations \eqref{1.10} and \eqref{backadjforjiF}, we find that
\begin{equation}\label{equa3.6}
\alpha_i \left\langle K_i^2(y - y_{i,d}), \Lambda_i(v_i) \right\rangle_{L^2_{\mathcal{F}}(0,T; \mathcal{H})} = \left\langle z_i, B_iv_i \right\rangle_{L^2_{\mathcal{F}}(0,T; \mathcal{H})}, \quad i = 1, 2, \dots, m.
\end{equation}
By combining \eqref{equa3.6}, \eqref{4.3nashcara}, and \eqref{equofy}, we conclude that \( (v^*_1, \dots, v^*_m) \) is a Nash equilibrium for the functionals \( (J_1^F,\dots,J_m^F) \) if and only if the following holds:
\[
\left\langle z_i, B_iv_i \right\rangle_{L^2_{\mathcal{F}}(0,T; \mathcal{H})} + \beta_i \left\langle R_iv^*_i, v_i \right\rangle_{L^2_{\mathcal{F}}(0,T; \mathcal{V}_i)} = 0 \quad \text{for all} \quad v_i \in L^2_{\mathcal{F}}(0,T; \mathcal{V}_i), \quad i = 1, 2, \dots, m.
\]
This gives the following formula for the Nash equilibrium:
\begin{equation}\label{charaofvi}
v^*_i = -\frac{1}{\beta_i} R_i^{-1} B_i^* z_i, \quad i = 1, 2, \dots, m.
\end{equation}

\subsection{For the backward system \eqref{asteqq1.1back}}
In this subsection, we prove the existence and uniqueness of the Nash equilibrium for the functionals \( (J_1^B, \dots, J_m^B) \) and provide its characterization. The proof follows the same approach as in Proposition \ref{propp4.1}, and thus we omit some details for simplicity.
\begin{prop}\label{propp4.1back}
There exists a sufficiently large constant \( \overline{\beta} \geq 1 \) such that, if \( \beta_i \geq \overline{\beta} \) for \( i = 1, 2, \dots, m \), then for each pair \( (u_1, u_2) \in \mathcal{U}_T \), there exists a unique Nash equilibrium \( (v_1^*(u_1, u_2), \dots, v_m^*(u_1, u_2)) \in \mathcal{V}_T \) for the functionals \( (J_1^B, \dots, J_m^B) \) associated with \( (u_1, u_2) \).
\end{prop}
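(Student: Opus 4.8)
The plan is to reproduce, essentially verbatim, the Lax--Milgram argument of Proposition \ref{propp4.1}, the only structural change being that the backward system \eqref{asteqq1.1back} carries a \emph{pair} $(y,Y)$ as its state and that the secondary cost $J_i^B$ in \eqref{functji1secback} penalizes \emph{both} components. Accordingly, for each $i=1,\dots,m$ I would first introduce the bounded linear solution operator
\begin{align*}
\Lambda_i\colon L^2_{\mathcal{F}}(0,T;\mathcal{V}_i)\longrightarrow L^2_{\mathcal{F}}(0,T;\mathcal{H})\times L^2_{\mathcal{F}}(0,T;\mathcal{L}^0_2),\qquad \Lambda_i(v_i)=(\Lambda_i^y(v_i),\Lambda_i^Y(v_i))=(y_i,Y_i),
\end{align*}
where $(y_i,Y_i)$ solves the backward equation $dy_i=(-\mathcal{A}y_i+F(t)y_i+G(t)Y_i+B_iv_i)\,dt+Y_i\,dW(t)$ in $[0,T)$ with $y_i(T)=0$. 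Its boundedness into the product space is exactly the well-posedness estimate for \eqref{asteqq1.1back} recalled in Section \ref{sec2}. By linearity the full state decomposes as $y=\sum_{i=1}^m\Lambda_i^y(v_i)+q$ and $Y=\sum_{i=1}^m\Lambda_i^Y(v_i)+Q$, where $(q,Q)$ solves \eqref{asteqq1.1back} driven by the leaders $(u_1,u_2)$ and the terminal datum $y_T$ alone.

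Next I would compute $\partial J_i^B/\partial v_i$. Since $K_i$ and $\widetilde{K}_i$ are self-adjoint, the two tracking terms contribute through the adjoints $(\Lambda_i^y)^*$ and $(\Lambda_i^Y)^*$, and the vanishing of the derivative becomes, after substituting the decomposition of $(y,Y)$,
\begin{align*}
&\alpha_i(\Lambda_i^y)^*K_i^2\Big[\sum_{j=1}^m\Lambda_j^y(v_j^*)\Big]+\widetilde{\alpha}_i(\Lambda_i^Y)^*\widetilde{K}_i^2\Big[\sum_{j=1}^m\Lambda_j^Y(v_j^*)\Big]+\beta_iR_iv_i^*\\
&\qquad=\alpha_i(\Lambda_i^y)^*K_i^2(y_{i,d}-q)+\widetilde{\alpha}_i(\Lambda_i^Y)^*\widetilde{K}_i^2(Y_{i,d}-Q),\quad i=1,\dots,m.
\end{align*}
This is again an equation $\mathcal{M}(v_1^*,\dots,v_m^*)=(\text{data})$ for a bounded operator $\mathcal{M}\colon\mathcal{V}_T\to\mathcal{V}_T$, now carrying \emph{two} double-sum coupling terms instead of one.

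The coercivity step then proceeds as in Proposition \ref{propp4.1}: testing $\mathcal{M}$ against $(v_1,\dots,v_m)$ gives
\begin{align*}
\langle\mathcal{M}(v_1,\dots,v_m),(v_1,\dots,v_m)\rangle_{\mathcal{V}_T}={}&\sum_{i=1}^m\beta_i\langle R_iv_i,v_i\rangle_{L^2_{\mathcal{F}}(0,T;\mathcal{V}_i)}\\
&+\sum_{i,j=1}^m\alpha_i\langle K_i^2\Lambda_j^y(v_j),\Lambda_i^y(v_i)\rangle_{L^2_{\mathcal{F}}(0,T;\mathcal{H})}+\sum_{i,j=1}^m\widetilde{\alpha}_i\langle\widetilde{K}_i^2\Lambda_j^Y(v_j),\Lambda_i^Y(v_i)\rangle_{L^2_{\mathcal{F}}(0,T;\mathcal{L}^0_2)}.
\end{align*}
Applying \eqref{asumRi} to the first sum and bounding the two double sums by $C|(v_1,\dots,v_m)|_{\mathcal{V}_T}^2$ (via the uniform boundedness of $\Lambda_i^y$, $\Lambda_i^Y$, $K_i$, $\widetilde{K}_i$) yields $\langle\mathcal{M}v,v\rangle_{\mathcal{V}_T}\geq\sum_{i=1}^m(\beta_ir_0-C)|v_i|_{L^2_{\mathcal{F}}(0,T;\mathcal{V}_i)}^2$, so that for $\beta_i\geq\overline{\beta}$ with $\overline{\beta}\geq1$ large enough the associated bilinear form is coercive. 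The Lax--Milgram theorem then delivers the unique Nash equilibrium $(v_1^*,\dots,v_m^*)\in\mathcal{V}_T$.

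The main obstacle I anticipate is bookkeeping rather than conceptual: the diffusion component $Y$ forces $\Lambda_i$ to be handled as a two-component operator, and one must check that the new $\widetilde{\alpha}_i$-term is still absorbed into the $\beta_ir_0$ coercivity threshold — this hinges precisely on the \emph{uniform} boundedness of both $\Lambda_i^y$ and $\Lambda_i^Y$, which the well-posedness estimate for \eqref{asteqq1.1back} supplies. Once this is secured the argument is identical to the forward case, and the analogue of the characterization \eqref{charaofvi} would follow by applying It\^o's formula between the $(y_i,Y_i)$-equation and a suitable \emph{forward} adjoint system, the forward--backward roles being interchanged relative to the forward case.
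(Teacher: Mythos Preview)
Your proposal is correct and follows essentially the same approach as the paper: you introduce the two-component solution operator $\Lambda_i=(\Lambda_i^y,\Lambda_i^Y)$ (the paper writes $(\Lambda_i^1,\Lambda_i^2)$), decompose $(y,Y)$ accordingly, derive the operator equation $\mathcal{M}(v^*)=\text{data}$, and conclude by Lax--Milgram via the coercivity estimate $\langle\mathcal{M}v,v\rangle_{\mathcal{V}_T}\geq\sum_i(\beta_ir_0-C)|v_i|^2$. In fact you spell out the coercivity step more explicitly than the paper, which simply refers back to Proposition~\ref{propp4.1} at that point.
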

\begin{proof}
Define the linear bounded operators \( \Lambda_i \in \mathcal{L}\left(L^2_\mathcal{F}(0,T;\mathcal{V}_i); L_\mathcal{F}^2(0,T; \mathcal{H}) \times L_\mathcal{F}^2(0,T; \mathcal{L}_2^0)\right) \) by
\begin{align*}
\Lambda_i(v_i) = (\Lambda_i^1(v_i), \Lambda_i^2(v_i)) = (y_i, Y_i), \quad i = 1, 2, \dots, m,
\end{align*}
where \( (y_i, Y_i) \) represents the solution of the following backward equation
\begin{equation}\label{1.10back}
\begin{cases}
\begin{array}{l}
dy_i = \left[-\mathcal{A}y_i + F(t)y_i + G(t)Y_i + B_iv_i\right] dt + Y_i \, dW(t), \quad \textnormal{in} \, [0,T), \\
y_i(T) = 0, \quad i = 1, 2, \dots, m.
\end{array}
\end{cases}
\end{equation}
Note that the solution \( (y, Y) \) of the system \eqref{asteqq1.1back} can be written as
\begin{align}\label{equofyback}
(y, Y) = \sum_{i=1}^m \Lambda_i(v_i) + (r, R),
\end{align}
where \( (r, R) \) is the solution of the backward equation
\begin{equation}\label{1.132back}
\begin{cases}
\begin{array}{l}
dr = \left[-\mathcal{A}r + F(t)r + G(t)R + D_1 u_1\right] dt + \left[ R + D_2 u_2 \right] dW(t), \quad \textnormal{in} \, [0,T), \\
r(T) = y_T.
\end{array}
\end{cases}
\end{equation}
For every \( (u_1, u_2) \in \mathcal{U}_T \), it is straightforward to verify that for any \( v_i \in L_\mathcal{F}^2(0,T; \mathcal{V}_i) \),
\begin{align*}
\left\langle \frac{\partial J_i^B}{\partial v_i}(u_1, u_2; v^*_1, \dots,v^*_m), v_i \right\rangle_{L_\mathcal{F}^2(0,T; \mathcal{V}_i)} &= \alpha_i \left\langle K_i^2\left(\sum_{j=1}^m \Lambda_j^1(v^*_j) + r - y_{i,d}\right), \Lambda_i^1(v_i) \right\rangle_{L_\mathcal{F}^2(0,T; \mathcal{H})} \\
&\quad+ \widetilde{\alpha}_i \left\langle \widetilde{K}_i^2\left(\sum_{j=1}^m \Lambda_j^2(v^*_j) + R - Y_{i,d}\right), \Lambda_i^2(v_i) \right\rangle_{L_\mathcal{F}^2(0,T; \mathcal{L}_2^0)} \\
&\quad+ \beta_i \left\langle R_i v^*_i, v_i \right\rangle_{L_\mathcal{F}^2(0,T; \mathcal{V}_i)}, \quad i = 1, 2, \dots, m.
\end{align*}
Thus, \( (v^*_1, \dots,v^*_m) \) is a Nash equilibrium for \( (J_1^B,\dots,J_m^B) \) if and only if for all \( v_i \in L_\mathcal{F}^2(0,T; \mathcal{V}_i) \),
\begin{align}\label{4.3nashcarback}
\begin{aligned}
&\alpha_i \left\langle K_i^2\left(\sum_{j=1}^m \Lambda_j^1(v^*_j) + r - y_{i,d}\right), \Lambda_i^1(v_i) \right\rangle_{L_\mathcal{F}^2(0,T; \mathcal{H})} \\
&\quad + \widetilde{\alpha}_i \left\langle \widetilde{K}_i^2\left(\sum_{j=1}^m \Lambda_j^2(v^*_j) + R - Y_{i,d}\right), \Lambda_i^2(v_i) \right\rangle_{L_\mathcal{F}^2(0,T; \mathcal{L}_2^0)} \\
&\quad + \beta_i \left\langle R_i v^*_i, v_i \right\rangle_{L_\mathcal{F}^2(0,T; \mathcal{V}_i)} = 0,
\end{aligned}
\end{align}
which implies that
\[
\Lambda_i^*\begin{bmatrix} \alpha_i K_i^2 \sum_{j=1}^m \Lambda_j^1(v^*_j) \\ \widetilde{\alpha}_i \widetilde{K}_i^2 \sum_{j=1}^m \Lambda_j^2(v^*_j) \end{bmatrix} + \beta_i R_i v^*_i = \Lambda_i^* \begin{bmatrix} \alpha_i K_i^2(y_{i,d} - r) \\ \widetilde{\alpha}_i \widetilde{K}_i^2(Y_{i,d} - R) \end{bmatrix} \quad \text{in} \;\; L_\mathcal{F}^2(0,T; \mathcal{V}_i), \quad i = 1, 2, \dots, m,
\]
where \( \Lambda_i^* \in \mathcal{L}\left(L_\mathcal{F}^2(0,T; \mathcal{H}) \times L_\mathcal{F}^2(0,T; \mathcal{L}_2^0); L_\mathcal{F}^2(0,T; \mathcal{V}_i)\right) \) is the adjoint operator of \( \Lambda_i \). The problem is now reduced to proving that there exists a unique \( (v_1^*, \dots,v_m^*) \in \mathcal{V}_T \) such that
\begin{align}\label{oureqfornashback}
\mathcal{M}(v_1^*, \dots,v_m^*) = \left(\Lambda_1^* \begin{bmatrix} \alpha_1 K_1^2(y_{1,d} - r) \\ \widetilde{\alpha}_1 \widetilde{K}_1^2(Y_{1,d} - R) \end{bmatrix}, \dots, \Lambda_m^* \begin{bmatrix} \alpha_m K_m^2(y_{m,d} - r) \\ \widetilde{\alpha}_m \widetilde{K}_m^2(Y_{m,d} - R) \end{bmatrix}\right),
\end{align}
where \( \mathcal{M} : \mathcal{V}_T \to \mathcal{V}_T \) is the bounded operator defined by
\[
\mathcal{M}(v_1, \dots,v_m) = \left(\Lambda_1^*\begin{bmatrix} \alpha_1 K_1^2 \sum_{j=1}^m \Lambda_j^1(v_j) \\ \widetilde{\alpha}_1 \widetilde{K}_1^2 \sum_{j=1}^m \Lambda_j^2(v_j) \end{bmatrix} + \beta_1 R_1 v_1, \dots, \Lambda_m^*\begin{bmatrix} \alpha_m K_m^2 \sum_{j=1}^m \Lambda_j^1(v_j) \\ \widetilde{\alpha}_m \widetilde{K}_m^2 \sum_{j=1}^m \Lambda_j^2(v_j) \end{bmatrix} + \beta_m R_m v_m \right).
\]
Similarly to the proof of Proposition \ref{propp4.1}, by applying the Lax-Milgram theorem, we establish the existence and uniqueness of the Nash equilibrium \( (v^*_1, \dots, v^*_m) \) for the functionals \( (J_1^B, \dots, J_m^B) \). This concludes the proof of Proposition \ref{propp4.1back}.
\end{proof}
Let us now provide the characterization of the Nash equilibrium \( (v^*_1, \dots,v^*_m) \) for the functionals \( (J_1^B,\dots,J_m^B) \). To this end, we introduce the following forward system
\begin{equation}\label{forwkadj}
\begin{cases}
\begin{array}{l}
dz_i = \left[\mathcal{A}^*z_i - F(t)^*z_i - \alpha_i K_i^2(y - y_{i,d})\right] dt + \left[-G(t)^*z_i - \widetilde{\alpha}_i \widetilde{K}_i^2(Y - Y_{i,d})\right] dW(t), \quad \textnormal{in} \, (0,T],\\
z_i(0) = 0, \quad i = 1, 2, \dots, m,
\end{array}
\end{cases}
\end{equation}
where \( (y, Y) = (y, Y)(u_1, u_2; v_1, \dots, v_m) \) represents the solution of \eqref{asteqq1.1back}. By applying Itô's formula to the systems \eqref{1.10back} and \eqref{forwkadj}, we derive the following relation for each \( i = 1, 2, \dots, m \):
\begin{equation}\label{equa3.6back}
\alpha_i \left\langle K_i^2(y - y_{i,d}), \Lambda_i^1(v_i) \right\rangle_{L^2_{\mathcal{F}}(0,T; \mathcal{H})} + \widetilde{\alpha}_i \left\langle \widetilde{K}_i^2(Y - Y_{i,d}), \Lambda_i^2(v_i) \right\rangle_{L^2_{\mathcal{F}}(0,T; \mathcal{L}^0_2)} = \left\langle z_i, B_i v_i \right\rangle_{L^2_{\mathcal{F}}(0,T; \mathcal{H})}.
\end{equation}
By combining \eqref{equa3.6back}, \eqref{4.3nashcarback}, and \eqref{equofyback}, we conclude that \( (v^*_1, \dots, v^*_m) \) forms a Nash equilibrium for the functionals \( (J_1^B, \dots, J_m^B) \) if and only if for all \( v_i \in L^2_{\mathcal{F}}(0,T; \mathcal{V}_i) \),
\[
\left\langle z_i, B_i v_i \right\rangle_{L^2_{\mathcal{F}}(0,T; \mathcal{H})} + \beta_i \left\langle R_i v^*_i, v_i \right\rangle_{L^2_{\mathcal{F}}(0,T; \mathcal{V}_i)} = 0 \quad \text{for all} \quad v_i \in L^2_{\mathcal{F}}(0,T; \mathcal{V}_i), \quad i = 1, 2, \dots, m.
\]
This leads to the following characterization of the Nash equilibrium for \( (J_1^B,\dots,J_m^B) \):
\begin{equation}\label{charaofviback}
v^*_i = -\frac{1}{\beta_i} R_i^{-1} B_i^* z_i, \quad i = 1, 2, \dots, m.
\end{equation}

\section{Controllability characterization}\label{sec04}
In this section, we reduce the controllability in the sense of the Stackelberg-Nash  for the equations \eqref{asteqq1.1} and \eqref{asteqq1.1back} to the classical controllability of a system of coupled stochastic evolution equations. Thus, the controllability will be characterized through the corresponding observability property.
\subsection{Stackelberg-Nash controllability for the equation \eqref{asteqq1.1}}\label{subsec3}
Using the characterization in \eqref{charaofvi}, the Stackelberg-Nash controllability problem for the system \eqref{asteqq1.1} (as defined in Definition \ref{deff2.1}) is reduced to studying the controllability of the following coupled forward-backward stochastic system
\begin{equation}\label{coeqq1.1abscon}
\begin{cases}
\begin{array}{l}
dy = \left[\mathcal{A}y+F(t)y+D_1u_1-\sum_{i=1}^m\frac{1}{\beta_i}B_iR_i^{-1}B_i^* z_i\right]dt + \left[G(t)y+D_2u_2\right] dW(t)\quad \textnormal{in} \,\,(0,T],\\
dz_i = \left[-\mathcal{A}^*z_i-F(t)^*z_i-G(t)^*Z_i-\alpha_i K_i^2(y-y_{i,d})\right]dt + Z_i dW(t)\quad \textnormal{in} \,\,[0,T),\\
y(0)=y_0, \quad z_i(T)=0,\quad i=1,2,\dots,m.
\end{array}
\end{cases}
\end{equation}
Hence, we present the following definition for the notions of controllability of \eqref{coeqq1.1abscon}.

\begin{df}
\begin{enumerate}[label=\textbf{\arabic*)}]
\item System \eqref{coeqq1.1abscon} is exactly  controllable at time $T$ if for any   $y_0\in L^2_{\mathcal{F}_0}(\Omega;\mathcal{H})$, $y_T\in L^2_{\mathcal{F}_T}(\Omega;\mathcal{H})$, and target functions $y_{i,d}\in L^{2,\rho}_{\mathcal{F}}(0,T;\mathcal{H})$, $(i=1,2,\dots,m)$, there exist leader controls  $(\widehat{u}_1,\widehat{u}_2)\in \mathcal{U}_T$  minimizing the functional  $J$ such that the corresponding state   $\widehat{y}$, given from the unique solution \( (\widehat{y};\widehat{z}_i,\widehat{Z}_i) \) of the system \eqref{coeqq1.1abscon} satisfies that
$\widehat{y}(T)=y_T,\;\mathbb{P}\textnormal{-a.s.}$ Moreover, the leaders $(\widehat{u}_1,\widehat{u}_2)$ can be chosen such that 
\begin{align}\label{estimm1}
\begin{aligned}
|\widehat{u}_1|^2_{L^2_\mathcal{F}(0,T;\mathcal{U}_1)} + |\widehat{u}_2|^2_{L^2_\mathcal{F}(0,T;\mathcal{U}_2)} \leq C \bigg( \mathbb{E} |y_0|^2_{\mathcal{H}} + \mathbb{E} |y_T|^2_{\mathcal{H}} + \sum_{i=1}^m \mathbb{E} \int_0^T  \rho^2|K_iy_{i,d}|_{\mathcal{H}}^2 \,dt \bigg),
\end{aligned}
\end{align}
for some constant $C>0$.
\item System \eqref{coeqq1.1abscon}  is null controllable at time $T$ if for every initial condition $y_0\in L^2_{\mathcal{F}_0}(\Omega;\mathcal{H})$, and target functions $y_{i,d}\in L^{2,\rho}_{\mathcal{F}}(0,T;\mathcal{H})$, $(i=1,2,\dots,m)$, there exist leader controls $(\widehat{u}_1,\widehat{u}_2)\in \mathcal{U}_T$  minimizing the functional  $J$ such that the corresponding state   $\widehat{y}$, given from the unique solution \( (\widehat{y};\widehat{z}_i,\widehat{Z}_i) \) of the system \eqref{coeqq1.1abscon} satisfies that  $\widehat{y}(T)=0,\;\mathbb{P}\textnormal{-a.s.}$ Moreover, the leaders $(\widehat{u}_1,\widehat{u}_2)$ can be chosen such that
\begin{align}\label{estimm2}
\begin{aligned}
|\widehat{u}_1|^2_{L^2_\mathcal{F}(0,T;\mathcal{U}_1)} + |\widehat{u}_2|^2_{L^2_\mathcal{F}(0,T;\mathcal{U}_2)} \leq C \bigg( \mathbb{E} |y_0|^2_{\mathcal{H}} + \sum_{i=1}^m \mathbb{E} \int_0^T  \rho^2|K_iy_{i,d}|_{\mathcal{H}}^2 \,dt \bigg),
\end{aligned}
\end{align}
for some constant $C>0$.
\item System \eqref{coeqq1.1abscon} is approximately  controllable at time $T$ if for any   $y_0\in L^2_{\mathcal{F}_0}(\Omega;\mathcal{H})$, $y_T\in L^2_{\mathcal{F}_T}(\Omega;\mathcal{H})$, $\varepsilon>0$  and target functions $y_{i,d}\in L^{2}_{\mathcal{F}}(0,T;\mathcal{H})$, $(i=1,2,\dots,m)$, there exist  leader controls  $(\widehat{u}_1,\widehat{u}_2)\in \mathcal{U}_T$  minimizing the functional  $J$  such that the corresponding state   $\widehat{y}$, given from the unique solution \( (\widehat{y};\widehat{z}_i,\widehat{Z}_i) \) of the system \eqref{coeqq1.1abscon} satisfies that
$\mathbb{E}|\widehat{y}(T)-y_T|_{\mathcal{H}}^2\leq \varepsilon.$
\end{enumerate}
\end{df}
By classical duality arguments, the controllability of \eqref{coeqq1.1abscon} can be reformulated as an observability problem for the following adjoint backward-forward system
\begin{equation}\label{coeqq1.adjoint}
\begin{cases}
\begin{array}{l}
d\phi = \left[-\mathcal{A}^*\phi-F(t)^*\phi-G(t)^*\Phi+\sum_{i=1}^m\alpha_i K_i^2\psi_i\right]dt + \Phi dW(t)\quad \textnormal{in} \,\,[0,T),\\
d\psi_i = \left[\mathcal{A}\psi_i+F(t)\psi_i+\frac{1}{\beta_i}B_iR_i^{-1}B^*_i\phi\right]dt + G(t)\psi_i dW(t)\quad \textnormal{in} \,\,(0,T],\\
\phi(T)=\phi_T, \quad \psi_i(0)=0,\quad i=1,2,\dots,m.
\end{array}
\end{cases}
\end{equation}

We first prove the following duality relation between the solutions of \eqref{coeqq1.1abscon} and \eqref{coeqq1.adjoint}.
\begin{prop}\label{propp4.1dual}
The solutions of \eqref{coeqq1.1abscon} and \eqref{coeqq1.adjoint} satisfy that, for all \( y_0 \in L^2_{\mathcal{F}_0}(\Omega; \mathcal{H}) \) and \( \phi_T \in L^2_{\mathcal{F}_T}(\Omega; \mathcal{H}) \), 
\begin{align}\label{dulirelat}
    \begin{aligned}
\mathbb{E}\langle y(T),\phi_T\rangle_{\mathcal{H}}-\mathbb{E}\langle y_0,\phi(0)\rangle_{\mathcal{H}}=&\,\mathbb{E}\int_0^T \langle u_1,D_1^*\phi\rangle_{\mathcal{U}_1} dt+\mathbb{E}\int_0^T \langle u_2,D_2^*\Phi\rangle_{\mathcal{U}_2} dt\\
&+\sum_{i=1}^m\alpha_i\mathbb{E}\int_0^T \langle y_{i,d},K_i^2\psi_i\rangle_{\mathcal{H}} dt.
\end{aligned}
\end{align}
\end{prop}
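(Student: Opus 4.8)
The plan is to establish the duality identity \eqref{dulirelat} by applying Itô's formula to the pairing $\langle y, \phi\rangle_{\mathcal{H}}$ and to each pairing $\langle z_i, \psi_i\rangle_{\mathcal{H}}$, then summing the resulting identities so that the coupling terms cancel. The system \eqref{coeqq1.1abscon} couples the forward state $y$ to the backward adjoints $z_i$ (via the $K_i^2$-term), while the observability system \eqref{coeqq1.adjoint} couples the backward $\phi$ to the forward $\psi_i$ (via the $B_iR_i^{-1}B_i^*$-term). The key structural observation is that these two couplings are cross-adjoint: the operator coupling $y$ into the $z_i$-equation is $\alpha_iK_i^2$, which reappears in the $\phi$-equation, and the operator $\frac{1}{\beta_i}B_iR_i^{-1}B_i^*$ coupling $z_i$ into the $y$-equation is exactly the one coupling $\phi$ into the $\psi_i$-equation. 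This is what makes the cross terms cancel upon summation.

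Concretely, first I would apply the Itô product formula to $d\langle y,\phi\rangle_{\mathcal{H}}$ on $[0,T]$. Since $y$ is forward with $y(0)=y_0$ and $\phi$ is backward with $\phi(T)=\phi_T$, integrating gives
\begin{align*}
\mathbb{E}\langle y(T),\phi_T\rangle_{\mathcal{H}}-\mathbb{E}\langle y_0,\phi(0)\rangle_{\mathcal{H}}
=\mathbb{E}\int_0^T\Big(\langle dy\text{-drift},\phi\rangle+\langle y,d\phi\text{-drift}\rangle+\langle \text{diffusions}\rangle\Big)\,dt.
\end{align*}
Expanding the drifts, the terms $\langle \mathcal{A}y,\phi\rangle+\langle y,-\mathcal{A}^*\phi\rangle$ cancel (moving $\mathcal{A}^*$ back onto $y$), and likewise $\langle F(t)y,\phi\rangle+\langle y,-F(t)^*\phi\rangle$ cancel; the diffusion cross term $\langle G(t)y,\Phi\rangle_{\mathcal{L}^0_2}$ pairs against $\langle y,-G(t)^*\Phi\rangle$ and cancels. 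What survives are the control term $\langle D_1u_1,\phi\rangle=\langle u_1,D_1^*\phi\rangle_{\mathcal{U}_1}$, the diffusion control term $\langle D_2u_2,\Phi\rangle_{\mathcal{L}^0_2}=\langle u_2,D_2^*\Phi\rangle_{\mathcal{U}_2}$, the coupling term $-\sum_i\frac{1}{\beta_i}\langle B_iR_i^{-1}B_i^*z_i,\phi\rangle$, and the term $\sum_i\alpha_i\langle y,K_i^2\psi_i\rangle$. Second, I would apply Itô to each $d\langle z_i,\psi_i\rangle_{\mathcal{H}}$ on $[0,T]$; here $z_i(T)=0$ and $\psi_i(0)=0$, so the boundary contribution vanishes entirely. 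After the $\mathcal{A}^*,F^*,G^*$ terms cancel in the same adjoint manner, the surviving terms are $-\alpha_i\langle K_i^2(y-y_{i,d}),\psi_i\rangle$ from the $z_i$-drift and $\frac{1}{\beta_i}\langle z_i,B_iR_i^{-1}B_i^*\phi\rangle$ from the $\psi_i$-drift.

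Finally I would sum the $z_i$--$\psi_i$ identities over $i$ and add the result to the $y$--$\phi$ identity. The term $\sum_i\alpha_i\langle y,K_i^2\psi_i\rangle$ cancels against $-\sum_i\alpha_i\langle K_i^2y,\psi_i\rangle$ (using self-adjointness $K_i\in\mathcal{S}(\mathcal{H})$), and $-\sum_i\frac{1}{\beta_i}\langle B_iR_i^{-1}B_i^*z_i,\phi\rangle$ cancels against $\sum_i\frac{1}{\beta_i}\langle z_i,B_iR_i^{-1}B_i^*\phi\rangle$ (using self-adjointness of $R_i^{-1}$, so $(B_iR_i^{-1}B_i^*)^*=B_iR_i^{-1}B_i^*$). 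The only residual from the coupling is the inhomogeneous piece $+\sum_i\alpha_i\langle K_i^2y_{i,d},\psi_i\rangle=\sum_i\alpha_i\langle y_{i,d},K_i^2\psi_i\rangle$, which is exactly the last term in \eqref{dulirelat}. I expect the main technical obstacle to be the rigorous justification of Itô's product formula in this abstract mild-solution setting, since $\mathcal{A}$ is only a $C_0$-semigroup generator and the solutions are mild rather than strong; this is handled by the standard approximation argument (e.g. via Yosida approximations $\mathcal{A}_n$ and the corresponding regularized solutions, passing to the limit using the well-posedness estimates recorded after \eqref{asteqq1.1back}), which I would invoke rather than reproduce, citing \cite[Chapter 4]{lu2021mathematical}.
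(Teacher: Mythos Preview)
Your proposal is correct and follows essentially the same approach as the paper: apply It\^o's formula to $d\langle y,\phi\rangle_{\mathcal{H}}$ and to $\sum_{i=1}^m d\langle z_i,\psi_i\rangle_{\mathcal{H}}$, then combine the two resulting identities so that the coupling terms cancel. Your write-up is in fact more detailed than the paper's, which simply records the two intermediate identities and adds them without spelling out the cancellations or the mild-solution justification.
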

\begin{proof}
Using Itô's formula for \( d\langle y, \phi \rangle_{\mathcal{H}} \), we integrate the resulting equality over \( (0,T) \) and take the expectation on both sides. This yields
\begin{align}\label{eqqq11}
\begin{aligned}
\mathbb{E}\langle y(T), \phi_T \rangle_{\mathcal{H}} - \mathbb{E}\langle y_0, \phi(0) \rangle_{\mathcal{H}} &= \mathbb{E}\int_0^T \langle u_1, D_1^* \phi \rangle_{\mathcal{U}_1} \, dt + \mathbb{E}\int_0^T \langle u_2, D_2^* \Phi \rangle_{\mathcal{U}_2} \, dt \\
& \quad - \sum_{i=1}^m \frac{1}{\beta_i} \mathbb{E} \int_0^T \langle z_i, B_i R_i^{-1} B_i^* \phi \rangle_{\mathcal{H}} \, dt + \sum_{i=1}^m \alpha_i \mathbb{E} \int_0^T \langle K_i^2 y, \psi_i \rangle_{\mathcal{H}} \, dt.
\end{aligned}
\end{align}
Similarly, applying Itô's formula to \( \sum_{i=1}^m d\langle z_i, \psi_i \rangle_{\mathcal{H}} \), we obtain 
\begin{align}\label{eqqq33}
\begin{aligned}
0 &= - \sum_{i=1}^m \alpha_i \mathbb{E} \int_0^T \langle K_i^2 y, \psi_i \rangle_{\mathcal{H}} \, dt + \sum_{i=1}^m \alpha_i \mathbb{E} \int_0^T \langle K_i^2 y_{i,d}, \psi_i \rangle_{\mathcal{H}} \, dt \\
& \quad + \sum_{i=1}^m \frac{1}{\beta_i} \mathbb{E} \int_0^T \langle z_i, B_i R_i^{-1} B_i^* \phi \rangle_{\mathcal{H}} \, dt.
\end{aligned}
\end{align}
Combining \eqref{eqqq11} and \eqref{eqqq33}, we deduce the identity \eqref{dulirelat}.
\end{proof}
First, we have the following characterization result for the exact controllability of \eqref{coeqq1.1abscon}.
\begin{thm}\label{thmm4.1ex}
System \eqref{coeqq1.1abscon} is exactly controllable at time \( T \) if and only if there exists a constant \( C > 0 \) such that the solutions of \eqref{coeqq1.adjoint} satisfy the following (exact) observability inequality:
\begin{align}\label{observaineqexa}
\begin{aligned}
\mathbb{E} |\phi_T|^2_{\mathcal{H}} + \sum_{i=1}^m \mathbb{E} \int_0^T \rho^{-2} |K_i \psi_i|_{\mathcal{H}}^2 \, dt \leq C \mathbb{E} \int_0^T \left( |D_1^* \phi|_{\mathcal{U}_1}^2 + |D_2^* \Phi|_{\mathcal{U}_2}^2 \right) \, dt.
\end{aligned}
\end{align}
\end{thm}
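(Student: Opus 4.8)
Since the statement is an equivalence, I would prove the two implications separately, both resting on the duality identity \eqref{dulirelat} of Proposition \ref{propp4.1dual}. Throughout I abbreviate the observation quantity by $I(\phi_T):=\mathbb E\int_0^T\big(|D_1^*\phi|_{\mathcal U_1}^2+|D_2^*\Phi|_{\mathcal U_2}^2\big)\,dt$, the right-hand side of \eqref{observaineqexa}, where $(\phi,\Phi,\psi_i)$ solves the adjoint system \eqref{coeqq1.adjoint} with terminal datum $\phi_T$.

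\emph{Observability $\Rightarrow$ exact controllability.} The plan is a Hilbert Uniqueness Method argument realized through a dual variational problem. With the data $y_0$, $y_T$, $y_{i,d}$ fixed, I would introduce the functional $\mathcal J$ on $L^2_{\mathcal F_T}(\Omega;\mathcal H)$ given by
\[
\mathcal J(\phi_T)=\tfrac12\,I(\phi_T)+\mathbb E\langle y_0,\phi(0)\rangle_{\mathcal H}-\mathbb E\langle y_T,\phi_T\rangle_{\mathcal H}+\sum_{i=1}^m\alpha_i\,\mathbb E\int_0^T\langle y_{i,d},K_i^2\psi_i\rangle_{\mathcal H}\,dt .
\]
Its quadratic part is exactly the observation term, while its linear part is estimated, after distributing one factor $K_i$ onto $y_{i,d}$ (using $K_i\in\mathcal S(\mathcal H)$) and splitting the weight $\rho$, by $C\,\mathcal D\,\mathcal N(\phi_T)$, where $\mathcal D:=\big(\mathbb E|y_0|_{\mathcal H}^2+\mathbb E|y_T|_{\mathcal H}^2+\sum_i\mathbb E\int_0^T\rho^2|K_iy_{i,d}|_{\mathcal H}^2\,dt\big)^{1/2}$ and $\mathcal N(\phi_T):=\big(\mathbb E|\phi_T|_{\mathcal H}^2+\sum_i\mathbb E\int_0^T\rho^{-2}|K_i\psi_i|_{\mathcal H}^2\,dt\big)^{1/2}$; the term $\mathbb E\langle y_0,\phi(0)\rangle$ is absorbed via the energy estimate $\mathbb E|\phi(0)|_{\mathcal H}^2\le C\,\mathbb E|\phi_T|_{\mathcal H}^2$ for \eqref{coeqq1.adjoint}. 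Invoking \eqref{observaineqexa} to bound $\mathcal N(\phi_T)\le C\,I(\phi_T)^{1/2}$, I obtain $\mathcal J(\phi_T)\ge\tfrac12 I(\phi_T)-C\mathcal D\,I(\phi_T)^{1/2}$, which together with the lower bound $I(\phi_T)\ge c\,\mathbb E|\phi_T|_{\mathcal H}^2$ (again \eqref{observaineqexa}) shows $\mathcal J$ is continuous, strictly convex and coercive on $L^2_{\mathcal F_T}(\Omega;\mathcal H)$. The direct method then gives a unique minimizer $\widehat\phi_T$. Setting $\widehat u_1=D_1^*\widehat\phi$, $\widehat u_2=D_2^*\widehat\Phi$, I would write the Euler–Lagrange equation $\mathcal J'(\widehat\phi_T)[\delta\phi_T]=0$ and substitute \eqref{dulirelat} (for the state driven by $(\widehat u_1,\widehat u_2)$ tested against the adjoint trajectory with terminal value $\delta\phi_T$); everything collapses to $\mathbb E\langle\widehat y(T)-y_T,\delta\phi_T\rangle_{\mathcal H}=0$ for all $\delta\phi_T$, i.e. $\widehat y(T)=y_T$, $\mathbb P$-a.s. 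These are the minimal-$L^2$-norm controls steering to $y_T$, hence they minimize $J$; and $\mathcal J(\widehat\phi_T)\le\mathcal J(0)=0$ forces $I(\widehat\phi_T)\le C\mathcal D\,I(\widehat\phi_T)^{1/2}$, i.e. $|\widehat u_1|^2+|\widehat u_2|^2=I(\widehat\phi_T)\le C\mathcal D^2$, which is precisely \eqref{estimm1}.

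\emph{Exact controllability $\Rightarrow$ observability.} Here I would feed tailored data into \eqref{dulirelat}, exploiting that $(\phi,\Phi,\psi_i)$ depends only on $\phi_T$, so different data choices probe the same adjoint trajectory. Taking $y_0=0$, $y_{i,d}=0$, $y_T=\phi_T$ and using the controls $(\widehat u_1,\widehat u_2)$ from controllability with the bound \eqref{estimm1}, the identity yields $\mathbb E|\phi_T|_{\mathcal H}^2\le\|(\widehat u_1,\widehat u_2)\|\,I(\phi_T)^{1/2}\le C(\mathbb E|\phi_T|_{\mathcal H}^2)^{1/2}I(\phi_T)^{1/2}$, hence $\mathbb E|\phi_T|_{\mathcal H}^2\le C\,I(\phi_T)$. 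Taking instead $y_0=0$, $y_T=0$ and $y_{i,d}=\frac1{\alpha_i}\rho^{-2}\psi_i$ — which lies in $L^{2,\rho}_{\mathcal F}(0,T;\mathcal H)$ by \eqref{esttforrho0} and is $\mathbf F$-adapted since $\psi_i$ is — the source term becomes $\sum_i\mathbb E\int_0^T\rho^{-2}|K_i\psi_i|_{\mathcal H}^2\,dt$, and bounding the control pairing as before together with \eqref{estimm1} gives $\sum_i\mathbb E\int_0^T\rho^{-2}|K_i\psi_i|_{\mathcal H}^2\,dt\le C\,I(\phi_T)$. Adding the two bounds produces \eqref{observaineqexa}.

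The main obstacle is the coercivity step of the first implication: one must match the weighted observation norm $\rho^{-1}K_i\psi_i$ in \eqref{observaineqexa} to the continuity of the linear part of $\mathcal J$. This is what forces the target functions $y_{i,d}$ to be measured in the $\rho$-weighted space and the factor $K_i$ to be split symmetrically between $y_{i,d}$ and $\psi_i$, and it is exactly this bookkeeping that makes the cost estimate \eqref{estimm1} emerge with the weighted norm of $K_iy_{i,d}$. A secondary technical point is the well-posedness and energy estimate for the coupled forward–backward adjoint system \eqref{coeqq1.adjoint}, needed to control $\phi(0)$ and the $\psi_i$ by $\phi_T$; I would obtain these by a fixed-point/Grönwall argument on \eqref{coeqq1.adjoint} analogous to that used for \eqref{asteqq1.1} and \eqref{asteqq1.1back}.
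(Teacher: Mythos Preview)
Your proposal is correct and follows essentially the same route as the paper: both directions rest on the duality identity \eqref{dulirelat}, the ``observability $\Rightarrow$ controllability'' part is the HUM variational argument with the functional you call $\mathcal J$ (the paper's $J^{ext}$), and the converse is obtained by plugging carefully chosen data into \eqref{dulirelat} together with the a priori bound \eqref{estimm1}. The only cosmetic difference is that in the ``controllability $\Rightarrow$ observability'' direction the paper makes a single simultaneous choice $y_0=0$, $y_T=\phi_T$, $y_{i,d}=-\alpha_i^{-1}\rho^{-2}\psi_i$ and absorbs via a Young parameter $\tau$, whereas you split into two separate data choices and use Cauchy--Schwarz directly; both variants are standard and equivalent. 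Your explicit remark that the HUM controls are of minimal $L^2$ norm (hence minimize $J$) is a point the paper leaves implicit.
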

\begin{proof}
Let us assume that the system \eqref{coeqq1.1abscon} is exactly controllable at time \( T \). From \eqref{dulirelat}, we have
\begin{align*}
    \begin{aligned}
\mathbb{E}\langle y_T, \phi_T \rangle_{\mathcal{H}} - \sum_{i=1}^m \alpha_i \mathbb{E} \int_0^T \langle y_{i,d}, K_i^2 \psi_i \rangle_{\mathcal{H}} \, dt &= \mathbb{E} \int_0^T \langle \widehat{u}_1, D_1^* \phi \rangle_{\mathcal{U}_1} \, dt + \mathbb{E} \int_0^T \langle \widehat{u}_2, D_2^* \Phi \rangle_{\mathcal{U}_2} \, dt \\
&\quad + \mathbb{E} \langle y_0, \phi(0) \rangle_{\mathcal{H}}.
\end{aligned}
\end{align*}
Then, for any \( \tau > 0 \), it is easy to see that
\begin{align*}
    \begin{aligned}
\mathbb{E}\langle y_T, \phi_T \rangle_{\mathcal{H}} - \sum_{i=1}^m \alpha_i \mathbb{E} \int_0^T \langle y_{i,d}, K_i^2 \psi_i \rangle_{\mathcal{H}} \, dt \leq & \, \tau \left( |\widehat{u}_1|^2_{L^2_\mathcal{F}(0,T; \mathcal{U}_1)} + |\widehat{u}_2|^2_{L^2_\mathcal{F}(0,T; \mathcal{U}_2)} \right) \\
&+ \frac{1}{4 \tau} \left( |D_1^* \phi|_{L^2_\mathcal{F}(0,T; \mathcal{U}_1)}^2 + |D_2^* \Phi|_{L^2_\mathcal{F}(0,T; \mathcal{U}_2)}^2 \right) \\
&+ \mathbb{E} \langle y_0, \phi(0) \rangle_{\mathcal{H}}.
\end{aligned}
\end{align*}
Recalling \eqref{estimm1}, we obtain the inequality
\begin{align}\label{ineqqobs}
    \begin{aligned}
\mathbb{E}\langle y_T, \phi_T \rangle_{\mathcal{H}} - \sum_{i=1}^m \alpha_i \mathbb{E} \int_0^T \langle K_i y_{i,d}, K_i \psi_i \rangle_{\mathcal{H}} \, dt \leq & \, C \tau \left( \mathbb{E} |y_0|^2_{\mathcal{H}} + \mathbb{E} |y_T|^2_{\mathcal{H}} + \sum_{i=1}^m \mathbb{E} \int_0^T \rho^2 |K_i y_{i,d}|_{\mathcal{H}}^2 \, dt \right) \\
&+ \frac{1}{4 \tau} \left( |D_1^* \phi|_{L^2_\mathcal{F}(0,T; \mathcal{U}_1)}^2 + |D_2^* \Phi|_{L^2_\mathcal{F}(0,T; \mathcal{U}_2)}^2 \right) \\
&+ \mathbb{E} \langle y_0, \phi(0) \rangle_{\mathcal{H}}.
\end{aligned}
\end{align}
By choosing \( y_0 \equiv 0 \), \( y_T \equiv \phi_T \), and \( y_{i,d} \equiv -\frac{1}{\alpha_i} \rho^{-2} \psi_i \) (note that \( y_{i,d} \) belongs to \( L^{2, \rho}_\mathcal{F}(0,T; \mathcal{H}) \) by \eqref{esttforrho0}), we get
\begin{align}\label{ineqqobs2}
    \begin{aligned}
\mathbb{E} |\phi_T|^2_{\mathcal{H}} + \sum_{i=1}^m \mathbb{E} \int_0^T \rho^{-2} |K_i \psi_i|_{\mathcal{H}}^2 \, dt \leq & \, C \tau \left( \mathbb{E} |\phi_T|^2_{\mathcal{H}} + \sum_{i=1}^m \frac{1}{\alpha_i^2} \mathbb{E} \int_0^T \rho^{-2} |K_i \psi_i|_{\mathcal{H}}^2 \, dt \right) \\
&+ \frac{1}{4 \tau} \left( |D_1^* \phi|_{L^2_\mathcal{F}(0,T; \mathcal{U}_1)}^2 + |D_2^* \Phi|_{L^2_\mathcal{F}(0,T; \mathcal{U}_2)}^2 \right).
\end{aligned}
\end{align}
Therefore, by selecting a sufficiently small \( \tau \) in \eqref{ineqqobs2}, we obtain the observability inequality \eqref{observaineqexa}.

We now assume that the observability inequality \eqref{observaineqexa} holds. Let \( y_0 \in L^2_{\mathcal{F}_0}(\Omega; \mathcal{H}) \), \( y_T \in L^2_{\mathcal{F}_T}(\Omega; \mathcal{H}) \), and \( y_{i,d} \in L^{2,\rho}_{\mathcal{F}}(0,T; \mathcal{H}) \). We define the following functional on \( L^2_{\mathcal{F}_T}(\Omega; \mathcal{H}) \) as
\begin{align*}
J^{ext}(\phi_T) = & \frac{1}{2} \mathbb{E} \int_0^T \left( |D_1^* \phi|_{\mathcal{U}_1}^2 + |D_2^* \Phi|_{\mathcal{U}_2}^2 \right) dt - \mathbb{E} \langle y_T, \phi_T \rangle_{\mathcal{H}} + \mathbb{E} \langle y_0, \phi(0) \rangle_{\mathcal{H}} \\
& + \sum_{i=1}^m \alpha_i\mathbb{E} \int_0^T \langle y_{i,d}, K_i^2 \psi_i \rangle_{\mathcal{H}} \, dt.
\end{align*}
It is easy to verify that \( J^{ext}: L^2_{\mathcal{F}_T}(\Omega; \mathcal{H}) \longrightarrow \mathbb{R} \) is continuous and strictly convex. Let us now prove that \( J^{ext} \) is coercive.  For any \( \delta > 0 \), it is straightforward to see that
\begin{align}\label{ineqq11}
\begin{aligned}
-\mathbb{E} \langle y_T, \phi_T \rangle_{\mathcal{H}} & \geq -\frac{\delta}{2} \mathbb{E} |\phi_T|_{\mathcal{H}}^2 - \frac{1}{2 \delta} \mathbb{E} |y_T|_{\mathcal{H}}^2,
\end{aligned}
\end{align}
\begin{align}\label{ineqq22}
\begin{aligned}
\mathbb{E} \langle y_0, \phi(0) \rangle_{\mathcal{H}} & \geq -\frac{\delta}{2} C \mathbb{E} |\phi_T|_{\mathcal{H}}^2 - \frac{1}{2 \delta} \mathbb{E} |y_0|_{\mathcal{H}}^2,
\end{aligned}
\end{align}
and
\begin{align}\label{ineqq33}
    \begin{aligned}
\sum_{i=1}^m \alpha_i\mathbb{E} \int_0^T \langle y_{i,d}, K_i^2 \psi_i \rangle_{\mathcal{H}} \, dt & \geq -\frac{\delta}{2} \sum_{i=1}^m \mathbb{E} \int_0^T \rho^{-2} |K_i \psi_i|^2_{\mathcal{H}} \, dt - \frac{1}{2 \delta} \sum_{i=1}^m \alpha_i^2\mathbb{E} \int_0^T \rho^2 |K_i y_{i,d}|^2_{\mathcal{H}} \, dt.
    \end{aligned}
\end{align}
Using the observability inequality \eqref{observaineqexa}, combining \eqref{ineqq11}, \eqref{ineqq22}, and \eqref{ineqq33}, and choosing a sufficiently small \( \delta \), we obtain that
\[
J^{ext}(\phi_T) \geq C \mathbb{E} |\phi_T|^2_{\mathcal{H}} - C \mathbb{E} |y_0|_{\mathcal{H}}^2 - C \mathbb{E} |y_T|_{\mathcal{H}}^2 - C \sum_{i=1}^m \alpha_i^2\mathbb{E} \int_0^T \rho^2 |K_i y_{i,d}|^2_{\mathcal{H}} \, dt,
\]
which implies that the functional \( J^{ext} \) is coercive in \( L^2_{\mathcal{F}_T}(\Omega; \mathcal{H}) \). Thus, it admits a unique minimum \( \widehat{\phi}_T \in L^2_{\mathcal{F}_T}(\Omega;\mathcal{H}) \). Then, the Gateaux derivative of \( J^{ext} \) verifies the Euler equation, i.e., for all \( \phi_T \in L^2_{\mathcal{F}_T}(\Omega;\mathcal{H}) \),
\begin{align}\label{limmGDD}
\lim_{h \to 0} \frac{J^{ext}(\widehat{\phi}_T + h\phi_T) - J^{ext}(\widehat{\phi}_T)}{h} = 0.
\end{align}
By a direct computation of the limit in \eqref{limmGDD}, we find that
\begin{align}\label{idenfronjvare}
    \begin{aligned}
    & \mathbb{E}\int_0^T \langle D_1^*\widehat{\phi}, D_1^*\phi \rangle_{\mathcal{U}_1} \, dt + \mathbb{E}\int_0^T \langle D_2^*\widehat{\Phi}, D_2^*\Phi \rangle_{\mathcal{U}_2} \, dt - \mathbb{E}\langle y_T, \phi_T \rangle_{\mathcal{H}} \\
    & + \mathbb{E}\langle y_0, \phi(0) \rangle_{\mathcal{H}} + \sum_{i=1}^m \alpha_i \mathbb{E}\int_0^T \langle y_{i,d}, K_i^2 \psi_i \rangle_{\mathcal{H}} \, dt = 0,
    \end{aligned}
\end{align}
where \( (\widehat{\phi}, \widehat{\Phi}; \widehat{\psi}_i) \) is the solution of \eqref{coeqq1.adjoint} associated with the final state \( \widehat{\phi}_T \). 

We now choose the controls \( (\widehat{u}_1, \widehat{u}_2) = (D_1^*\widehat{\phi}, D_2^*\widehat{\Phi}) \) in \eqref{dulirelat}. We have that
\begin{align}\label{dulirelatexact}
\begin{aligned}
& -\mathbb{E}\langle \widehat{y}(T), \phi_T \rangle_{\mathcal{H}} + \mathbb{E}\langle y_0, \phi(0) \rangle_{\mathcal{H}} + \mathbb{E}\int_0^T \langle D_1^*\widehat{\phi}, D_1^*\phi \rangle_{\mathcal{U}_1} \, dt \\
& + \mathbb{E}\int_0^T \langle D_2^*\widehat{\Phi}, D_2^*\Phi \rangle_{\mathcal{U}_2} \, dt + \sum_{i=1}^m \alpha_i \mathbb{E}\int_0^T \langle y_{i,d}, K_i^2 \psi_i \rangle_{\mathcal{H}} \, dt = 0,
\end{aligned}
\end{align}
where \( (\widehat{y}; \widehat{z}_i, \widehat{Z}_i) \) is the solution of \eqref{coeqq1.1abscon} associated with the controls \( (\widehat{u}_1, \widehat{u}_2) \). 

Combining \eqref{idenfronjvare} and \eqref{dulirelatexact}, we deduce that
\begin{align*}
    \mathbb{E} \langle \widehat{y}(T) - y_T, \phi_T \rangle_{\mathcal{H}} = 0, \qquad \forall \phi_T \in L^2_{\mathcal{F}_T}(\Omega; \mathcal{H}),
\end{align*}
which implies that
\begin{align}\label{apprforexactcon}
    \widehat{y}(T) = y_T, \quad \text{in} \; L^2_{\mathcal{F}_T}(\Omega; \mathcal{H}).
\end{align}
Taking \( \phi_T = \widehat{\phi}_T \) in \eqref{idenfronjvare}, we have that
\begin{align}\label{idenfsecd}
    \begin{aligned}
    \mathbb{E}\int_0^T |\widehat{u}_1|^2_{\mathcal{U}_1} \, dt + \mathbb{E}\int_0^T |\widehat{u}_2|^2_{\mathcal{U}_2} \, dt = & \, \mathbb{E}\langle y_T, \phi_T \rangle_{\mathcal{H}} - \mathbb{E}\langle y_0, \phi(0) \rangle_{\mathcal{H}} \\
    & - \sum_{i=1}^m \alpha_i \mathbb{E}\int_0^T \langle y_{i,d}, K_i^2 \psi_i \rangle_{\mathcal{H}} \, dt.
    \end{aligned}
\end{align}
Applying the observability inequality \eqref{observaineqexa} to the right-hand side of \eqref{idenfsecd}, we deduce that
\begin{align*}
\begin{aligned}
|\widehat{u}_1|^2_{L^2_\mathcal{F}(0,T;\mathcal{U}_1)} + |\widehat{u}_2|^2_{L^2_\mathcal{F}(0,T;\mathcal{U}_2)} \leq C \Bigg( \mathbb{E} |y_0|^2_{\mathcal{H}} + \mathbb{E} |y_T|^2_{\mathcal{H}} + \sum_{i=1}^m \mathbb{E} \int_0^T \rho^2 |K_iy_{i,d}|_{\mathcal{H}}^2 \, dt \Bigg).
\end{aligned}
\end{align*}
This completes the proof of Theorem \ref{thmm4.1ex}.
\end{proof}
For the null controllability, we have the following characterization.
\begin{thm}\label{thmm4.2ex}
System \eqref{coeqq1.1abscon} is null controllability at time \( T \) if and only if there exists a constant \( C > 0 \) such that the solutions of \eqref{coeqq1.adjoint} satisfy the following (null) observability inequality:
\begin{align}\label{observaineqnuu}
\begin{aligned}
\mathbb{E}|\phi(0)|^2_{\mathcal{H}} + \sum_{i=1}^m \mathbb{E} \int_0^T \rho^{-2}|K_i\psi_i|_{\mathcal{H}}^2 \, dt \leq C \mathbb{E} \int_0^T \left( |D_1^*\phi|_{\mathcal{U}_1}^2 + |D_2^*\Phi|_{\mathcal{U}_2}^2 \right) \, dt.
\end{aligned}
\end{align}
\end{thm}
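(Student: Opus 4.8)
The plan is to follow the scheme of the proof of Theorem \ref{thmm4.1ex}, adapting it to the null-controllability setting by imposing $y_T\equiv 0$ throughout and by letting the initial value $\phi(0)$ play the role that $\phi_T$ played in the exact case.

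For the necessity part (controllability $\Rightarrow$ observability), I would start from the duality identity \eqref{dulirelat}, now with $\widehat{y}(T)=0$, which gives
\[
-\mathbb{E}\langle y_0,\phi(0)\rangle_{\mathcal{H}}-\sum_{i=1}^m\alpha_i\mathbb{E}\int_0^T\langle y_{i,d},K_i^2\psi_i\rangle_{\mathcal{H}}\,dt=\mathbb{E}\int_0^T\langle\widehat{u}_1,D_1^*\phi\rangle_{\mathcal{U}_1}\,dt+\mathbb{E}\int_0^T\langle\widehat{u}_2,D_2^*\Phi\rangle_{\mathcal{U}_2}\,dt.
\]
Applying Young's inequality with a parameter $\tau>0$ to the right-hand side and invoking the control estimate \eqref{estimm2} in place of \eqref{estimm1}, the decisive step is to test the resulting inequality against the specific choices $y_0\equiv-\phi(0)$ and $y_{i,d}\equiv-\frac{1}{\alpha_i}\rho^{-2}\psi_i$ (both admissible by \eqref{esttforrho0}). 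With these choices, and using $\langle y_{i,d},K_i^2\psi_i\rangle_{\mathcal{H}}=-\frac{1}{\alpha_i}\rho^{-2}|K_i\psi_i|^2_{\mathcal{H}}$ together with the self-adjointness of $K_i$, the left-hand side collapses exactly to $\mathbb{E}|\phi(0)|^2_{\mathcal{H}}+\sum_{i=1}^m\mathbb{E}\int_0^T\rho^{-2}|K_i\psi_i|^2_{\mathcal{H}}\,dt$, while the data terms on the right reproduce the same quantity multiplied by $C\tau$; choosing $\tau$ small enough to absorb them into the left-hand side yields \eqref{observaineqnuu}.

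For the sufficiency part (observability $\Rightarrow$ controllability), I would introduce the functional on $L^2_{\mathcal{F}_T}(\Omega;\mathcal{H})$ given by
\[
J^{nc}(\phi_T)=\frac{1}{2}\mathbb{E}\int_0^T\left(|D_1^*\phi|_{\mathcal{U}_1}^2+|D_2^*\Phi|_{\mathcal{U}_2}^2\right)dt+\mathbb{E}\langle y_0,\phi(0)\rangle_{\mathcal{H}}+\sum_{i=1}^m\alpha_i\mathbb{E}\int_0^T\langle y_{i,d},K_i^2\psi_i\rangle_{\mathcal{H}}\,dt,
\]
which is the analogue of $J^{ext}$ with the term $-\mathbb{E}\langle y_T,\phi_T\rangle_{\mathcal{H}}$ removed. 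It is continuous and convex; for coercivity I would bound the two coupling terms from below by Young's inequality and absorb the resulting $\mathbb{E}|\phi(0)|^2_{\mathcal{H}}$ and $\sum_{i}\mathbb{E}\int_0^T\rho^{-2}|K_i\psi_i|^2_{\mathcal{H}}\,dt$ into the quadratic term via the null observability \eqref{observaineqnuu} after choosing the Young parameter small. The minimizer $\widehat{\phi}_T$ then yields, through its Euler equation, the leader controls $(\widehat{u}_1,\widehat{u}_2)=(D_1^*\widehat{\phi},D_2^*\widehat{\Phi})$, and combining this Euler identity with \eqref{dulirelat} forces $\widehat{y}(T)=0$ together with the quantitative bound \eqref{estimm2}.

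The main obstacle is precisely the coercivity in the sufficiency part: unlike the exact case, the null observability inequality \eqref{observaineqnuu} controls $\mathbb{E}|\phi(0)|^2_{\mathcal{H}}$ rather than $\mathbb{E}|\phi_T|^2_{\mathcal{H}}$, so $J^{nc}$ is coercive only with respect to the observed seminorm $\big(\mathbb{E}\int_0^T(|D_1^*\phi|^2_{\mathcal{U}_1}+|D_2^*\Phi|^2_{\mathcal{U}_2})\,dt\big)^{1/2}$ and not with respect to the full norm of $L^2_{\mathcal{F}_T}(\Omega;\mathcal{H})$. To obtain the minimizer rigorously, one works in the completion of $L^2_{\mathcal{F}_T}(\Omega;\mathcal{H})$ (modulo the null space of the seminorm) with respect to this observed seminorm; the observability inequality guarantees that the linear part of $J^{nc}$ remains continuous on that space, so the direct method still applies and the controls $(D_1^*\widehat{\phi},D_2^*\widehat{\Phi})$ are well defined in $\mathcal{U}_T$.
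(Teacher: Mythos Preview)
Your necessity argument matches the paper's proof exactly: same duality identity, same Young-inequality step with \eqref{estimm2}, and the same test choices $y_0\equiv-\phi(0)$, $y_{i,d}\equiv-\tfrac{1}{\alpha_i}\rho^{-2}\psi_i$.

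For the sufficiency part you take a genuinely different route. The paper does \emph{not} work in a completed quotient space; instead it introduces for each $\varepsilon>0$ the penalized functional
\[
J^{nul}_\varepsilon(\phi_T)=J^{nc}(\phi_T)+\varepsilon\,|\phi_T|_{L^2_{\mathcal{F}_T}(\Omega;\mathcal{H})},
\]
which is strictly convex and coercive on the \emph{original} space $L^2_{\mathcal{F}_T}(\Omega;\mathcal{H})$ thanks to the added $\varepsilon$-term. Its minimizer $\phi_T^\varepsilon$ produces controls $(u_1^\varepsilon,u_2^\varepsilon)=(D_1^*\phi^\varepsilon,D_2^*\Phi^\varepsilon)$ satisfying $|y_\varepsilon(T)|_{L^2_{\mathcal{F}_T}(\Omega;\mathcal{H})}\le\varepsilon$ together with the $\varepsilon$-uniform bound \eqref{estimm2}; one then extracts a weakly convergent subsequence $(u_1^\varepsilon,u_2^\varepsilon)\rightharpoonup(\widehat u_1,\widehat u_2)$ and passes to the limit to get $\widehat y(T)=0$. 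Your HUM-type completion argument is a legitimate alternative and is in fact more direct once the abstract machinery is set up (no limit procedure, no subsequence extraction), while the paper's penalization approach is more elementary in that it never leaves $L^2_{\mathcal{F}_T}(\Omega;\mathcal{H})$ and avoids discussing what elements of the completed space actually are. Both yield the same controls with the same estimate \eqref{estimm2}.
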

\begin{proof}
Let us assume that the system \eqref{coeqq1.1abscon} is null controllability at time $T$. From \eqref{dulirelat}, we have that
\begin{align*}
    \begin{aligned}
    -\mathbb{E}\langle y_0,\phi(0)\rangle_{\mathcal{H}}-\sum_{i=1}^m\alpha_i\mathbb{E}\int_0^T \langle y_{i,d},K_i^2\psi_i\rangle_{\mathcal{H}} dt = &\, \mathbb{E}\int_0^T \langle \widehat{u}_1,D_1^*\phi\rangle_{\mathcal{U}_1} dt + \mathbb{E}\int_0^T \langle \widehat{u}_2,D_2^*\Phi\rangle_{\mathcal{U}_2} dt.
    \end{aligned}
\end{align*}
For any \( \tau > 0 \), it follows that
\begin{align*}
    \begin{aligned}
    -\mathbb{E}\langle y_0,\phi(0)\rangle_{\mathcal{H}}-\sum_{i=1}^m\alpha_i\mathbb{E}\int_0^T \langle y_{i,d},K_i^2\psi_i\rangle_{\mathcal{H}} dt \leq &\, \tau\left( |\widehat{u}_1|^2_{L^2_\mathcal{F}(0,T;\mathcal{U}_1)} + |\widehat{u}_2|^2_{L^2_\mathcal{F}(0,T;\mathcal{U}_2)} \right) \\
    &+ \frac{1}{4\tau} \left( |D_1^*\phi|_{L^2_\mathcal{F}(0,T;\mathcal{U}_1)}^2 + |D_2^*\Phi|_{L^2_\mathcal{F}(0,T;\mathcal{U}_2)}^2 \right).
    \end{aligned}
\end{align*}
Recalling the estimate \eqref{estimm2}, we deduce that
\begin{align}\label{ineqqobsnull}
    \begin{aligned}
    -\mathbb{E}\langle y_0,\phi(0)\rangle_{\mathcal{H}}-\sum_{i=1}^m\alpha_i\mathbb{E}\int_0^T \langle K_iy_{i,d},K_i\psi_i\rangle_{\mathcal{H}} dt \leq &\, C\tau \left( \mathbb{E} |y_0|^2_{\mathcal{H}} + \sum_{i=1}^m \mathbb{E} \int_0^T \rho^2 |K_iy_{i,d}|_{\mathcal{H}}^2 \, dt \right) \\
    &+ \frac{1}{4\tau} \left( |D_1^*\phi|_{L^2_\mathcal{F}(0,T;\mathcal{U}_1)}^2 + |D_2^*\Phi|_{L^2_\mathcal{F}(0,T;\mathcal{U}_2)}^2 \right).
    \end{aligned}
\end{align}
Choosing \( y_0 \equiv -\phi(0) \) and \( y_{i,d} \equiv -\frac{1}{\alpha_i}\rho^{-2}\psi_i \) in \eqref{ineqqobsnull}, we get
\begin{align}\label{ineqqobs2null}
    \begin{aligned}
    \mathbb{E}|\phi(0)|^2_{\mathcal{H}} + \sum_{i=1}^m \mathbb{E} \int_0^T \rho^{-2} |K_i \psi_i|^2_{\mathcal{H}} dt \leq &\, C\tau \left( \mathbb{E} |\phi(0)|^2_{\mathcal{H}} + \sum_{i=1}^m \frac{1}{\alpha_i^2} \mathbb{E} \int_0^T \rho^{-2} |K_i \psi_i|_{\mathcal{H}}^2 \, dt \right) \\
    &+ \frac{1}{4\tau} \left( |D_1^*\phi|_{L^2_\mathcal{F}(0,T;\mathcal{U}_1)}^2 + |D_2^*\Phi|_{L^2_\mathcal{F}(0,T;\mathcal{U}_2)}^2 \right).
    \end{aligned}
\end{align}
Taking a sufficiently small \( \tau \) in \eqref{ineqqobs2null}, we obtain the observability inequality \eqref{observaineqnuu}.

We now suppose that the inequality \eqref{observaineqnuu} holds. Let $y_0 \in L^2_{\mathcal{F}_0}(\Omega; \mathcal{H})$, $y_{i,d} \in L^{2,\rho}_{\mathcal{F}}(0,T; \mathcal{H})$, and $\varepsilon > 0$. We define the following functional on $L^2_{\mathcal{F}_T}(\Omega; \mathcal{H})$ as follows:
\begin{align*}
J^{nul}_\varepsilon(\phi_T) = &\frac{1}{2} \mathbb{E} \int_0^T \left( |D_1^* \phi|_{\mathcal{U}_1}^2 + |D_2^* \Phi|_{\mathcal{U}_2}^2 \right) \, dt + \mathbb{E} \langle y_0, \phi(0) \rangle_{\mathcal{H}} \\
&+ \varepsilon |\phi_T|_{L^2_{\mathcal{F}_T}(\Omega; \mathcal{H})} + \sum_{i=1}^m \alpha_i\mathbb{E} \int_0^T \langle y_{i,d}, K_i^2 \psi_i \rangle_{\mathcal{H}} \, dt.
\end{align*}
It is easy to verify that $J^{nul}_\varepsilon: L^2_{\mathcal{F}_T}(\Omega; \mathcal{H}) \longrightarrow \mathbb{R}$ is continuous and strictly convex. Applying the observability inequality \eqref{observaineqnuu}, and using the same computations as in the proof of Theorem \ref{thmm4.1ex}, it follows that the functional $J^{nul}_\varepsilon$ is also coercive. Therefore, $J^{nul}_\varepsilon$ admits a unique minimum $\phi_T^\varepsilon \in L^2_{\mathcal{F}_T}(\Omega; \mathcal{H})$. If $\phi_T^\varepsilon \neq 0$, by the Euler equation, we deduce that for all $\phi_T \in L^2_{\mathcal{F}_T}(\Omega; \mathcal{H})$,
\begin{align} \label{idenfronjvarenull}
    \begin{aligned}
    & \mathbb{E} \int_0^T \langle D_1^* \phi^\varepsilon, D_1^* \phi \rangle_{\mathcal{U}_1} \, dt + \mathbb{E} \int_0^T \langle D_2^* \Phi^\varepsilon, D_2^* \Phi \rangle_{\mathcal{U}_2} \, dt + \mathbb{E} \langle y_0, \phi(0) \rangle_{\mathcal{H}} \\
    &+ \varepsilon \frac{\langle \phi_T^\varepsilon, \phi_T \rangle_{L^2_{\mathcal{F}_T}(\Omega; \mathcal{H})}}{|\phi_T^\varepsilon|_{L^2_{\mathcal{F}_T}(\Omega; \mathcal{H})}} + \sum_{i=1}^m \alpha_i \mathbb{E} \int_0^T \langle y_{i,d}, K_i^2 \psi_i \rangle_{\mathcal{H}} \, dt = 0,
    \end{aligned}
\end{align}
where $(\phi^\varepsilon, \Phi^\varepsilon; \psi_i^\varepsilon)$ is the solution of \eqref{coeqq1.adjoint} associated with the final state $\phi_T^\varepsilon$.

Choosing the controls $(u_1^\varepsilon, u_2^\varepsilon) = (D_1^* \phi^\varepsilon, D_2^* \Phi^\varepsilon)$ in \eqref{dulirelat}, we have that
\begin{align} \label{dulirelatnull}
\begin{aligned}
    & - \mathbb{E} \langle y_\varepsilon(T), \phi_T \rangle_{\mathcal{H}} + \mathbb{E} \langle y_0, \phi(0) \rangle_{\mathcal{H}} + \mathbb{E} \int_0^T \langle D_1^* \phi^\varepsilon, D_1^* \phi \rangle_{\mathcal{U}_1} \, dt \\
    &+ \mathbb{E} \int_0^T \langle D_2^* \Phi^\varepsilon, D_2^* \Phi \rangle_{\mathcal{U}_2} \, dt + \sum_{i=1}^m \alpha_i \mathbb{E} \int_0^T \langle y_{i,d}, K_i^2 \psi_i \rangle_{\mathcal{H}} \, dt = 0,
\end{aligned}
\end{align}
where $(y_\varepsilon; z_i^\varepsilon, Z_i^\varepsilon)$ is the solution of \eqref{coeqq1.1abscon} associated with the controls $(u_1^\varepsilon, u_2^\varepsilon)$. Combining \eqref{idenfronjvarenull} and \eqref{dulirelatnull}, we deduce that
\begin{align*}
    \mathbb{E} \langle y_\varepsilon(T), \phi_T \rangle_{\mathcal{H}} + \varepsilon \frac{\langle \phi_T^\varepsilon, \phi_T \rangle_{L^2_{\mathcal{F}_T}(\Omega; \mathcal{H})}}{|\phi_T^\varepsilon|_{L^2_{\mathcal{F}_T}(\Omega; \mathcal{H})}} = 0, \quad \forall \phi_T \in L^2_{\mathcal{F}_T}(\Omega; \mathcal{H}),
\end{align*}
which implies that
\begin{align} \label{apprforexactconnull}
    |y_\varepsilon(T)|_{L^2_{\mathcal{F}_T}(\Omega; \mathcal{H})} \leq \varepsilon.
\end{align}
Taking $\phi_T = \phi_T^\varepsilon$ in \eqref{idenfronjvarenull}, we have that
\begin{align*}
    \begin{aligned}
    \mathbb{E} \int_0^T |u_1^\varepsilon|^2_{\mathcal{U}_1} \, dt + \mathbb{E} \int_0^T |u_2^\varepsilon|^2_{\mathcal{U}_2} \, dt \leq & \, - \mathbb{E} \langle y_0, \phi(0) \rangle_{\mathcal{H}} - \sum_{i=1}^m \alpha_i \mathbb{E} \int_0^T \langle y_{i,d}, K_i^2 \psi_i \rangle_{\mathcal{H}} \, dt.
    \end{aligned}
\end{align*}
Using the observability inequality \eqref{observaineqnuu}, we deduce that
\begin{align} \label{estimm1forexatnull}
\begin{aligned}
    |u_1^\varepsilon|^2_{L^2_\mathcal{F}(0,T; \mathcal{U}_1)} + |u_2^\varepsilon|^2_{L^2_\mathcal{F}(0,T; \mathcal{U}_2)} \leq C \left( \mathbb{E} |y_0|^2_{\mathcal{H}} + \sum_{i=1}^m \mathbb{E} \int_0^T \rho^2 |K_i y_{i,d}|_{\mathcal{H}}^2 \, dt \right).
\end{aligned}
\end{align}
If \( \phi_T^\varepsilon = 0 \), we have (see, e.g., \cite{FabPuZuaz95})
\begin{align} \label{Ezu.1}
\lim_{h \to 0^{+}} \frac{J^{nul}_{\varepsilon}(h \phi_T)}{h} \geq 0, \quad \forall \phi_T \in L^2_{\mathcal{F}_T}(\Omega; \mathcal{H}).
\end{align}
By \eqref{Ezu.1}, and choosing \( (u_1^\varepsilon, u_2^\varepsilon) = (0, 0) \), we observe that \eqref{apprforexactconnull} and \eqref{estimm1forexatnull} still hold. From \eqref{estimm1forexatnull}, we deduce the existence of a subsequence \( (u_1^\varepsilon, u_2^\varepsilon) \) (of the sequence \( (u_1^\varepsilon, u_2^\varepsilon) \)) such that, as \( \varepsilon \to 0 \),
\begin{align} \label{weakconvrnull}
\begin{aligned}
    u_1^\varepsilon & \longrightarrow \widehat{u}_1 \quad \text{weakly in} \, L^2((0,T) \times \Omega; \mathcal{U}_1), \\
    u_2^\varepsilon & \longrightarrow \widehat{u}_2 \quad \text{weakly in} \, L^2((0,T) \times \Omega; \mathcal{U}_2).
\end{aligned}
\end{align}
Thus, by \eqref{weakconvrnull}, we also have that
\begin{equation} \label{Eq56null}
    y_\varepsilon(T) \longrightarrow \widehat{y}(T) \quad \text{weakly in} \; L^2_{\mathcal{F}_T}(\Omega; \mathcal{H}), \quad \text{as} \, \varepsilon \to 0,
\end{equation}
where the state $\widehat{y}$ is given by the unique solution \( (\widehat{y}; \widehat{z}_i, \widehat{Z}_i) \) of the system \eqref{coeqq1.1abscon} associated with the controls \( (\widehat{u}_1, \widehat{u}_2) \). Finally, combining \eqref{apprforexactconnull} and \eqref{Eq56null}, we conclude that
\[
\widehat{y}(T) = 0, \quad \mathbb{P}\textnormal{-a.s.}
\]
Moreover, the inequality \eqref{estimm2} can be derived from \eqref{estimm1forexatnull} and \eqref{weakconvrnull}. This completes the proof of Theorem \ref{thmm4.2ex}.
\end{proof}
As for the approximate controllability, we have the following characterization.
\begin{thm}\label{thmmapp43app}
System \eqref{coeqq1.1abscon} is approximately controllable at time $T$ if and only if the solutions of \eqref{coeqq1.adjoint} satisfy the following unique continuation property:
\begin{align}\label{observainuniqc}
D_1^*\phi = D_2^*\Phi = 0, \quad \forall t \in (0,T), \quad \mathbb{P}\textnormal{-a.s.} \quad \Longrightarrow \quad \phi_T = 0, \quad \mathbb{P}\textnormal{-a.s.}
\end{align}
\end{thm}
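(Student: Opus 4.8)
The plan is to exploit the classical duality between approximate controllability and a unique continuation property, reusing the penalized convex minimization scheme of Fabre--Puel--Zuazua \cite{FabPuZuaz95} that already underlies the proof of Theorem \ref{thmm4.2ex}. The duality identity \eqref{dulirelat} is the single bridge between the controlled system \eqref{coeqq1.1abscon} and the adjoint system \eqref{coeqq1.adjoint}, and both implications reduce to reading that identity appropriately.

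\textbf{Necessity.} Assume \eqref{coeqq1.1abscon} is approximately controllable and let $\phi_T$ satisfy $D_1^*\phi = D_2^*\Phi = 0$ on $(0,T)$, $\mathbb{P}$-a.s. Taking $y_0\equiv 0$ and $y_{i,d}\equiv 0$ in \eqref{dulirelat}, the whole right-hand side vanishes, so $\mathbb{E}\langle y(T),\phi_T\rangle_{\mathcal{H}}=0$ for every choice of leader controls. Applying approximate controllability with these data and target $y_T=\phi_T$, for each $\varepsilon>0$ there is a control whose state obeys $\mathbb{E}|y(T)-\phi_T|^2_{\mathcal{H}}\le\varepsilon$; combining this with the orthogonality just obtained and the Cauchy--Schwarz inequality gives $\mathbb{E}|\phi_T|^2_{\mathcal{H}}=\mathbb{E}\langle \phi_T-y(T),\phi_T\rangle_{\mathcal{H}}\le\sqrt{\varepsilon}\,(\mathbb{E}|\phi_T|^2_{\mathcal{H}})^{1/2}$. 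Letting $\varepsilon\to0$ forces $\phi_T=0$, which is exactly \eqref{observainuniqc}.

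\textbf{Sufficiency.} Assume \eqref{observainuniqc}, fix $y_0,y_T,\{y_{i,d}\}$ and $\varepsilon>0$, and define on $L^2_{\mathcal{F}_T}(\Omega;\mathcal{H})$ the functional
\[
J^{app}_\varepsilon(\phi_T)=\tfrac12\mathbb{E}\int_0^T\big(|D_1^*\phi|_{\mathcal{U}_1}^2+|D_2^*\Phi|_{\mathcal{U}_2}^2\big)dt-\mathbb{E}\langle y_T,\phi_T\rangle_{\mathcal{H}}+\mathbb{E}\langle y_0,\phi(0)\rangle_{\mathcal{H}}+\varepsilon|\phi_T|_{L^2_{\mathcal{F}_T}(\Omega;\mathcal{H})}+\sum_{i=1}^m\alpha_i\mathbb{E}\int_0^T\langle y_{i,d},K_i^2\psi_i\rangle_{\mathcal{H}}dt,
\]
where $(\phi,\Phi;\psi_i)$ solves \eqref{coeqq1.adjoint} with terminal datum $\phi_T$. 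As in Theorem \ref{thmm4.2ex}, $J^{app}_\varepsilon$ is continuous and strictly convex, so the whole problem reduces to the coercivity bound $\liminf_{|\phi_T|\to\infty}J^{app}_\varepsilon(\phi_T)/|\phi_T|_{L^2_{\mathcal{F}_T}(\Omega;\mathcal{H})}\ge\varepsilon$. To prove it I take $\phi_T^n$ with $|\phi_T^n|\to\infty$, normalize $\widehat\phi_T^n=\phi_T^n/|\phi_T^n|$, and split into two cases. If $\liminf_n\mathbb{E}\int_0^T(|D_1^*\widehat\phi^n|_{\mathcal{U}_1}^2+|D_2^*\widehat\Phi^n|_{\mathcal{U}_2}^2)dt>0$, the quadratic term drives the quotient to $+\infty$ while all the remaining terms stay bounded, since the linear solution map $\phi_T\mapsto(\phi,\Phi,\phi(0),\psi_i)$ of \eqref{coeqq1.adjoint} is bounded. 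If instead that liminf is $0$, I pass to a subsequence along which $\widehat\phi_T^n\rightharpoonup\zeta_T$ weakly and $D_1^*\widehat\phi^n,D_2^*\widehat\Phi^n\to0$ strongly in $L^2$; by linearity and boundedness of the solution operator the datum $\zeta_T$ produces a solution with $D_1^*\phi=D_2^*\Phi=0$, so \eqref{observainuniqc} yields $\zeta_T=0$, and passing to the (weak) limit in the weakly continuous terms leaves only the penalty, giving the bound $\ge\varepsilon$.

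Once coercivity holds, $J^{app}_\varepsilon$ attains a minimum $\widehat\phi_T$; writing the Euler equation and choosing $(\widehat u_1,\widehat u_2)=(D_1^*\widehat\phi,D_2^*\widehat\Phi)$, substitution into the duality identity \eqref{dulirelat} gives $\widehat y(T)-y_T=-\varepsilon\,\widehat\phi_T/|\widehat\phi_T|_{L^2_{\mathcal{F}_T}(\Omega;\mathcal{H})}$, hence $\mathbb{E}|\widehat y(T)-y_T|^2_{\mathcal{H}}\le\varepsilon$ after the harmless rescaling $\varepsilon\mapsto\sqrt\varepsilon$ of the penalty, while the minimization of $J$ is handled verbatim as in Theorem \ref{thmm4.2ex}. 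I expect the main obstacle to be Case~2 of the coercivity estimate: one must carefully combine the weak convergence of the normalized terminal data, the strong vanishing of the observation operators, and the weak continuity of the solution map of the coupled backward--forward system \eqref{coeqq1.adjoint} in order to legitimately identify the weak limit and invoke the unique continuation property \eqref{observainuniqc}; everything else is a direct adaptation of the computations already carried out for the null controllability case.
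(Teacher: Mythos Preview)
Your proposal is correct and follows essentially the same approach as the paper. For necessity, the paper packages your direct orthogonality computation into the abstract statement that the range of the input-to-final-state map $\mathcal{L}_T$ is dense if and only if its adjoint $\mathcal{L}_T^*(\phi_T)=(D_1^*\phi,D_2^*\Phi)$ is injective, but this is exactly what your Cauchy--Schwarz argument unpacks; for sufficiency, the paper uses the same penalized functional $J^{app}_\varepsilon$, the same two-case coercivity analysis along normalized sequences, and the same Euler-equation/duality comparison to conclude $|y_\varepsilon(T)-y_T|_{L^2_{\mathcal{F}_T}(\Omega;\mathcal{H})}\le\varepsilon$.
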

\begin{proof}
Assume that the system \eqref{coeqq1.1abscon} is approximately controllable at time $T$, and define the following operator:
\begin{equation*}
\mathcal{L}_T: \mathcal{U}_T \longrightarrow L^2_{\mathcal{F}_T}(\Omega; \mathcal{H}), \quad (u_1, u_2) \longmapsto y(T),
\end{equation*}
where the state $y$ is given by the unique solution $(y; z_i, Z_i)$ of the system \eqref{coeqq1.1abscon} with $y_0 \equiv 0$ and $y_{i,d} \equiv 0$ ($i = 1, 2, \dots, m$). Then, it is easy to see that $\mathcal{R}(\mathcal{L}_T)$ is dense in $L^2_{\mathcal{F}_T}(\Omega; \mathcal{H})$, which is equivalent to the adjoint operator $\mathcal{L}_T^*$ being injective. On the other hand, from the duality relation \eqref{dulirelat}, it is easy to see that
\[
\langle \mathcal{L}_T(u_1, u_2), \phi_T \rangle_{L^2_{\mathcal{F}_T}(\Omega; \mathcal{H})} = \langle (u_1, u_2), (D_1^*\phi, D_2^*\Phi) \rangle_{\mathcal{U}_T},
\]
which implies that $\mathcal{L}_T^*(\phi_T) = (D_1^*\phi, D_2^*\Phi)$. Therefore, if the system \eqref{coeqq1.1abscon} is approximately controllable at time $T$, we deduce that the unique continuation property \eqref{observainuniqc} holds.

Let us now assume that the property \eqref{observainuniqc} holds. Let \( y_0 \in L^2_{\mathcal{F}_0}(\Omega; \mathcal{H}) \), \( y_T \in L^2_{\mathcal{F}_T}(\Omega; \mathcal{H}) \), \( y_{i,d} \in L^{2}_{\mathcal{F}}(0,T; \mathcal{H}) \), and \( \varepsilon > 0 \). We define the following functional on \( L^2_{\mathcal{F}_T}(\Omega; \mathcal{H}) \):
\[
J^{app}_\varepsilon(\phi_T) = \frac{1}{2} \mathbb{E} \int_0^T \left( |D_1^* \phi|_{\mathcal{U}_1}^2 + |D_2^* \Phi|_{\mathcal{U}_2}^2 \right) dt - \mathbb{E} \langle y_T, \phi_T \rangle_{\mathcal{H}} + \mathbb{E} \langle y_0, \phi(0) \rangle_{\mathcal{H}}
\]
\[
+ \varepsilon \left| \phi_T \right|_{L^2_{\mathcal{F}_T}(\Omega; \mathcal{H})} + \sum_{i=1}^m \alpha_i\mathbb{E} \int_0^T \langle y_{i,d}, K_i^2 \psi_i \rangle_{\mathcal{H}} dt.
\]
It is straightforward to observe that \( J^{app}_\varepsilon: L^2_{\mathcal{F}_T}(\Omega; \mathcal{H}) \longrightarrow \mathbb{R} \) is continuous and strictly convex. We now demonstrate that \( J^{app}_\varepsilon \) is coercive, which suffices to prove that
\[
\liminf_{|\phi_T|_{L^2_{\mathcal{F}_T}(\Omega; \mathcal{H})} \to +\infty} \frac{J_\varepsilon^{app}(\phi_T)}{|\phi_T|_{L^2_{\mathcal{F}_T}(\Omega; \mathcal{H})}} \geq \varepsilon.
\]
Let \( \{\phi_T^n\}_{n \geq 1} \subset L^2_{\mathcal{F}_T}(\Omega; \mathcal{H}) \) be a sequence of final data of the system \eqref{coeqq1.adjoint} such that \( |\phi_T^n|_{L^2_{\mathcal{F}_T}(\Omega; \mathcal{H})} \to +\infty \) as \( n \to +\infty \). Set \( \widetilde{\phi}_T^n = \frac{\phi_T^n}{|\phi_T^n|_{L^2_{\mathcal{F}_T}(\Omega; \mathcal{H})}} \), and denote by \( \{\widetilde{\phi}^n, \widetilde{\Phi}^n; \widetilde{\psi}_i^n\}_{n \geq 1} \) the solution of \eqref{coeqq1.adjoint} corresponding to the final data \( \{\widetilde{\phi}_T^n\}_{n \geq 1} \). It is easy to see that
\begin{align}\label{prolinminf}
\begin{aligned}
    \frac{J_\varepsilon^{app}(\phi_T^n)}{|\phi_T^n|_{L^2_{\mathcal{F}_T}(\Omega; \mathcal{H})}} = & \frac{|\phi_T^n|_{L^2_{\mathcal{F}_T}(\Omega; \mathcal{H})}}{2} \mathbb{E} \int_0^T \left( |D_1^* \widetilde{\phi}^n|_{\mathcal{U}_1}^2 + |D_2^* \widetilde{\Phi}^n|_{\mathcal{U}_2}^2 \right) dt \\
    &- \mathbb{E} \langle y_T, \widetilde{\phi}_T^n \rangle_{\mathcal{H}} + \mathbb{E} \langle y_0, \widetilde{\phi}^n(0) \rangle_{\mathcal{H}} + \varepsilon + \sum_{i=1}^m\alpha_i \mathbb{E} \int_0^T \langle y_{i,d}, K_i^2 \widetilde{\psi}_i^n \rangle_{\mathcal{H}} dt.
\end{aligned}
\end{align}
This implies that
\begin{align}\label{ineqqforcoerjvare}
\begin{aligned}
    \frac{J_\varepsilon^{app}(\phi_T^n)}{|\phi_T^n|_{L^2_{\mathcal{F}_T}(\Omega; \mathcal{H})}} \geq & \frac{|\phi_T^n|_{L^2_{\mathcal{F}_T}(\Omega; \mathcal{H})}}{2} \mathbb{E} \int_0^T \left( |D_1^* \widetilde{\phi}^n|_{\mathcal{U}_1}^2 + |D_2^* \widetilde{\Phi}^n|_{\mathcal{U}_2}^2 \right) dt \\
    & - C \left( |y_0|_{L^2_{\mathcal{F}_0}(\Omega; \mathcal{H})} + |y_T|_{L^2_{\mathcal{F}_T}(\Omega; \mathcal{H})} + \sum_{i=1}^m |K_i y_{i,d}|_{L^2_{\mathcal{F}}(0,T; \mathcal{H})} \right) + \varepsilon.
\end{aligned}
\end{align}
If $\liminf_{n \to +\infty} \mathbb{E} \int_0^T \left( |D_1^* \widetilde{\phi}^n|_{\mathcal{U}_1}^2 + |D_2^* \widetilde{\Phi}^n|_{\mathcal{U}_2}^2 \right) dt > 0,$ then, by \eqref{ineqqforcoerjvare}, one has that
\[
\frac{J_\varepsilon^{app}(\phi_T^n)}{|\phi_T^n|_{L^2_{\mathcal{F}_T}(\Omega; \mathcal{H})}} \longrightarrow +\infty \quad \text{as} \quad n \to +\infty.
\]
If $\liminf_{n \to +\infty} \mathbb{E} \int_0^T \left( |D_1^* \widetilde{\phi}^n|_{\mathcal{U}_1}^2 + |D_2^* \widetilde{\Phi}^n|_{\mathcal{U}_2}^2 \right) dt = 0$. Since \( |\widetilde{\phi}_T^n|_{L^2_{\mathcal{F}_T}(\Omega; \mathcal{H})} = 1 \), one can extract a subsequence of \( \{\widetilde{\phi}_T^n\}_{n \geq 1} \) (still denoted by the same symbol) such that 
\[
\widetilde{\phi}_T^n \longrightarrow \widetilde{\phi}_T \quad \text{in} \quad L^2_{\mathcal{F}_T}(\Omega; \mathcal{H}).
\]
Let us denote by \( (\widetilde{\phi}, \widetilde{\Phi}; \widetilde{\psi}_i) \) the solution of \eqref{coeqq1.adjoint} associated with the final state \( \widetilde{\phi}_T \). By the definition of the weak solution of the system \eqref{coeqq1.adjoint}, the corresponding solution \( (\widetilde{\phi}^n, \widetilde{\Phi}^n; \widetilde{\psi}_i^n) \) (to the final state \( \widetilde{\phi}_T^n \)) converges to \( (\widetilde{\phi}, \widetilde{\Phi}; \widetilde{\psi}_i) \) in \( L^2_\mathcal{F}(0,T; \mathcal{H}) \). Therefore,
\[
\mathbb{E} \int_0^T \left( |D_1^* \widetilde{\phi}|_{\mathcal{U}_1}^2 + |D_2^* \widetilde{\Phi}|_{\mathcal{U}_2}^2 \right) dt \leq \liminf_{n \to +\infty} \mathbb{E} \int_0^T \left( |D_1^* \widetilde{\phi}^n|_{\mathcal{U}_1}^2 + |D_2^* \widetilde{\Phi}^n|_{\mathcal{U}_2}^2 \right) dt.
\]
Hence, \( D_1^* \widetilde{\phi} = D_2^* \widetilde{\Phi} = 0 \), and since the unique continuation property \eqref{observainuniqc} holds, we deduce that \( \widetilde{\phi}_T = 0 \), i.e., \( (\widetilde{\phi}, \widetilde{\Phi}; \widetilde{\psi}_i) = (0, 0; 0) \). Then, from \eqref{prolinminf}, it follows that
\[
\liminf_{n \to +\infty} \frac{J_\varepsilon^{app}(\widetilde{\phi}_T^n)}{|\widetilde{\phi}_T^n|_{L^2_{\mathcal{F}_T}(\Omega; \mathcal{H})}} \geq \varepsilon,
\]
which implies that $\liminf_{n \to +\infty} \frac{J_\varepsilon^{app}(\phi_T^n)}{|\phi_T^n|_{L^2_{\mathcal{F}_T}(\Omega; \mathcal{H})}} \geq \varepsilon.$ Therefore, the functional \( J_\varepsilon^{app} \) is coercive, and hence it admits a unique minimum \( \phi_T^\varepsilon \in L^2_{\mathcal{F}_T}(\Omega; \mathcal{H}) \). If \( \phi_T^\varepsilon \neq 0 \), by the Euler equation, for all \( \phi_T \in L^2_{\mathcal{F}_T}(\Omega; \mathcal{H}) \), we have that
\begin{align}\label{idenfronjvareapp}
    \begin{aligned}
        & \mathbb{E} \int_0^T \langle D_1^* \phi^\varepsilon, D_1^* \phi \rangle_{\mathcal{U}_1} \, dt + \mathbb{E} \int_0^T \langle D_2^* \Phi^\varepsilon, D_2^* \Phi \rangle_{\mathcal{U}_2} \, dt - \mathbb{E} \langle y_T, \phi_T \rangle_{\mathcal{H}} \\
        & + \mathbb{E} \langle y_0, \phi(0) \rangle_{\mathcal{H}} + \varepsilon \frac{\langle \phi_T^\varepsilon, \phi_T \rangle_{L^2_{\mathcal{F}_T}(\Omega; \mathcal{H})}}{|\phi_T^\varepsilon|_{L^2_{\mathcal{F}_T}(\Omega; \mathcal{H})}} + \sum_{i=1}^m \alpha_i \mathbb{E} \int_0^T \langle y_{i,d}, K_i^2 \psi_i \rangle_{\mathcal{H}} \, dt = 0,
    \end{aligned}
\end{align}
where \( (\phi^\varepsilon, \Phi^\varepsilon; \psi_i^\varepsilon) \) is the solution of \eqref{coeqq1.adjoint} associated with the final state \( \phi_T^\varepsilon \).

Taking the controls \( (u_1^\varepsilon, u_2^\varepsilon) = (D_1^* \phi^\varepsilon, D_2^* \Phi^\varepsilon) \) in \eqref{dulirelat}, we have that
\begin{align}\label{dulirelatexactapp}
\begin{aligned}
&-\mathbb{E} \langle y_\varepsilon(T), \phi_T \rangle_{\mathcal{H}} + \mathbb{E} \langle y_0, \phi(0) \rangle_{\mathcal{H}} + \mathbb{E} \int_0^T \langle D_1^* \phi^\varepsilon, D_1^* \phi \rangle_{\mathcal{U}_1} \, dt \\
& + \mathbb{E} \int_0^T \langle D_2^* \Phi^\varepsilon, D_2^* \Phi \rangle_{\mathcal{U}_2} \, dt + \sum_{i=1}^m \alpha_i \mathbb{E} \int_0^T \langle y_{i,d}, K_i^2 \psi_i \rangle_{\mathcal{H}} \, dt = 0,
\end{aligned}
\end{align}
where \( (y_\varepsilon; z_i^\varepsilon, Z_i^\varepsilon) \) is the solution of \eqref{coeqq1.1abscon} associated with the controls \( (u_1^\varepsilon, u_2^\varepsilon) \). Combining \eqref{idenfronjvareapp} and \eqref{dulirelatexactapp}, we deduce that
\[
\mathbb{E} \langle y_\varepsilon(T) - y_T, \phi_T \rangle_{\mathcal{H}} + \varepsilon \frac{\langle \phi_T^\varepsilon, \phi_T \rangle_{L^2_{\mathcal{F}_T}(\Omega; \mathcal{H})}}{|\phi_T^\varepsilon|_{L^2_{\mathcal{F}_T}(\Omega; \mathcal{H})}} = 0, \quad \forall \phi_T \in L^2_{\mathcal{F}_T}(\Omega; \mathcal{H}),
\]
which leads to
\begin{align}\label{apprforexactconapp}
    |y_\varepsilon(T) - y_T|_{L^2_{\mathcal{F}_T}(\Omega; \mathcal{H})} \leq \varepsilon.
\end{align}
The inequality \eqref{apprforexactconapp} still holds when \( \phi_T^\varepsilon = 0 \). This concludes the proof of Theorem \ref{thmmapp43app}.
\end{proof}

\subsection{Stackelberg-Nash controllability for the equation \eqref{asteqq1.1back}}\label{subsec3back}
The characterization in \eqref{charaofviback} allows us to reduce the Stackelberg-Nash controllability problem for the equation \eqref{asteqq1.1back} to the analysis of the controllability of the following coupled backward-forward stochastic system
\begin{equation}\label{coeqq1.1absconback}
\begin{cases}
\begin{array}{l}
dy = \left[-\mathcal{A}y+F(t)y+G(t)Y+D_1u_1-\sum_{i=1}^m\frac{1}{\beta_i}B_iR_i^{-1}B_i^* z_i\right]dt + (Y+D_2u_2) dW(t)\quad \textnormal{in} \,\,[0,T),\\
dz_i = \left[\mathcal{A}^*z_i-F(t)^*z_i-\alpha_i K_i^2(y-y_{i,d})\right]dt +\left[-G(t)^*z_i-\widetilde{\alpha}_i \widetilde{K}_i^2(Y-Y_{i,d})\right] dW(t)\quad \textnormal{in} \,\,(0,T],\\
y(T)=y_T, \quad z_i(0)=0,\quad i=1,2,\dots,m.
\end{array}
\end{cases}
\end{equation}
We now introduce different notions of controllability for the system \eqref{coeqq1.1absconback}.
\begin{df}
\begin{enumerate}[label=\textbf{\arabic*)}]
\item System \eqref{coeqq1.1absconback} is exactly controllable at time $0$ if for any $y_T \in L^2_{\mathcal{F}_T}(\Omega; \mathcal{H})$, $y_0 \in L^2_{\mathcal{F}_0}(\Omega; \mathcal{H})$, and target functions $y_{i,d} \in L^{2,\rho}_{\mathcal{F}}(0,T; \mathcal{H})$ and $Y_{i,d} \in L^{2,\rho}_{\mathcal{F}}(0,T; \mathcal{L}^0_2)$, $(i=1,2,\dots,m)$, there exist leader controls $(\widehat{u}_1, \widehat{u}_2) \in \mathcal{U}_T$ minimizing the functional $J$ such that the corresponding state $(\widehat{y}, \widehat{Y})$, given from the unique solution \( (\widehat{y}, \widehat{Y}; \widehat{z}_i) \) of \eqref{coeqq1.1absconback}, satisfies that $\widehat{y}(0) = y_0, \;\mathbb{P}\text{-a.s.}$ Moreover, the leaders $(\widehat{u}_1, \widehat{u}_2)$ can be chosen such that 
\begin{align}\label{estimm1back}
\begin{aligned}
|\widehat{u}_1|^2_{L^2_\mathcal{F}(0,T; \mathcal{U}_1)} + |\widehat{u}_2|^2_{L^2_\mathcal{F}(0,T; \mathcal{U}_2)} \leq C \bigg( &\mathbb{E} |y_T|^2_{\mathcal{H}} + \mathbb{E} |y_0|^2_{\mathcal{H}} + \sum_{i=1}^m \mathbb{E} \int_0^T \rho^{2} |K_iy_{i,d}|_{\mathcal{H}}^2 \,dt \\
&+ \sum_{i=1}^m \mathbb{E} \int_0^T \rho^{2} |\widetilde{K}_iY_{i,d}|_{\mathcal{L}^0_2}^2 \,dt \bigg),
\end{aligned}
\end{align}
for some constant $C > 0$.
\item System \eqref{coeqq1.1absconback} is null controllable at time $0$ if for every initial condition $y_T \in L^2_{\mathcal{F}_T}(\Omega; \mathcal{H})$, and target functions $y_{i,d} \in L^{2,\rho}_{\mathcal{F}}(0,T; \mathcal{H})$ and $Y_{i,d} \in L^{2,\rho}_{\mathcal{F}}(0,T; \mathcal{L}^0_2)$, $(i=1,2,\dots,m)$, there exist leader controls $(\widehat{u}_1, \widehat{u}_2) \in \mathcal{U}_T$ minimizing the functional $J$ such that the corresponding state $(\widehat{y}, \widehat{Y})$, given from the unique solution \( (\widehat{y}, \widehat{Y}; \widehat{z}_i) \) of \eqref{coeqq1.1absconback}, satisfies that $\widehat{y}(0) = 0, \;\mathbb{P}\text{-a.s.}$ Moreover, the leaders $(\widehat{u}_1, \widehat{u}_2)$ can be chosen such that
\begin{align}\label{estimm2back}
\begin{aligned}
|\widehat{u}_1|^2_{L^2_\mathcal{F}(0,T; \mathcal{U}_1)} + |\widehat{u}_2|^2_{L^2_\mathcal{F}(0,T; \mathcal{U}_2)} \leq C \bigg( &\mathbb{E} |y_T|^2_{\mathcal{H}} + \sum_{i=1}^m \mathbb{E} \int_0^T \rho^{2} |K_iy_{i,d}|_{\mathcal{H}}^2 \,dt + \sum_{i=1}^m \mathbb{E} \int_0^T \rho^{2} |\widetilde{K}_iY_{i,d}|_{\mathcal{L}^0_2}^2 \,dt \bigg),
\end{aligned}
\end{align}
for some constant $C > 0$.
\item System \eqref{coeqq1.1absconback} is approximately controllable at time $0$ if for any $y_T \in L^2_{\mathcal{F}_T}(\Omega; \mathcal{H})$, $y_0 \in L^2_{\mathcal{F}_0}(\Omega; \mathcal{H})$, $\varepsilon > 0$ and target functions $y_{i,d} \in L^{2}_{\mathcal{F}}(0,T; \mathcal{H})$ and $Y_{i,d} \in L^{2}_{\mathcal{F}}(0,T; \mathcal{L}^0_2)$, $(i=1,2,\dots,m)$, there exist leader controls $(\widehat{u}_1, \widehat{u}_2) \in \mathcal{U}_T$ minimizing the functional $J$ such that the corresponding state $(\widehat{y}, \widehat{Y})$, given from the unique solution \( (\widehat{y}, \widehat{Y}; \widehat{z}_i) \) of \eqref{coeqq1.1absconback} satisfies that $\mathbb{E} |\widehat{y}(0) - y_0|_{\mathcal{H}}^2 \leq \varepsilon.$
\end{enumerate}
\end{df}

By the classical duality argument, to study the controllability of \eqref{coeqq1.1absconback}, we introduce the following adjoint forward-backward system
\begin{equation}\label{coeqq1.adjointback}
\begin{cases}
\begin{array}{l}
d\phi = \left[\mathcal{A}^*\phi-F(t)^*\phi+\sum_{i=1}^m\alpha_i K_i^2\psi_i\right]dt +\left[-G(t)^*\phi+\sum_{i=1}^m\widetilde{\alpha}_i \widetilde{K}_i^2\Psi_i\right] dW(t)\quad \textnormal{in} \,\,(0,T],\\
d\psi_i = \left[-\mathcal{A}\psi_i+F(t)\psi_i+G(t)\Psi_i+\frac{1}{\beta_i}B_iR_i^{-1}B^*_i\phi\right]dt + \Psi_i dW(t)\quad \textnormal{in} \,\,[0,T),\\
\phi(0)=\phi_0, \quad \psi_i(T)=0,\quad i=1,2,\dots,m.
\end{array}
\end{cases}
\end{equation}

Similarly to the proof of Proposition \ref{propp4.1dual}, we show the following duality relation between the solutions of the systems \eqref{coeqq1.1absconback} and \eqref{coeqq1.adjointback}.
\begin{prop}
The solutions of \eqref{coeqq1.1absconback} and \eqref{coeqq1.adjointback} satisfy that for all $y_T \in L^2_{\mathcal{F}_T}(\Omega; \mathcal{H})$ and $\phi_0 \in L^2_{\mathcal{F}_0}(\Omega; \mathcal{H})$, we have that
\begin{align}\label{dulirelatback}
    \begin{aligned}
\mathbb{E}\langle y_T,\phi(T)\rangle_{\mathcal{H}} - \mathbb{E}\langle y(0),\phi_0\rangle_{\mathcal{H}} = &\, \mathbb{E}\int_0^T \langle u_1, D_1^*\phi \rangle_{\mathcal{U}_1} dt + \mathbb{E}\int_0^T \left\langle u_2, D_2^*\left[\sum_{i=1}^m \widetilde{\alpha}_i \widetilde{K}_i^2 \Psi_i - G(t)^*\phi \right]\right\rangle_{\mathcal{U}_2} dt \\
& + \sum_{i=1}^m \alpha_i \mathbb{E}\int_0^T \langle y_{i,d}, K_i^2 \psi_i \rangle_{\mathcal{H}} dt + \sum_{i=1}^m \widetilde{\alpha}_i \mathbb{E}\int_0^T \langle Y_{i,d}, \widetilde{K}_i^2 \Psi_i \rangle_{\mathcal{L}^0_2} dt.
\end{aligned}
\end{align}
\end{prop}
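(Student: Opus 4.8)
The plan is to mirror the proof of Proposition \ref{propp4.1dual} verbatim in structure, replacing the two It\^o computations by their analogues adapted to the forward/backward roles of the four processes. Concretely, I would apply It\^o's formula to the pairing $d\langle y,\phi\rangle_{\mathcal{H}}$ between the backward state $y$ of \eqref{coeqq1.1absconback} and the forward adjoint state $\phi$ of \eqref{coeqq1.adjointback}, and separately to $\sum_{i=1}^m d\langle z_i,\psi_i\rangle_{\mathcal{H}}$ between the forward variable $z_i$ and the backward adjoint variable $\psi_i$. Integrating over $(0,T)$ and taking expectations removes the stochastic integral terms, leaving in each case a drift integral together with the It\^o correction (quadratic covariation) generated by the diffusion coefficients, which is evaluated in the Hilbert--Schmidt inner product of $\mathcal{L}^0_2$.

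For the first pairing the boundary terms produce $\mathbb{E}\langle y_T,\phi(T)\rangle_{\mathcal{H}}-\mathbb{E}\langle y(0),\phi_0\rangle_{\mathcal{H}}$, using $y(T)=y_T$ and $\phi(0)=\phi_0$; for the second, since $z_i(0)=0$ and $\psi_i(T)=0$, all boundary terms vanish and the associated drift-plus-covariation integral equals zero. The core of the argument is to verify the cancellations: the unbounded contributions $\langle-\mathcal{A}y,\phi\rangle+\langle y,\mathcal{A}^*\phi\rangle$ cancel by definition of $\mathcal{A}^*$; the zeroth-order $F$-terms cancel since $\langle F(t)y,\phi\rangle=\langle y,F(t)^*\phi\rangle$; and the $G$-terms cancel by pairing the drift contribution $\langle G(t)Y,\phi\rangle_{\mathcal{H}}=\langle Y,G(t)^*\phi\rangle_{\mathcal{L}^0_2}$ against the covariation contribution $\langle Y,-G(t)^*\phi\rangle_{\mathcal{L}^0_2}$, using that here $G(t)\in\mathcal{L}(\mathcal{L}^0_2;\mathcal{H})$ has genuine adjoint $G(t)^*$. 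The identical bookkeeping in the second pairing eliminates the $\mathcal{A}$, $F$, and $G$ terms, so only coupling terms survive.

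After these cancellations the first identity retains the control terms $\langle u_1,D_1^*\phi\rangle_{\mathcal{U}_1}$ and $\langle u_2,D_2^*[\sum_i\widetilde{\alpha}_i\widetilde{K}_i^2\Psi_i-G(t)^*\phi]\rangle_{\mathcal{U}_2}$ (after moving $D_1,D_2$ onto their adjoints), together with the three coupling terms $-\sum_i\frac{1}{\beta_i}\langle B_iR_i^{-1}B_i^*z_i,\phi\rangle_{\mathcal{H}}$, $\sum_i\alpha_i\langle y,K_i^2\psi_i\rangle_{\mathcal{H}}$, and $\sum_i\widetilde{\alpha}_i\langle Y,\widetilde{K}_i^2\Psi_i\rangle_{\mathcal{L}^0_2}$. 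The second (zero-valued) identity contains exactly the negatives of these three coupling terms---using self-adjointness of $K_i$, $\widetilde{K}_i$, and of $B_iR_i^{-1}B_i^*$---plus the residual data terms $\sum_i\alpha_i\langle y_{i,d},K_i^2\psi_i\rangle_{\mathcal{H}}$ and $\sum_i\widetilde{\alpha}_i\langle Y_{i,d},\widetilde{K}_i^2\Psi_i\rangle_{\mathcal{L}^0_2}$. Adding the two identities cancels all coupling terms and delivers \eqref{dulirelatback}. The only delicate point is keeping each It\^o correction in the correct $\mathcal{L}^0_2$-pairing and checking that the $G$-coupling between drift and diffusion cancels consistently across both formulas; beyond this the computation is routine.
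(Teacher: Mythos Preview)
Your proposal is correct and follows exactly the approach indicated by the paper, which defers the argument to Proposition~\ref{propp4.1dual}: apply It\^o's formula to $d\langle y,\phi\rangle_{\mathcal{H}}$ and to $\sum_{i=1}^m d\langle z_i,\psi_i\rangle_{\mathcal{H}}$, then add the resulting identities so that the three coupling terms cancel. The bookkeeping you describe (cancellation of the $\mathcal{A}$, $F$, and $G$ contributions, the $\mathcal{L}^0_2$-pairing for the It\^o corrections, and self-adjointness of $K_i$, $\widetilde{K}_i$, $B_iR_i^{-1}B_i^*$) is precisely what is needed.
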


We have the following characterization of the exact controllability of \eqref{coeqq1.1absconback}.
\begin{thm}\label{thmm4.1exback}
System \eqref{coeqq1.1absconback} is exactly controllable at time $0$ if and only if there exists a constant $C>0$ such that the solutions of \eqref{coeqq1.adjointback} satisfy the following (exact) observability inequality:
\begin{align}\label{observaineqexaback}
\begin{aligned}
&\,\mathbb{E}|\phi_0|^2_{\mathcal{H}} + \sum_{i=1}^m \mathbb{E} \int_0^T  \rho^{-2}|K_i\psi_i|_{\mathcal{H}}^2 dt + \sum_{i=1}^m \mathbb{E} \int_0^T  \rho^{-2}|\widetilde{K}_i\Psi_i|_{\mathcal{L}^0_2}^2 dt \\
&\leq C \mathbb{E} \int_0^T \left(|D_1^*\phi|_{\mathcal{U}_1}^2 + \left|D_2^*\left(\sum_{i=1}^m\widetilde{\alpha}_i\widetilde{K}_i^2\Psi_i-G(t)^*\phi\right)\right|_{\mathcal{U}_2}^2 \right) dt.
\end{aligned}
\end{align}
\end{thm}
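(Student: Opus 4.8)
The plan is to follow the proof of Theorem~\ref{thmm4.1ex} line by line, replacing the backward--forward adjoint system by the forward--backward system \eqref{coeqq1.adjointback} and using the duality relation \eqref{dulirelatback} in place of \eqref{dulirelat}. The one genuinely new feature is the composite observation operator $D_2^*\big[\sum_{i=1}^m\widetilde{\alpha}_i\widetilde{K}_i^2\Psi_i-G(t)^*\phi\big]$, which encodes the fact that the second leader now acts through the diffusion term of \eqref{coeqq1.1absconback}. Accordingly, the candidate second control will be built from both the minimizing adjoint state $\widehat{\phi}$ and the diffusion parts $\widehat{\Psi}_i$ of the $\psi_i$-components, namely $\widehat{u}_2=D_2^*\big[\sum_{i=1}^m\widetilde{\alpha}_i\widetilde{K}_i^2\widehat{\Psi}_i-G(t)^*\widehat{\phi}\big]$.

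For the necessity part, I would start from \eqref{dulirelatback}, invoke the exact controllability property $\widehat{y}(0)=y_0$, and solve for the leader observation on the right-hand side. Bounding that term by Young's inequality with a parameter $\tau>0$ and applying the control estimate \eqref{estimm1back}, I would then make the admissible choices (by \eqref{esttforrho0}) $y_T\equiv 0$, $y_0\equiv -\phi_0$, $y_{i,d}\equiv -\frac{1}{\alpha_i}\rho^{-2}\psi_i$ and $Y_{i,d}\equiv -\frac{1}{\widetilde{\alpha}_i}\rho^{-2}\Psi_i$. These produce precisely the three quantities $\mathbb{E}|\phi_0|^2_{\mathcal{H}}$, $\sum_{i=1}^m\mathbb{E}\int_0^T\rho^{-2}|K_i\psi_i|^2_{\mathcal{H}}\,dt$ and $\sum_{i=1}^m\mathbb{E}\int_0^T\rho^{-2}|\widetilde{K}_i\Psi_i|^2_{\mathcal{L}^0_2}\,dt$ on the left, while the control estimate reproduces the same quantities (with factors $\alpha_i^{-2}$, $\widetilde{\alpha}_i^{-2}$) on the right; choosing $\tau$ small enough to absorb the $C\tau(\cdots)$ contribution yields \eqref{observaineqexaback}.

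For the sufficiency part, I would introduce on $L^2_{\mathcal{F}_0}(\Omega;\mathcal{H})$ the functional
\[
J^{ext}(\phi_0)=\frac{1}{2}\mathbb{E}\int_0^T\Big(|D_1^*\phi|^2_{\mathcal{U}_1}+\big|D_2^*\big[\textstyle\sum_{i=1}^m\widetilde{\alpha}_i\widetilde{K}_i^2\Psi_i-G(t)^*\phi\big]\big|^2_{\mathcal{U}_2}\Big)dt+\mathbb{E}\langle y_0,\phi_0\rangle_{\mathcal{H}}-\mathbb{E}\langle y_T,\phi(T)\rangle_{\mathcal{H}}+\sum_{i=1}^m\alpha_i\mathbb{E}\int_0^T\langle y_{i,d},K_i^2\psi_i\rangle_{\mathcal{H}}dt+\sum_{i=1}^m\widetilde{\alpha}_i\mathbb{E}\int_0^T\langle Y_{i,d},\widetilde{K}_i^2\Psi_i\rangle_{\mathcal{L}^0_2}dt,
\]
where $(\phi,\Psi_i;\psi_i)$ is the solution of \eqref{coeqq1.adjointback} with initial datum $\phi_0$. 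Continuity and strict convexity are immediate, and coercivity follows from \eqref{observaineqexaback} after bounding the linear terms by Young's inequality, exactly as in Theorem~\ref{thmm4.1ex}. The direct method then gives a unique minimizer $\widehat{\phi}_0$, whose Euler equation (vanishing Gateaux derivative) I would compare with \eqref{dulirelatback} evaluated at the controls $\widehat{u}_1=D_1^*\widehat{\phi}$, $\widehat{u}_2=D_2^*\big[\sum_{i=1}^m\widetilde{\alpha}_i\widetilde{K}_i^2\widehat{\Psi}_i-G(t)^*\widehat{\phi}\big]$; subtracting the two identities leaves $\mathbb{E}\langle\widehat{y}(0)-y_0,\phi_0\rangle_{\mathcal{H}}=0$ for every $\phi_0$, hence $\widehat{y}(0)=y_0$, and the estimate \eqref{estimm1back} then follows by taking $\phi_0=\widehat{\phi}_0$ in the Euler equation and applying \eqref{observaineqexaback} once more. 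I expect the main point requiring care, rather than a deep difficulty, to be the correct matching of the composite control $\widehat{u}_2$ with its observation term in \eqref{dulirelatback}: one must check that with this choice the quadratic part of $J^{ext}$ coincides exactly with the second-leader contribution in the duality identity, so that the cancellation producing $\widehat{y}(0)=y_0$ goes through; the remainder is a faithful transcription of the forward argument.
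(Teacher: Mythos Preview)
Your proposal is correct and follows essentially the same line as the paper's proof: necessity via the duality relation \eqref{dulirelatback} combined with \eqref{estimm1back} and the specific choices of $y_T,y_0,y_{i,d},Y_{i,d}$ (your sign convention $y_0\equiv-\phi_0$, $y_{i,d}\equiv-\tfrac{1}{\alpha_i}\rho^{-2}\psi_i$, $Y_{i,d}\equiv-\tfrac{1}{\widetilde{\alpha}_i}\rho^{-2}\Psi_i$ differs from the paper's $y_0\equiv\phi_0$ etc.\ only by an overall sign in the rearranged identity and yields the same inequality), and sufficiency via minimization of the very functional $J^{ext}$ the paper uses, with the composite control $\widehat{u}_2=D_2^*\big[\sum_{i=1}^m\widetilde{\alpha}_i\widetilde{K}_i^2\widehat{\Psi}_i-G(t)^*\widehat{\phi}\big]$. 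The matching of this $\widehat{u}_2$ with the second observation term in \eqref{dulirelatback}, which you flag as the one point needing care, is exactly what makes the subtraction yield $\mathbb{E}\langle\widehat{y}(0)-y_0,\phi_0\rangle_{\mathcal{H}}=0$, just as in the paper.
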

\begin{proof}
We assume that the system \eqref{coeqq1.1absconback} is exactly controllable at time $0$. Using \eqref{dulirelatback} and \eqref{estimm1back}, it is easy to see that for any $\tau>0$,
\begin{align}\label{ineqqobsnullnull}
    \begin{aligned}
&\,\mathbb{E}\langle y_0,\phi_0\rangle_{\mathcal{H}} + \sum_{i=1}^m \alpha_i \mathbb{E}\int_0^T \langle y_{i,d},K_i^2\psi_i\rangle_{\mathcal{H}} dt + \sum_{i=1}^m \widetilde{\alpha}_i \mathbb{E}\int_0^T \langle Y_{i,d},\widetilde{K}_i^2\Psi_i\rangle_{\mathcal{L}^0_2} dt \\
&\leq C\tau \bigg(\mathbb{E} |y_T|^2_{\mathcal{H}} + \mathbb{E} |y_0|^2_{\mathcal{H}} + \sum_{i=1}^m \mathbb{E} \int_0^T  \rho^{2}|K_iy_{i,d}|_{\mathcal{H}}^2 \,dt + \sum_{i=1}^m \mathbb{E} \int_0^T  \rho^{2}|\widetilde{K}_i Y_{i,d}|_{\mathcal{L}^0_2}^2 \,dt \bigg) \\
&\quad + \frac{1}{4\tau}\left( |D_1^*\phi|_{L^2_\mathcal{F}(0,T;\mathcal{U}_1)}^2 + \left| D_2^*\left( \sum_{i=1}^m \widetilde{\alpha}_i \widetilde{K}_i^2 \Psi_i - G(t)^*\phi \right) \right|_{L^2_\mathcal{F}(0,T;\mathcal{U}_2)}^2 \right) \\
&\quad + \mathbb{E}\langle y_T,\phi(T)\rangle_{\mathcal{H}}.
\end{aligned}
\end{align}
Choosing $y_T \equiv 0$, $y_0 \equiv \phi_0$, $y_{i,d} \equiv \frac{1}{\alpha_i} \rho^{-2} \psi_i$, and $Y_{i,d} \equiv \frac{1}{\widetilde{\alpha}_i} \rho^{-2} \Psi_i$ in \eqref{ineqqobsnullnull}, we get
\begin{align}\label{ineqqobs2nullnu}
    \begin{aligned}
&\,\mathbb{E}|\phi_0|^2_{\mathcal{H}} + \sum_{i=1}^m \mathbb{E}\int_0^T \rho^{-2} |K_i \psi_i|^2_{\mathcal{H}} dt + \sum_{i=1}^m \mathbb{E}\int_0^T \rho^{-2} |\widetilde{K}_i \Psi_i|^2_{\mathcal{L}^0_2} dt \\
&\leq C\tau \left( \mathbb{E} |\phi_0|^2_{\mathcal{H}} + \sum_{i=1}^m \frac{1}{\alpha_i^2} \mathbb{E} \int_0^T \rho^{-2} |K_i \psi_i|_{\mathcal{H}}^2 \,dt + \sum_{i=1}^m \frac{1}{\widetilde{\alpha}_i^2} \mathbb{E} \int_0^T \rho^{-2} |\widetilde{K}_i \Psi_i|_{\mathcal{L}^0_2}^2 \,dt \right) \\
&\quad + \frac{1}{4\tau} \left( |D_1^*\phi|_{L^2_\mathcal{F}(0,T;\mathcal{U}_1)}^2 + \left| D_2^*\left( \sum_{i=1}^m \widetilde{\alpha}_i \widetilde{K}_i^2 \Psi_i - G(t)^*\phi \right) \right|_{L^2_\mathcal{F}(0,T;\mathcal{U}_2)}^2 \right).
\end{aligned}
\end{align}
Taking a small enough $\tau$ in \eqref{ineqqobs2nullnu}, we obtain the observability inequality \eqref{observaineqexaback}.

Let us now suppose that the estimate \eqref{observaineqexaback} holds. Let $y_T \in L^2_{\mathcal{F}_T}(\Omega; \mathcal{H})$, $y_0 \in L^2_{\mathcal{F}_0}(\Omega; \mathcal{H})$, $y_{i,d} \in L^{2,\rho}_{\mathcal{F}}(0,T; \mathcal{H})$, and $Y_{i,d} \in L^{2,\rho}_{\mathcal{F}}(0,T; \mathcal{L}^0_2)$. We define the following functional on $L^2_{\mathcal{F}_0}(\Omega; \mathcal{H})$ as:
\begin{align*}
J^{ext}(\phi_0) = & \frac{1}{2} \mathbb{E} \int_0^T \left( |D_1^*\phi|_{\mathcal{U}_1}^2 + \left| D_2^* \left( \sum_{i=1}^m \widetilde{\alpha}_i \widetilde{K}_i^2 \Psi_i - G(t)^*\phi \right) \right|_{\mathcal{U}_2}^2 \right) dt \\
&-\mathbb{E} \langle y_T, \phi(T) \rangle_{\mathcal{H}}+ \mathbb{E} \langle y_0, \phi_0 \rangle_{\mathcal{H}}  + \sum_{i=1}^m \alpha_i\mathbb{E} \int_0^T \langle y_{i,d}, K_i^2 \psi_i \rangle_{\mathcal{H}} dt\\
& + \sum_{i=1}^m \widetilde{\alpha}_i\mathbb{E} \int_0^T \langle Y_{i,d}, \widetilde{K}_i^2 \Psi_i \rangle_{\mathcal{L}^0_2} dt.
\end{align*}
It is easy to verify that $J^{ext}: L^2_{\mathcal{F}_0}(\Omega; \mathcal{H}) \longrightarrow \mathbb{R}$ is continuous and strictly convex. Proceeding as in the proof of Theorem \ref{thmm4.1ex}, by applying the observability inequality \eqref{observaineqexaback}, we prove that the functional $J^{ext}$ is coercive. Thus, it admits a unique minimum, denoted by $\widehat{\phi}_0 \in L^2_{\mathcal{F}_0}(\Omega; \mathcal{H})$. Consequently, the Euler equation implies that for all $\phi_0 \in L^2_{\mathcal{F}_0}(\Omega; \mathcal{H})$,
\begin{align}\label{idenfronjvareback}
    \begin{aligned}
&\mathbb{E}\int_0^T \langle D_1^*\widehat{\phi}, D_1^*\phi \rangle_{\mathcal{U}_1} dt + \mathbb{E}\int_0^T \left\langle D_2^*\left( \sum_{i=1}^m \widetilde{\alpha}_i \widetilde{K}_i^2 \widehat{\Psi}_i - G(t)^*\widehat{\phi} \right), D_2^*\left( \sum_{i=1}^m \widetilde{\alpha}_i \widetilde{K}_i^2 \Psi_i - G(t)^*\phi \right) \right\rangle_{\mathcal{U}_2} dt \\
&-\mathbb{E}\langle y_T, \phi(T) \rangle_{\mathcal{H}} + \mathbb{E}\langle y_0, \phi_0 \rangle_{\mathcal{H}} + \sum_{i=1}^m \alpha_i \mathbb{E}\int_0^T \langle y_{i,d}, K_i^2 \psi_i \rangle_{\mathcal{H}} dt \\
&+ \sum_{i=1}^m \widetilde{\alpha}_i \mathbb{E}\int_0^T \langle Y_{i,d}, \widetilde{K}_i^2 \Psi_i \rangle_{\mathcal{L}^0_2} dt = 0,
    \end{aligned}
\end{align}
where $(\widehat{\phi}; \widehat{\psi}_i, \widehat{\Psi}_i)$ is the solution of \eqref{coeqq1.adjointback} associated with the initial state $\widehat{\phi}_0$.

We choose the controls $(\widehat{u}_1, \widehat{u}_2) = \left( D_1^* \widehat{\phi}, D_2^* \left( \sum_{i=1}^m \widetilde{\alpha}_i \widetilde{K}_i^2 \widehat{\Psi}_i - G(t)^* \widehat{\phi} \right) \right)$ in \eqref{dulirelatback}, we have that
\begin{align}\label{dulirelatexactback}
\begin{aligned}
&-\mathbb{E}\langle y_T, \phi(T) \rangle_{\mathcal{H}} + \mathbb{E} \langle \widehat{y}(0), \phi_0 \rangle_{\mathcal{H}} + \mathbb{E} \int_0^T \langle D_1^* \widehat{\phi}, D_1^* \phi \rangle_{\mathcal{U}_1} \, dt \\
&+ \mathbb{E} \int_0^T \left\langle D_2^* \left( \sum_{i=1}^m \widetilde{\alpha}_i \widetilde{K}_i^2 \widehat{\Psi}_i - G(t)^* \widehat{\phi} \right), D_2^* \left( \sum_{i=1}^m \widetilde{\alpha}_i \widetilde{K}_i^2 \Psi_i - G(t)^* \phi \right) \right\rangle_{\mathcal{U}_2} \, dt \\
&+ \sum_{i=1}^m \alpha_i \mathbb{E} \int_0^T \langle y_{i,d}, K_i^2 \psi_i \rangle_{\mathcal{H}} \, dt + \sum_{i=1}^m \widetilde{\alpha}_i \mathbb{E} \int_0^T \langle Y_{i,d}, \widetilde{K}_i^2 \Psi_i \rangle_{\mathcal{L}^0_2} \, dt = 0,
\end{aligned}
\end{align}
where $(\widehat{y}, \widehat{Y}; \widehat{z}_i)$ is the solution of \eqref{coeqq1.1absconback} associated with the controls $(\widehat{u}_1, \widehat{u}_2)$.  By combining \eqref{idenfronjvareback} and \eqref{dulirelatexactback}, we deduce that
\begin{align*}
    \mathbb{E} \langle \widehat{y}(0) - y_0, \phi_0 \rangle_{\mathcal{H}} = 0, \quad \forall \, \phi_0 \in L^2_{\mathcal{F}_0}(\Omega; \mathcal{H}),
\end{align*}
which leads to
\begin{align*}
\widehat{y}(0) = y_0, \quad \text{in} \, L^2_{\mathcal{F}_0}(\Omega; \mathcal{H}).
\end{align*}
On the other hand, taking $\phi_0 = \widehat{\phi}_0$ in \eqref{idenfronjvareback}, and using the observability inequality \eqref{observaineqexaback}, we deduce that
\begin{align*}
\begin{aligned}
|\widehat{u}_1|^2_{L^2_\mathcal{F}(0,T; \mathcal{U}_1)} + |\widehat{u}_2|^2_{L^2_\mathcal{F}(0,T; \mathcal{U}_2)} \leq C \bigg( &\mathbb{E} |y_T|^2_{\mathcal{H}} + \mathbb{E} |y_0|^2_{\mathcal{H}} + \sum_{i=1}^m \mathbb{E} \int_0^T \rho^2 |K_i y_{i,d}|_{\mathcal{H}}^2 \, dt \\
&+ \sum_{i=1}^m \mathbb{E} \int_0^T \rho^2 |\widetilde{K}_i Y_{i,d}|_{\mathcal{L}^0_2}^2 \, dt \bigg).
\end{aligned}
\end{align*}
This completes the proof of Theorem \ref{thmm4.1exback}.
\end{proof}

For the null controllability, we have the following characterization.
\begin{thm}\label{thmm4.2exback}
System \eqref{coeqq1.1absconback} is null controllable at time $0$ if and only if there exists a constant $C > 0$ such that the solutions of \eqref{coeqq1.adjointback} satisfy the following (null) observability inequality:
\begin{align}\label{observaineqnuuback}
\begin{aligned}
&\mathbb{E}|\phi(T)|^2_{\mathcal{H}} + \sum_{i=1}^m \mathbb{E} \int_0^T \rho^{-2} |K_i\psi_i|_{\mathcal{H}}^2 \, dt + \sum_{i=1}^m \mathbb{E} \int_0^T \rho^{-2}|\widetilde{K}_i\Psi_i|_{\mathcal{L}^0_2}^2 \, dt \\
&\leq C \mathbb{E} \int_0^T \left(|D_1^*\phi|_{\mathcal{U}_1}^2 + \left|D_2^*\left(\sum_{i=1}^m\widetilde{\alpha}_i\widetilde{K}_i^2\Psi_i-G(t)^*\phi\right)\right|_{\mathcal{U}_2}^2 \right) \, dt.
\end{aligned}
\end{align}
\end{thm}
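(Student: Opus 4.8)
The plan is to prove the equivalence by the classical duality method, transposing the argument of the forward null controllability Theorem \ref{thmm4.2ex} to the present backward–forward setting, with the duality identity \eqref{dulirelatback} serving as the bridge between the control problem for \eqref{coeqq1.1absconback} and the observability of the adjoint system \eqref{coeqq1.adjointback}. The proof separates into the two implications.

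For the necessity (null controllability $\Rightarrow$ observability), I would start from \eqref{dulirelatback} with $\widehat{y}(0)=0$, which turns it into an identity equating
$\mathbb{E}\langle y_T,\phi(T)\rangle_{\mathcal H}-\sum_{i}\alpha_i\mathbb{E}\int_0^T\langle y_{i,d},K_i^2\psi_i\rangle_{\mathcal H}\,dt-\sum_{i}\widetilde\alpha_i\mathbb{E}\int_0^T\langle Y_{i,d},\widetilde K_i^2\Psi_i\rangle_{\mathcal L^0_2}\,dt$
with the sum of the two control pairings. Applying Young's inequality with a parameter $\tau>0$ and then the uniform control estimate \eqref{estimm2back}, one bounds this quantity above by $C\tau\big(\mathbb{E}|y_T|^2_{\mathcal H}+\sum_i\mathbb{E}\int_0^T\rho^2|K_iy_{i,d}|^2_{\mathcal H}\,dt+\sum_i\mathbb{E}\int_0^T\rho^2|\widetilde K_iY_{i,d}|^2_{\mathcal L^0_2}\,dt\big)$ plus $\tfrac{1}{4\tau}$ times the control observation $\mathbb{E}\int_0^T\big(|D_1^*\phi|^2_{\mathcal U_1}+|D_2^*(\sum_i\widetilde\alpha_i\widetilde K_i^2\Psi_i-G(t)^*\phi)|^2_{\mathcal U_2}\big)\,dt$. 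The decisive step is then to choose the data $y_T\equiv\phi(T)$, $y_{i,d}\equiv-\tfrac{1}{\alpha_i}\rho^{-2}\psi_i$ and $Y_{i,d}\equiv-\tfrac{1}{\widetilde\alpha_i}\rho^{-2}\Psi_i$ (admissible by \eqref{esttforrho0}): the left-hand side becomes exactly the observability norm $\mathbb{E}|\phi(T)|^2_{\mathcal H}+\sum_i\mathbb{E}\int_0^T\rho^{-2}|K_i\psi_i|^2_{\mathcal H}\,dt+\sum_i\mathbb{E}\int_0^T\rho^{-2}|\widetilde K_i\Psi_i|^2_{\mathcal L^0_2}\,dt$, while the $C\tau$ term reproduces the same quantity weighted by $1$, $\alpha_i^{-2}$ and $\widetilde\alpha_i^{-2}$; choosing $\tau$ small enough to absorb it on the left yields \eqref{observaineqnuuback}.

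For the sufficiency (observability $\Rightarrow$ null controllability), I would introduce, for each $\varepsilon>0$, the functional on $L^2_{\mathcal F_0}(\Omega;\mathcal H)$
\[
\begin{aligned}
J^{nul}_\varepsilon(\phi_0) = {}& \frac{1}{2}\mathbb{E}\int_0^T\left(|D_1^*\phi|_{\mathcal U_1}^2 + \left|D_2^*\left(\sum_{i=1}^m\widetilde\alpha_i\widetilde K_i^2\Psi_i - G(t)^*\phi\right)\right|_{\mathcal U_2}^2\right)dt - \mathbb{E}\langle y_T,\phi(T)\rangle_{\mathcal H} \\
& + \varepsilon|\phi_0|_{L^2_{\mathcal F_0}(\Omega;\mathcal H)} + \sum_{i=1}^m\alpha_i\mathbb{E}\int_0^T\langle y_{i,d},K_i^2\psi_i\rangle_{\mathcal H}\,dt + \sum_{i=1}^m\widetilde\alpha_i\mathbb{E}\int_0^T\langle Y_{i,d},\widetilde K_i^2\Psi_i\rangle_{\mathcal L^0_2}\,dt,
\end{aligned}
\]
where $(\phi,\psi_i;\Psi_i)$ solves \eqref{coeqq1.adjointback} with datum $\phi_0$. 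This functional is continuous and strictly convex; coercivity follows from \eqref{observaineqnuuback} together with the regularizing term $\varepsilon|\phi_0|$. Here one should note a structural point specific to the null case: since \eqref{observaineqnuuback} controls $\mathbb{E}|\phi(T)|^2_{\mathcal H}$ rather than $\mathbb{E}|\phi_0|^2_{\mathcal H}$, it is precisely the term $\varepsilon|\phi_0|$ that provides coercivity in the minimization variable $\phi_0$. Hence $J^{nul}_\varepsilon$ admits a unique minimizer $\phi_0^\varepsilon$; its Euler equation, combined with \eqref{dulirelatback} evaluated at the controls $(u_1^\varepsilon,u_2^\varepsilon)=\big(D_1^*\phi^\varepsilon,\,D_2^*(\sum_i\widetilde\alpha_i\widetilde K_i^2\Psi_i^\varepsilon-G(t)^*\phi^\varepsilon)\big)$, yields the approximate null property $|y_\varepsilon(0)|_{L^2_{\mathcal F_0}(\Omega;\mathcal H)}\le\varepsilon$, and testing the Euler equation against $\phi_0^\varepsilon$ and invoking \eqref{observaineqnuuback} gives the uniform bound \eqref{estimm2back} for $(u_1^\varepsilon,u_2^\varepsilon)$. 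The degenerate case $\phi_0^\varepsilon=0$ is handled via the one-sided derivative inequality, as in \cite{FabPuZuaz95}.

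Finally, the uniform bound \eqref{estimm2back} lets me extract a subsequence of $(u_1^\varepsilon,u_2^\varepsilon)$ converging weakly in $\mathcal U_T$ to some $(\widehat u_1,\widehat u_2)$; by continuous dependence of the solution of \eqref{coeqq1.1absconback} on the controls, $y_\varepsilon(0)\rightharpoonup\widehat y(0)$ weakly in $L^2_{\mathcal F_0}(\Omega;\mathcal H)$, so passing to the limit in $|y_\varepsilon(0)|\le\varepsilon$ gives $\widehat y(0)=0$, $\mathbb P$-a.s., with \eqref{estimm2back} preserved by weak lower semicontinuity. I expect the main obstacle to be the bookkeeping around the composite control $u_2^\varepsilon=D_2^*(\sum_i\widetilde\alpha_i\widetilde K_i^2\Psi_i^\varepsilon-G(t)^*\phi^\varepsilon)$: one must check that the Euler equation of $J^{nul}_\varepsilon$ reproduces exactly the $u_2$-pairing of \eqref{dulirelatback} and that the quadratic part of $J^{nul}_\varepsilon$ coincides with the right-hand side of \eqref{observaineqnuuback}, since it is at this matching that the two additional target families $y_{i,d},Y_{i,d}$ and the diffusion-leader structure could introduce a sign or structural mismatch. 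Once this is verified, the remaining steps are parallel to the forward null case of Theorem \ref{thmm4.2ex}.
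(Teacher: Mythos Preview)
Your proposal is correct and follows essentially the same route as the paper's own proof: both directions use the duality identity \eqref{dulirelatback}, the same special choices $y_T\equiv\phi(T)$, $y_{i,d}\equiv-\tfrac{1}{\alpha_i}\rho^{-2}\psi_i$, $Y_{i,d}\equiv-\tfrac{1}{\widetilde\alpha_i}\rho^{-2}\Psi_i$ for necessity, and the same penalized functional $J^{nul}_\varepsilon$, Euler equation, and weak-limit argument for sufficiency. Your remark that coercivity in $\phi_0$ ultimately relies on the $\varepsilon|\phi_0|$ term (since \eqref{observaineqnuuback} only controls $\phi(T)$) makes explicit a point the paper leaves implicit.
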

\begin{proof}
Assume that the system \eqref{coeqq1.1absconback} is null controllable at time $0$. From \eqref{dulirelatback} and \eqref{estimm2back}, it is easy to see that for any $\tau > 0$,
\begin{align}\label{ineqqobsback}
    \begin{aligned}
&\,\mathbb{E}\langle y_T, \phi(T)\rangle_{\mathcal{H}} - \sum_{i=1}^m \alpha_i \mathbb{E} \int_0^T \langle y_{i,d}, K_i^2 \psi_i \rangle_{\mathcal{H}} \, dt - \sum_{i=1}^m \widetilde{\alpha}_i \mathbb{E} \int_0^T \langle Y_{i,d}, \widetilde{K}_i^2 \Psi_i \rangle_{\mathcal{L}^0_2} \, dt \\
&\leq C \tau \bigg( \mathbb{E} |y_T|^2_{\mathcal{H}} + \sum_{i=1}^m \mathbb{E} \int_0^T \rho^2 |K_i y_{i,d}|_{\mathcal{H}}^2 \, dt + \sum_{i=1}^m \mathbb{E} \int_0^T \rho^2 |\widetilde{K}_i Y_{i,d}|_{\mathcal{L}^0_2}^2 \, dt \bigg) \\
&\quad + \frac{1}{4\tau} \left( |D_1^* \phi|_{L^2_\mathcal{F}(0,T;\mathcal{U}_1)}^2 + \left| D_2^* \left( \sum_{i=1}^m \widetilde{\alpha}_i \widetilde{K}_i^2 \Psi_i - G(t)^* \phi \right) \right|_{L^2_\mathcal{F}(0,T;\mathcal{U}_2)}^2 \right).
\end{aligned}
\end{align}
Set $y_T \equiv \phi(T)$, $y_{i,d} \equiv -\frac{1}{\alpha_i} \rho^{-2} \psi_i$ and $Y_{i,d} \equiv -\frac{1}{\widetilde{\alpha}_i} \rho^{-2} \Psi_i$ in \eqref{ineqqobsback}, we get
\begin{align}\label{ineqqobs2nullsecnu}
    \begin{aligned}
&\, \mathbb{E} |\phi(T)|^2_{\mathcal{H}} + \sum_{i=1}^m \mathbb{E} \int_0^T \rho^{-2} |K_i \psi_i|^2_{\mathcal{H}} \, dt + \sum_{i=1}^m \mathbb{E} \int_0^T \rho^{-2} |\widetilde{K}_i \Psi_i|^2_{\mathcal{L}^0_2} \, dt \\
&\leq C \tau \left( \mathbb{E} |\phi(T)|^2_{\mathcal{H}} + \sum_{i=1}^m \frac{1}{\alpha_i^2} \mathbb{E} \int_0^T \rho^{-2} |K_i \psi_i|_{\mathcal{H}}^2 \, dt + \sum_{i=1}^m \frac{1}{\widetilde{\alpha}_i^2} \mathbb{E} \int_0^T \rho^{-2} |\widetilde{K}_i \Psi_i|_{\mathcal{L}^0_2}^2 \, dt \right) \\
&\quad + \frac{1}{4\tau} \left( |D_1^* \phi|_{L^2_\mathcal{F}(0,T;\mathcal{U}_1)}^2 + \left| D_2^* \left( \sum_{i=1}^m \widetilde{\alpha}_i \widetilde{K}_i^2 \Psi_i - G(t)^* \phi \right) \right|_{L^2_\mathcal{F}(0,T;\mathcal{U}_2)}^2 \right).
\end{aligned}
\end{align}
By choosing a sufficiently small $\tau$ in \eqref{ineqqobs2nullsecnu}, we obtain the observability inequality \eqref{observaineqnuuback}.

Let us now suppose that the inequality \eqref{observaineqnuuback} holds. Let $y_T \in L^2_{\mathcal{F}_T}(\Omega; \mathcal{H})$, $y_{i,d} \in L^{2,\rho}_{\mathcal{F}}(0,T; \mathcal{H})$, $Y_{i,d} \in L^{2,\rho}_{\mathcal{F}}(0,T; \mathcal{L}^0_2)$, and $\varepsilon > 0$. We define the following functional on $L^2_{\mathcal{F}_0}(\Omega; \mathcal{H})$ as follows:
\begin{align*}
J^{nul}_\varepsilon(\phi_0) = & \frac{1}{2} \mathbb{E} \int_0^T \left( |D_1^* \phi|_{\mathcal{U}_1}^2 + \left| D_2^* \left( \sum_{i=1}^m \widetilde{\alpha}_i \widetilde{K}_i^2 \Psi_i - G(t)^* \phi \right) \right|_{\mathcal{U}_2}^2 \right) dt \\
& - \mathbb{E} \langle y_T, \phi(T) \rangle_{\mathcal{H}} + \varepsilon |\phi_0|_{L^2_{\mathcal{F}_0}(\Omega; \mathcal{H})} + \sum_{i=1}^m \alpha_i \mathbb{E} \int_0^T \langle y_{i,d}, K_i^2 \psi_i \rangle_{\mathcal{H}} dt \\
& + \sum_{i=1}^m \widetilde{\alpha}_i \mathbb{E} \int_0^T \langle Y_{i,d}, \widetilde{K}_i^2 \Psi_i \rangle_{\mathcal{L}^0_2} dt.
\end{align*}
It is easy to verify that $J^{nul}_\varepsilon: L^2_{\mathcal{F}_0}(\Omega; \mathcal{H}) \to \mathbb{R}$ is continuous and strictly convex. Applying the observability inequality \eqref{observaineqnuuback}, it follows that the functional $J^{nul}_\varepsilon$ is also coercive. Therefore, $J^{nul}_\varepsilon$ admits a unique minimum $\phi_0^\varepsilon \in L^2_{\mathcal{F}_0}(\Omega; \mathcal{H})$. If $\phi_0^\varepsilon \neq 0$, by the Euler equation, we deduce that for all $\phi_0 \in L^2_{\mathcal{F}_0}(\Omega; \mathcal{H})$,
\begin{align}\label{idenfronjvarenullback}
    \begin{aligned}
& \mathbb{E} \int_0^T \langle D_1^* \phi^\varepsilon, D_1^* \phi \rangle_{\mathcal{U}_1} dt + \mathbb{E} \int_0^T \left\langle D_2^* \left( \sum_{i=1}^m \widetilde{\alpha}_i \widetilde{K}_i^2 \Psi_i^\varepsilon - G(t)^* \phi^\varepsilon \right), D_2^* \left( \sum_{i=1}^m \widetilde{\alpha}_i \widetilde{K}_i^2 \Psi_i - G(t)^* \phi \right) \right\rangle_{\mathcal{U}_2} dt \\
& - \mathbb{E} \langle y_T, \phi(T) \rangle_{\mathcal{H}} + \varepsilon \frac{\langle \phi_0^\varepsilon, \phi_0 \rangle_{L^2_{\mathcal{F}_0}(\Omega; \mathcal{H})}}{|\phi_0^\varepsilon|_{L^2_{\mathcal{F}_0}(\Omega; \mathcal{H})}} + \sum_{i=1}^m \alpha_i \mathbb{E} \int_0^T \langle y_{i,d}, K_i^2 \psi_i \rangle_{\mathcal{H}} dt \\
& + \sum_{i=1}^m \widetilde{\alpha}_i \mathbb{E} \int_0^T \langle Y_{i,d}, \widetilde{K}_i^2 \Psi_i \rangle_{\mathcal{L}^0_2} dt = 0,
    \end{aligned}
\end{align}
where $(\phi^\varepsilon; \psi_i^\varepsilon, \Psi_i^\varepsilon)$ is the solution of \eqref{coeqq1.adjointback} associated with the initial state $\phi_0^\varepsilon$.

We now choose the controls \( (u_1^\varepsilon, u_2^\varepsilon) = \left( D_1^* \phi^\varepsilon, D_2^* \left( \sum_{i=1}^m \widetilde{\alpha}_i \widetilde{K}_i^2 \Psi_i^\varepsilon - G(t)^* \phi^\varepsilon \right) \right) \) in \eqref{dulirelatback}. We then have the following:
\begin{align}\label{dulirelatnullback}
\begin{aligned}
& - \mathbb{E} \langle y_T, \phi(T) \rangle_{\mathcal{H}} + \mathbb{E} \langle y_\varepsilon(0), \phi_0 \rangle_{\mathcal{H}} + \mathbb{E} \int_0^T \langle D_1^* \phi^\varepsilon, D_1^* \phi \rangle_{\mathcal{U}_1} \, dt \\
& + \mathbb{E} \int_0^T \left\langle D_2^* \left( \sum_{i=1}^m \widetilde{\alpha}_i \widetilde{K}_i^2 \Psi_i^\varepsilon - G(t)^* \phi^\varepsilon \right), D_2^* \left( \sum_{i=1}^m \widetilde{\alpha}_i \widetilde{K}_i^2 \Psi_i - G(t)^* \phi \right) \right\rangle_{\mathcal{U}_2} \, dt \\
& + \sum_{i=1}^m \alpha_i \mathbb{E} \int_0^T \langle y_{i,d}, K_i^2 \psi_i \rangle_{\mathcal{H}} \, dt + \sum_{i=1}^m \widetilde{\alpha}_i \mathbb{E} \int_0^T \langle Y_{i,d}, \widetilde{K}_i^2 \Psi_i \rangle_{\mathcal{L}^0_2} \, dt = 0,
\end{aligned}
\end{align}
where \( (y_\varepsilon, Y_\varepsilon; z_i^\varepsilon) \) is the solution of \eqref{coeqq1.1absconback} associated with the controls \( (u_1^\varepsilon, u_2^\varepsilon) \). Combining \eqref{idenfronjvarenullback} and \eqref{dulirelatnullback}, we deduce that
\[
-\mathbb{E} \langle y_\varepsilon(0), \phi_0 \rangle_{\mathcal{H}} + \varepsilon \frac{\langle \phi_0^\varepsilon, \phi_0 \rangle_{L^2_{\mathcal{F}_0}(\Omega; \mathcal{H})}}{|\phi_0^\varepsilon|_{L^2_{\mathcal{F}_0}(\Omega; \mathcal{H})}} = 0, \quad \forall \phi_0 \in L^2_{\mathcal{F}_0}(\Omega; \mathcal{H}),
\]
which implies that
\begin{equation}\label{apprforexactconnullback}
|y_\varepsilon(0)|_{L^2_{\mathcal{F}_0}(\Omega; \mathcal{H})} \leq \varepsilon.
\end{equation}
Taking \( \phi_0 = \phi_0^\varepsilon \) in \eqref{idenfronjvarenullback}, and using the observability inequality \eqref{observaineqnuuback}, we obtain that
\begin{align}\label{estimm1forexatnullback}
\begin{aligned}
|u_1^\varepsilon|^2_{L^2_\mathcal{F}(0,T; \mathcal{U}_1)} + |u_2^\varepsilon|^2_{L^2_\mathcal{F}(0,T; \mathcal{U}_2)} \leq C \Bigg( & \mathbb{E} |y_T|^2_{\mathcal{H}} + \sum_{i=1}^m \mathbb{E} \int_0^T \rho^2 |K_i y_{i,d}|_{\mathcal{H}}^2 \, dt \\
& + \sum_{i=1}^m \mathbb{E} \int_0^T \rho^2 | \widetilde{K}_i Y_{i,d} |_{\mathcal{L}^0_2}^2 \, dt \Bigg).
\end{aligned}
\end{align}
If \( \phi_0^\varepsilon = 0 \), it is easy to see that the inequalities \eqref{apprforexactconnullback} and \eqref{estimm1forexatnullback} hold. Therefore, by \eqref{estimm1forexatnullback}, we deduce the existence of a subsequence \( (u_1^\varepsilon, u_2^\varepsilon) \) such that, as \( \varepsilon \to 0 \),
\begin{align}\label{weakconvrnullback}
\begin{aligned}
u_1^\varepsilon & \longrightarrow \widehat{u}_1 \quad \text{weakly in} \, L^2((0,T) \times \Omega; \mathcal{U}_1), \\
u_2^\varepsilon & \longrightarrow \widehat{u}_2 \quad \text{weakly in} \, L^2((0,T) \times \Omega; \mathcal{U}_2).
\end{aligned}
\end{align}
Thus, we deduce that
\begin{equation}\label{Eq56nullback}
y_\varepsilon(0) \longrightarrow \widehat{y}(0) \quad \text{weakly in} \, L^2_{\mathcal{F}_0}(\Omega; \mathcal{H}), \quad \text{as} \, \varepsilon \to 0,
\end{equation}
where the state \( (\widehat{y}, \widehat{Y}) \) is the one given by the unique solution \( (\widehat{y}, \widehat{Y}; \widehat{z}_i) \) of the system \eqref{coeqq1.1absconback} associated with the controls \( (\widehat{u}_1, \widehat{u}_2) \). Finally, combining \eqref{apprforexactconnullback} and \eqref{Eq56nullback}, we conclude that
\[
\widehat{y}(0) = 0, \quad \mathbb{P}\text{-a.s.}
\]
Moreover, the inequality \eqref{estimm2back} can be easily derived from \eqref{estimm1forexatnullback} and \eqref{weakconvrnullback}, which provides the null controllability of the system \eqref{coeqq1.1absconback}. This completes the proof of Theorem \ref{thmm4.2exback}.
\end{proof}

For the case of approximate controllability, we have the following characterization.
\begin{thm}\label{thmmapp43appback}
System \eqref{coeqq1.1absconback} is approximately controllable at time \(0\) if and only if the solutions of \eqref{coeqq1.adjointback} satisfy the following unique continuation property:
\begin{align}\label{observainuniqcback}
D_1^*\phi = D_2^*\left[\sum_{i=1}^m\widetilde{\alpha}_i\widetilde{K}_i^2\Psi_i - G(t)^*\phi\right] = 0, \quad \forall t \in (0,T), \quad \mathbb{P}\textnormal{-a.s.} \quad \Longrightarrow \quad \phi_0 = 0, \quad \mathbb{P}\textnormal{-a.s.}
\end{align}
\end{thm}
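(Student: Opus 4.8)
The plan is to mirror, step for step, the proof of Theorem \ref{thmmapp43app}, working now with the forward-backward adjoint system \eqref{coeqq1.adjointback} and with the compound diffusion observation $D_2^*\big(\sum_{i=1}^m \widetilde{\alpha}_i \widetilde{K}_i^2 \Psi_i - G(t)^*\phi\big)$ playing the role that $D_2^*\Phi$ had in the forward setting. For the necessity, I would introduce the control-to-state operator
\begin{equation*}
\mathcal{L}_0 : \mathcal{U}_T \longrightarrow L^2_{\mathcal{F}_0}(\Omega;\mathcal{H}), \qquad (u_1,u_2)\longmapsto y(0),
\end{equation*}
where $(y,Y;z_i)$ is the unique solution of \eqref{coeqq1.1absconback} with $y_T\equiv 0$, $y_{i,d}\equiv 0$ and $Y_{i,d}\equiv 0$. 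Approximate controllability at time $0$ amounts to the density of $\mathcal{R}(\mathcal{L}_0)$ in $L^2_{\mathcal{F}_0}(\Omega;\mathcal{H})$, equivalently to the injectivity of $\mathcal{L}_0^*$. Specializing the duality identity \eqref{dulirelatback} to these data yields
\begin{equation*}
\langle \mathcal{L}_0(u_1,u_2),\phi_0\rangle_{L^2_{\mathcal{F}_0}(\Omega;\mathcal{H})} = -\Big\langle (u_1,u_2),\ \Big(D_1^*\phi,\ D_2^*\big(\sum_{i=1}^m \widetilde{\alpha}_i \widetilde{K}_i^2 \Psi_i - G(t)^*\phi\big)\Big)\Big\rangle_{\mathcal{U}_T},
\end{equation*}
so that $\mathcal{L}_0^*(\phi_0) = -\big(D_1^*\phi,\ D_2^*(\sum_{i=1}^m \widetilde{\alpha}_i \widetilde{K}_i^2 \Psi_i - G(t)^*\phi)\big)$. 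Injectivity of $\mathcal{L}_0^*$ is then literally the unique continuation property \eqref{observainuniqcback}, which disposes of the ``only if'' direction.

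For the converse, assuming \eqref{observainuniqcback} and given $y_T$, $y_0$, $\varepsilon>0$, $y_{i,d}$ and $Y_{i,d}$, I would introduce on $L^2_{\mathcal{F}_0}(\Omega;\mathcal{H})$ the functional
\begin{align*}
J^{app}_\varepsilon(\phi_0) = {}& \frac{1}{2}\,\mathbb{E}\int_0^T\Big( |D_1^*\phi|^2_{\mathcal{U}_1} + \big|D_2^*\big(\sum_{i=1}^m \widetilde{\alpha}_i \widetilde{K}_i^2 \Psi_i - G(t)^*\phi\big)\big|^2_{\mathcal{U}_2}\Big)\,dt - \mathbb{E}\langle y_T,\phi(T)\rangle_{\mathcal{H}} \\
& + \mathbb{E}\langle y_0,\phi_0\rangle_{\mathcal{H}} + \varepsilon\,|\phi_0|_{L^2_{\mathcal{F}_0}(\Omega;\mathcal{H})} + \sum_{i=1}^m \alpha_i\,\mathbb{E}\int_0^T\langle y_{i,d},K_i^2\psi_i\rangle_{\mathcal{H}}\,dt + \sum_{i=1}^m \widetilde{\alpha}_i\,\mathbb{E}\int_0^T\langle Y_{i,d},\widetilde{K}_i^2\Psi_i\rangle_{\mathcal{L}^0_2}\,dt,
\end{align*}
which is manifestly continuous and strictly convex. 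The core of this direction is the coercivity estimate $\liminf_{|\phi_0|\to\infty} J^{app}_\varepsilon(\phi_0)/|\phi_0| \geq \varepsilon$. As in Theorem \ref{thmmapp43app}, I would normalize a sequence $\phi_0^n$ with $|\phi_0^n|\to\infty$, set $\widetilde{\phi}_0^n = \phi_0^n/|\phi_0^n|$, and split into two cases according to whether $\liminf_n \mathbb{E}\int_0^T\big(|D_1^*\widetilde{\phi}^n|^2 + |D_2^*(\sum_{i=1}^m \widetilde{\alpha}_i \widetilde{K}_i^2 \widetilde{\Psi}_i^n - G(t)^*\widetilde{\phi}^n)|^2\big)\,dt$ is positive or zero. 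In the former case coercivity is immediate; in the latter, I would extract a convergent subsequence $\widetilde{\phi}_0^n\to\widetilde{\phi}_0$, pass to the limit in the solution $(\widetilde{\phi},\widetilde{\psi}_i,\widetilde{\Psi}_i)$ of \eqref{coeqq1.adjointback} using its well-posedness and the weak lower semicontinuity of the norm, conclude $D_1^*\widetilde{\phi} = D_2^*(\sum_{i=1}^m \widetilde{\alpha}_i \widetilde{K}_i^2 \widetilde{\Psi}_i - G(t)^*\widetilde{\phi}) = 0$, and invoke \eqref{observainuniqcback} to get $\widetilde{\phi}_0 = 0$, which yields the bound.

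Coercivity and strict convexity then provide a unique minimizer $\phi_0^\varepsilon$. I would write its Euler equation, choose the leader controls $(u_1^\varepsilon,u_2^\varepsilon) = \big(D_1^*\phi^\varepsilon,\ D_2^*(\sum_{i=1}^m \widetilde{\alpha}_i \widetilde{K}_i^2 \Psi_i^\varepsilon - G(t)^*\phi^\varepsilon)\big)$, substitute them into \eqref{dulirelatback}, and subtract the two identities. The control integrals together with the $\mathbb{E}\langle y_T,\phi(T)\rangle$, $y_{i,d}$ and $Y_{i,d}$ terms cancel, leaving
\begin{equation*}
-\,\mathbb{E}\langle y_\varepsilon(0)-y_0,\phi_0\rangle_{\mathcal{H}} + \varepsilon\,\frac{\langle\phi_0^\varepsilon,\phi_0\rangle_{L^2_{\mathcal{F}_0}(\Omega;\mathcal{H})}}{|\phi_0^\varepsilon|_{L^2_{\mathcal{F}_0}(\Omega;\mathcal{H})}} = 0, \qquad \forall\,\phi_0\in L^2_{\mathcal{F}_0}(\Omega;\mathcal{H}),
\end{equation*}
whence $|y_\varepsilon(0)-y_0|_{L^2_{\mathcal{F}_0}(\Omega;\mathcal{H})}\leq\varepsilon$; the degenerate case $\phi_0^\varepsilon=0$ is treated as in Theorem \ref{thmmapp43app} via the one-sided subdifferential inequality, taking the null controls. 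The only genuine obstacle, relative to the forward statement, lies in the compound second observation $D_2^*(\sum_{i=1}^m \widetilde{\alpha}_i \widetilde{K}_i^2 \Psi_i - G(t)^*\phi)$: one must check that the diffusion parts $\Psi_i\in L^2_{\mathcal{F}}(0,T;\mathcal{L}^0_2)$ of \eqref{coeqq1.adjointback} pass to the limit together with $\phi$ in the coercivity step, and that this very same combination is what simultaneously defines $\mathcal{L}_0^*$, enters the functional, and is selected as $u_2^\varepsilon$, so that the cancellations against \eqref{dulirelatback} close exactly.
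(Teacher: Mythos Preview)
Your proposal is correct and follows essentially the same approach as the paper's own proof: the necessity via the operator $\mathcal{L}_0$ and its adjoint computed from \eqref{dulirelatback}, and the sufficiency via the penalized functional $J^{app}_\varepsilon$, its coercivity through the normalization-and-unique-continuation argument of Theorem~\ref{thmmapp43app}, and the Euler equation combined with the duality identity. The paper handles the coercivity step by simply invoking the analogy with Theorem~\ref{thmmapp43app}, whereas you explicitly flag the need to pass $\Psi_i$ to the limit alongside $\phi$; this is a fair technical observation but does not constitute a different route.
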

\begin{proof}
Assume that \eqref{coeqq1.1absconback} is approximately controllable at time \(0\), and define the following operator:
\begin{equation*}
\mathcal{L}_0: \mathcal{U}_T \longrightarrow L^2_{\mathcal{F}_0}(\Omega; \mathcal{H}), \quad (u_1, u_2) \longmapsto y(0),
\end{equation*}
where the state \((y,Y)\) is given by the unique solution \((y,Y;z_i)\) of the system \eqref{coeqq1.1absconback} with \(y_T \equiv 0\) and \(y_{i,d} \equiv Y_{i,d} \equiv 0\) (\(i = 1, 2, \dots, m\)).  Then, it is easy to see that \(\mathcal{R}(\mathcal{L}_0)\) is dense in \(L^2_{\mathcal{F}_0}(\Omega; \mathcal{H})\), which is equivalent to the adjoint operator \(\mathcal{L}_0^*\) being injective. On the other hand, from \eqref{dulirelatback}, it is straightforward to deduce that
\[
\langle \mathcal{L}_0(u_1, u_2), \phi_0 \rangle_{L^2_{\mathcal{F}_0}(\Omega; \mathcal{H})} = \left\langle (u_1, u_2), \left( -D_1^*\phi, -D_2^*\left[ \sum_{i=1}^m \widetilde{\alpha}_i \widetilde{K}_i^2 \Psi_i - G(t)^*\phi \right] \right) \right\rangle_{\mathcal{U}_T},
\]
which implies that \(\mathcal{L}_0^*(\phi_0) = \left( -D_1^*\phi, -D_2^*\left[ \sum_{i=1}^m \widetilde{\alpha}_i \widetilde{K}_i^2 \Psi_i - G(t)^*\phi \right] \right)\). Therefore, the unique continuation property \eqref{observainuniqcback} holds.

We now suppose that the property \eqref{observainuniqcback} holds. Let \( y_T \in L^2_{\mathcal{F}_T}(\Omega; \mathcal{H}) \), \( y_0 \in L^2_{\mathcal{F}_0}(\Omega; \mathcal{H}) \), \( y_{i,d} \in L^{2}_{\mathcal{F}}(0,T; \mathcal{H}) \), \( Y_{i,d} \in L^{2}_{\mathcal{F}}(0,T; \mathcal{L}^0_2) \), and \( \varepsilon > 0 \), we define the following functional on \( L^2_{\mathcal{F}_0}(\Omega; \mathcal{H}) \) as follows:
\begin{align*}
J^{app}_\varepsilon(\phi_0) = & \frac{1}{2} \mathbb{E} \int_0^T \left( |D_1^* \phi|_{\mathcal{U}_1}^2 + \left| D_2^* \left( \sum_{i=1}^m \widetilde{\alpha}_i \widetilde{K}_i^2 \Psi_i - G(t)^* \phi \right) \right|_{\mathcal{U}_2}^2 \right) dt- \mathbb{E} \langle y_T, \phi(T) \rangle_{\mathcal{H}} + \mathbb{E} \langle y_0, \phi_0 \rangle_{\mathcal{H}} \\
& + \varepsilon | \phi_0 |_{L^2_{\mathcal{F}_0}(\Omega; \mathcal{H})} + \sum_{i=1}^m \alpha_i \mathbb{E} \int_0^T \langle y_{i,d}, K_i^2 \psi_i \rangle_{\mathcal{H}} dt + \sum_{i=1}^m \widetilde{\alpha}_i \mathbb{E} \int_0^T \langle Y_{i,d}, \widetilde{K}_i^2 \Psi_i \rangle_{\mathcal{L}^0_2} dt.
\end{align*}
It is easy to see that \( J^{app}_\varepsilon: L^2_{\mathcal{F}_0}(\Omega; \mathcal{H}) \longrightarrow \mathbb{R} \) is continuous and strictly convex. Similarly to the proof of Theorem \ref{thmmapp43app}, we show that the functional \( J_\varepsilon^{app} \) is coercive in \( L^2_{\mathcal{F}_0}(\Omega; \mathcal{H}) \). Therefore, it admits a unique minimum \( \phi_0^\varepsilon \in L^2_{\mathcal{F}_0}(\Omega; \mathcal{H}) \).  If \( \phi_0^\varepsilon \neq 0 \), by the Euler equation: for all \( \phi_0 \in L^2_{\mathcal{F}_0}(\Omega; \mathcal{H}) \), we have that
\begin{align}\label{idenfronjvareappbbackapp}
  \begin{aligned}
    & \mathbb{E} \int_0^T \langle D_1^* \phi^\varepsilon, D_1^* \phi \rangle_{\mathcal{U}_1} dt + \mathbb{E} \int_0^T \left\langle D_2^* \left( \sum_{i=1}^m \widetilde{\alpha}_i \widetilde{K}_i^2 \Psi^\varepsilon_i - G(t)^* \phi^\varepsilon \right), D_2^* \left( \sum_{i=1}^m \widetilde{\alpha}_i \widetilde{K}_i^2 \Psi_i - G(t)^* \phi \right) \right\rangle_{\mathcal{U}_2} dt \\
    & - \mathbb{E} \langle y_T, \phi(T) \rangle_{\mathcal{H}}+ \mathbb{E} \langle y_0, \phi_0 \rangle_{\mathcal{H}} + \varepsilon \frac{\langle \phi_0^\varepsilon, \phi_0 \rangle_{L^2_{\mathcal{F}_0}(\Omega; \mathcal{H})}}{|\phi_0^\varepsilon|_{L^2_{\mathcal{F}_0}(\Omega; \mathcal{H})}} + \sum_{i=1}^m \alpha_i \mathbb{E} \int_0^T \langle y_{i,d}, K_i^2 \psi_i \rangle_{\mathcal{H}} dt \\
    & + \sum_{i=1}^m \widetilde{\alpha}_i \mathbb{E} \int_0^T \langle Y_{i,d}, \widetilde{K}_i^2 \psi_i \rangle_{\mathcal{L}^0_2} dt = 0,
  \end{aligned}
\end{align}
where \( (\phi^\varepsilon; \psi_i^\varepsilon, \Psi_i^\varepsilon) \) is the solution of \eqref{coeqq1.adjointback} associated with the initial state \( \phi_0^\varepsilon \).

We now choose the controls \( (u_1^\varepsilon, u_2^\varepsilon) = \left( D_1^* \phi^\varepsilon, D_2^* \left( \sum_{i=1}^m \widetilde{\alpha}_i \widetilde{K}_i^2 \Psi^\varepsilon_i - G(t)^* \phi^\varepsilon \right) \right) \) in \eqref{dulirelatback}, and we have that
\begin{align}\label{dulirelatexactbackapp}
\begin{aligned}
    & -\mathbb{E} \langle y_T, \phi(T) \rangle_{\mathcal{H}} + \mathbb{E} \langle y_\varepsilon(0), \phi_0 \rangle_{\mathcal{H}} + \mathbb{E} \int_0^T \langle D_1^* \phi^\varepsilon, D_1^* \phi \rangle_{\mathcal{U}_1} dt \\
    & + \mathbb{E} \int_0^T \left\langle D_2^* \left( \sum_{i=1}^m \widetilde{\alpha}_i \widetilde{K}_i^2 \Psi^\varepsilon_i - G(t)^* \phi^\varepsilon \right), D_2^* \left( \sum_{i=1}^m \widetilde{\alpha}_i \widetilde{K}_i^2 \Psi_i - G(t)^* \phi \right) \right\rangle_{\mathcal{U}_2} dt \\
    & + \sum_{i=1}^m \alpha_i \mathbb{E} \int_0^T \langle y_{i,d}, K_i^2 \psi_i \rangle_{\mathcal{H}} dt + \sum_{i=1}^m \widetilde{\alpha}_i \mathbb{E} \int_0^T \langle Y_{i,d}, \widetilde{K}_i^2 \Psi_i \rangle_{\mathcal{L}^0_2} dt = 0,
\end{aligned}
\end{align}
where \( (y_\varepsilon, Y_\varepsilon; z_i^\varepsilon) \) is the solution of \eqref{coeqq1.1absconback} associated with the controls \( (u_1^\varepsilon, u_2^\varepsilon) \). Combining \eqref{idenfronjvareappbbackapp} and \eqref{dulirelatexactbackapp}, we deduce that
\[
    \mathbb{E} \langle y_0 - y_\varepsilon(0), \phi_0 \rangle_{\mathcal{H}} + \varepsilon \frac{\langle \phi_0^\varepsilon, \phi_0 \rangle_{L^2_{\mathcal{F}_0}(\Omega; \mathcal{H})}}{|\phi_0^\varepsilon|_{L^2_{\mathcal{F}_0}(\Omega; \mathcal{H})}} = 0, \quad \forall \phi_0 \in L^2_{\mathcal{F}_0}(\Omega; \mathcal{H}).
\]
This implies that
\begin{align}\label{apprforexactconappback}
|y_\varepsilon(0) - y_0|_{L^2_{\mathcal{F}_0}(\Omega; \mathcal{H})} \leq \varepsilon.
\end{align}
The estimate \eqref{apprforexactconappback} also holds in the case where \( \phi_0^\varepsilon = 0 \). Therefore, we conclude the proof of Theorem \ref{thmmapp43appback}.
\end{proof}

\section{Illustrative examples}\label{sec55}
In this section, we study the Stackelberg-Nash null controllability problem for two types of equations: the forward and backward stochastic heat equations with Dirichlet boundary conditions. Furthermore, we provide additional details and prove the main result concerning the backward stochastic heat equation.

Let \( T > 0 \), \( \mathcal{G} \subset \mathbb{R}^N \) (\( N \geq 1 \)) be an open, bounded domain with a $C^4$ boundary \( \Gamma \), and let \( m \geq 2 \) be an integer. Let \( \mathcal{G}_0 \), \( \mathcal{G}_i \), \( \mathcal{O}_{i,d} \), and \( \widetilde{\mathcal{O}}_{i,d} \) (for \( i = 1, 2, \dots, m \)) be nonempty open subsets of \( \mathcal{G} \). We denote by \( \mathbbm{1}_{\mathscr{S}} \) the characteristic function of a subset \( \mathscr{S} \subset \mathcal{G} \). Let \( (\Omega, \mathcal{F}, \{\mathcal{F}_t\}_{t \in [0,T]}, \mathbb{P}) \) be a fixed complete filtered probability space on which a one-dimensional standard Brownian motion \( W(\cdot) \) is defined. Here, \( \{\mathcal{F}_t\}_{t \in [0,T]} \) denotes the natural filtration generated by \( W(\cdot) \), and augmented by all the \( \mathbb{P} \)-null sets in \( \mathcal{F} \). We define the following domains:
\[
Q = (0,T) \times \mathcal{G}, \quad \Sigma = (0,T) \times \Gamma, \quad \text{and} \quad Q_0 = (0,T) \times \mathcal{G}_0.
\]
Next, we introduce the following spaces:
\[
\mathcal{U}_T = L^2_\mathcal{F}(0,T; L^2(\mathcal{G}_0)) \times L^2_\mathcal{F}(0,T; L^2(\mathcal{G})),
\]
\[
\mathcal{V}_T = L^2_\mathcal{F}(0,T; L^2(\mathcal{G}_1)) \times \dots \times L^2_\mathcal{F}(0,T; L^2(\mathcal{G}_m)).
\]
\subsection{Application to the forward stochastic heat equation}
We consider the following forward stochastic heat equation
\begin{equation}\label{eqq1.1}
\begin{cases}
dy - \Delta y \, dt = \left[a_1 y + u_1 \mathbbm{1}_{\mathcal{G}_0} + \sum_{i=1}^m v_i \mathbbm{1}_{\mathcal{G}_i} \right] \, dt
+ \left[a_2 y + u_2 \right] \, dW(t) & \text{in } Q, \\
y = 0 & \text{on } \Sigma, \\
y(0) = y_0 & \text{in } \mathcal{G},
\end{cases}
\end{equation}
where \( y \) represents the state variable, \( y_0 \in L^2_{\mathcal{F}_0}(\Omega; L^2(\mathcal{G})) \) is the initial state, and \( a_1, a_2 \in L^\infty_\mathcal{F}(0,T; L^\infty(\mathcal{G})) \) are given functions. The control inputs are \( (u_1, u_2) \in \mathcal{U}_T \) for the leaders, and \( (v_1, \dots, v_m) \in \mathcal{V}_T \) for the followers. It is easy to see that the equation \eqref{eqq1.1} can be written within the framework of the abstract system \eqref{asteqq1.1} under the following conditions:
\begin{align}\label{asummmofabstset}
\left\{
\begin{array}{l}
\text{- The state space: } \mathcal{H} = L^2(\mathcal{G}). \\
\text{- The operator: } \mathcal{A} = \Delta \, \text{(Dirichlet Laplacian)}, \text{ with domain } \mathcal{D}(\mathcal{A}) = H^2(\mathcal{G}) \cap H^1_0(\mathcal{G}),\\
\text{\;\; which generates a } C_0\text{-semigroup on } L^2(\mathcal{G}).\\
\text{- The control spaces: } \mathcal{U}_1 = L^2(\mathcal{G}_0),\, \mathcal{U}_2 = L^2(\mathcal{G}), \text{ and } \mathcal{V}_i = L^2(\mathcal{G}_i) \, \text{for } i = 1, 2, \dots, m. \\
\text{- The control operators: } D_1 = \mathbbm{1}_{\mathcal{G}_0},\, D_2 = \text{Id}, \text{ and } B_i = \mathbbm{1}_{\mathcal{G}_i} \, \text{for } i = 1, 2, \dots, m. \\
\text{- The coefficients: } F(t) = a_1(t, \cdot) \text{Id} \;\, \text{and }\; G(t) = a_2(t, \cdot) \text{Id}.
\end{array}
\right.
\end{align}

For fixed \( y_{i,d}\in L^{2,\rho}_{\mathcal{F}}(0,T; L^2(\mathcal{G})) \), \( i = 1, 2, \dots, m \), representing the target functions, where the weight function \( \rho = \rho(t) \) will be defined below, we now define the following functionals:

\begin{enumerate}[(1)]
    \item For the leaders, the main functional \( J \) is given by
    \begin{equation*}
    J(u_1, u_2) = \frac{1}{2} \mathbb{E} \iint_Q \left( \mathbbm{1}_{\mathcal{G}_0} |u_1|^2 + |u_2|^2 \right) \,dx\,dt.
    \end{equation*}
    
    \item For the followers, the secondary functionals \( J^F_i \)  are defined as follows: For each \( i = 1, 2, \dots, m \),
    
\begin{align}\label{functji1FOR}
    \begin{aligned}
    J^F_i(u_1, u_2; v_1, \dots, v_m) = &\, \frac{\alpha_i}{2} \mathbb{E} \iint_{(0,T) \times \mathcal{O}_{i,d}} |y - y_{i,d}|^2 \,dx\,dt + \frac{\beta_i}{2} \mathbb{E} \iint_{(0,T) \times \mathcal{G}_i} |v_i|^2 \,dx\,dt, 
    \end{aligned}
    \end{align}
\end{enumerate}
where \( \alpha_i > 0 \), \( \beta_i \geq 1 \) are constants, and the state \( y \) is the solution of \eqref{eqq1.1}. By comparing \eqref{functji1FOR}  with our abstract setting, we choose the operators \( K_i \)  and \( R_i \) as follows:
\[
K_i = \mathbbm{1}_{\mathcal{O}_{i,d}} \quad \text{and} \quad R_i = \text{Id}.
\]

From Section \ref{sec3}, we deduce the following result on the Nash equilibrium for the functionals \( (J_1^F,\dots,J_m^F)\).
\begin{prop}\label{proppse5forw}
There exists a large \( \overline{\beta} \geq 1 \) such that, if \( \beta_i \geq \overline{\beta} \) for \( i = 1, 2, \dots, m \), then for each \( (u_1, u_2) \in \mathcal{U}_T \), there exists a unique Nash equilibrium \( (v^*_1(u_1,u_2), \dots, v^*_m(u_1,u_2)) \in \mathcal{V}_T \) for the functionals \( (J_1^F,\dots,J_m^F)\) associated with \( (u_1,u_2) \). Moreover, we have that
    \begin{align*}
    v^*_i = -\frac{1}{\beta_i} z_i \Big|_{(0,T) \times \mathcal{G}_i}, \qquad i = 1, 2, \dots, m,
    \end{align*}
    where \( (z_i, Z_i) \) is the solution of the backward equation
    \begin{equation}\label{backadj}
    \begin{cases}
    \begin{array}{ll}
    dz_i + \Delta z_i \, dt = \big[ -a_1 z_i - a_2 Z_i - \alpha_i(y - y_{i,d}) \mathbbm{1}_{\mathcal{O}_{i,d}} \big] \, dt + Z_i \, dW(t) & \quad \textnormal{in} \, Q, \\
    z_i = 0 & \quad \textnormal{on} \, \Sigma, \\
    z_i(T) = 0 & \quad \textnormal{in} \, \mathcal{G}.
    \end{array}
    \end{cases}
    \end{equation}
\end{prop}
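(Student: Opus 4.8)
The plan is to invoke the abstract results of Section \ref{sec3} directly, since \eqref{eqq1.1} is precisely the specialization of \eqref{asteqq1.1} under the dictionary \eqref{asummmofabstset}. First I would check that the structural hypotheses of Proposition \ref{propp4.1} hold for this concrete choice: with $K_i = \mathbbm{1}_{\mathcal{O}_{i,d}}$ acting as a multiplication operator on $\mathcal{H} = L^2(\mathcal{G})$, one has $K_i = K_i^* \in \mathcal{S}(\mathcal{H})$; and with $R_i = \mathrm{Id}$ on $\mathcal{V}_i = L^2(\mathcal{G}_i)$ one trivially has $R_i \in \mathcal{S}(\mathcal{V}_i)$, invertible, and coercive in the sense of \eqref{asumRi} with $r_0 = 1$. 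Hence Proposition \ref{propp4.1} yields a threshold $\overline{\beta} \geq 1$ such that, for every $\beta_i \geq \overline{\beta}$ and every fixed leader pair $(u_1,u_2) \in \mathcal{U}_T$, a unique Nash equilibrium $(v_1^*,\dots,v_m^*) \in \mathcal{V}_T$ exists. This settles the existence-and-uniqueness assertion with no additional work.

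For the explicit formula I would start from the abstract characterization \eqref{charaofvi}, namely $v_i^* = -\frac{1}{\beta_i} R_i^{-1} B_i^* z_i$, and simplify the two operators appearing there. Since $R_i = \mathrm{Id}$, we have $R_i^{-1} = \mathrm{Id}$. For $B_i^*$, note that $B_i = \mathbbm{1}_{\mathcal{G}_i} \colon L^2(\mathcal{G}_i) \to L^2(\mathcal{G})$ is the extension-by-zero operator; the computation $\langle B_i v_i, w \rangle_{L^2(\mathcal{G})} = \langle v_i, w|_{\mathcal{G}_i} \rangle_{L^2(\mathcal{G}_i)}$ shows that its adjoint is the restriction $B_i^* w = w|_{\mathcal{G}_i}$. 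Consequently $v_i^* = -\frac{1}{\beta_i}\, z_i|_{(0,T)\times\mathcal{G}_i}$, as claimed.

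It then remains to verify that the abstract adjoint backward system \eqref{backadjforjiF}, whose solution $z_i$ enters the formula above, reduces to the concrete system \eqref{backadj}. I would substitute the dictionary \eqref{asummmofabstset} into \eqref{backadjforjiF} and compute each adjoint: the Dirichlet Laplacian is self-adjoint, so $\mathcal{A}^* = \Delta$ and $-\mathcal{A}^* z_i = -\Delta z_i$ (matching the term $+\Delta z_i\,dt$ moved to the left-hand side in \eqref{backadj}); the multiplication operators are self-adjoint, giving $F(t)^* = a_1\,\mathrm{Id}$ and $G(t)^* = a_2\,\mathrm{Id}$; and since $\mathbbm{1}_{\mathcal{O}_{i,d}}$ is a projection, $K_i^2 = \mathbbm{1}_{\mathcal{O}_{i,d}}$, so the source term reads $-\alpha_i (y - y_{i,d})\mathbbm{1}_{\mathcal{O}_{i,d}}$. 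Together with the homogeneous Dirichlet boundary condition encoded in $\mathcal{D}(\mathcal{A}) = H^2(\mathcal{G}) \cap H^1_0(\mathcal{G})$ and the terminal condition $z_i(T)=0$, this is exactly \eqref{backadj}.

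The argument is entirely a matter of correctly tracking adjoints; no genuine obstacle arises. The only points requiring a moment's care are the identification $B_i^* = (\,\cdot\,)|_{\mathcal{G}_i}$ and the idempotence $K_i^2 = K_i$, both of which follow from the elementary structure of indicator-multiplication operators on $L^2(\mathcal{G})$.
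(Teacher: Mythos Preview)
Your proposal is correct and follows exactly the approach the paper itself adopts: the paper simply states that Proposition \ref{proppse5forw} is deduced ``From Section \ref{sec3}'' without spelling out the details, and your argument supplies precisely the routine verification---checking the hypotheses of Proposition \ref{propp4.1}, specializing \eqref{charaofvi} via $R_i^{-1}=\mathrm{Id}$ and $B_i^*=(\,\cdot\,)|_{\mathcal{G}_i}$, and matching \eqref{backadjforjiF} to \eqref{backadj}---that the paper leaves implicit.
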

From Proposition \ref{proppse5forw}, the Stackelberg-Nash controllability problem for the equation \eqref{eqq1.1} can be reduced to proving the null controllability for the following coupled forward-backward system
\begin{equation}\label{eqq4.7}
\begin{cases}
\begin{array}{ll}
dy - \Delta y \,dt = \left[a_1y + u_1\mathbbm{1}_{\mathcal{O}} - \sum_{i=1}^m \frac{1}{\beta_i} z_i \mathbbm{1}_{\mathcal{G}_i}\right] \,dt + \left[a_2 y + u_2\right] \,dW(t) & \quad \textnormal{in} \, Q, \\
dz_i + \Delta z_i \,dt = \left[-a_1 z_i - a_2 Z_i - \alpha_i (y - y_{i,d}) \mathbbm{1}_{\mathcal{O}_{i,d}}\right] \,dt + Z_i \,dW(t) & \quad \textnormal{in} \, Q, \\
y = 0, \,\, z_i = 0 & \quad \textnormal{on} \, \Sigma, \\
y(0) = y_0, \,\, z_i(T) = 0, \quad i = 1, 2, \dots, m, & \quad \textnormal{in} \, \mathcal{G}.
\end{array}
\end{cases}
\end{equation}
Then, such a problem is equivalent to an observability inequality for the following adjoint system
\begin{equation}\label{ADJSO1}
\begin{cases}
d\phi + \Delta \phi \,dt = \left[-a_1 \phi - a_2 \Phi + \sum_{i=1}^m \alpha_i \psi_i \mathbbm{1}_{\mathcal{O}_{i,d}}\right] \,dt + \Phi \,dW(t) & \quad \textnormal{in} \, Q, \\
d\psi_i - \Delta \psi_i \,dt = \left[a_1 \psi_i + \frac{1}{\beta_i} \mathbbm{1}_{\mathcal{G}_i} \phi\right] \,dt + a_2 \psi_i \,dW(t) & \quad \textnormal{in} \, Q, \\
\phi = 0, \,\, \psi_i = 0 & \quad \textnormal{on} \, \Sigma, \\
\phi(T) = \phi_T, \,\, \psi_i(0) = 0, \quad i = 1, 2, \dots, m, & \quad \textnormal{in} \, \mathcal{G}.
\end{cases}
\end{equation}

According to Theorem \ref{thmm4.2ex}, we have the following null controllability characterization in the sense of Stackelberg-Nash for the equation \eqref{eqq1.1}.
\begin{prop}\label{propo5.2}
Let \( \rho = \rho(t) \) be a suitable weight function, and let \( \beta_i \geq 1 \) (\( i = 1, 2, \dots, m \)) be sufficiently large. The following assertions are equivalent:
\begin{enumerate}
    \item System \eqref{eqq1.1} is optimally null controllable at time \( T \) in the sense of Stackelberg-Nash.
    \item The solution \( (\phi, \Phi; \psi_i) \) of the adjoint system \eqref{ADJSO1} satisfies the observability inequality:
    \begin{align}\label{observaineq}
    \mathbb{E} |\phi(0)|^2_{L^2(\mathcal{G})} + \sum_{i=1}^m \mathbb{E} \int_0^T \int_{\mathcal{O}_{i,d}} \rho^{-2} |\psi_i|^2 \, dx \, dt \leq C \left[ \mathbb{E} \iint_{Q_0} \phi^2 \, dx \, dt + \mathbb{E} \iint_Q \Phi^2 \, dx \, dt \right].
    \end{align}
\end{enumerate}
\end{prop}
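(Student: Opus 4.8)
The plan is to obtain Proposition \ref{propo5.2} as a direct specialization of the abstract equivalence established in Theorem \ref{thmm4.2ex}. The first step is to confirm that the forward stochastic heat equation \eqref{eqq1.1} is exactly the realization of the abstract system \eqref{asteqq1.1} under the dictionary \eqref{asummmofabstset}: the state space is $\mathcal{H}=L^2(\mathcal{G})$, the generator $\mathcal{A}=\Delta$ with homogeneous Dirichlet boundary conditions generates a $C_0$-semigroup on $L^2(\mathcal{G})$, the coefficients $F,G$ are the multiplication operators by $a_1,a_2$, and the control operators are $D_1=\mathbbm{1}_{\mathcal{G}_0}$, $D_2=\mathrm{Id}$, $B_i=\mathbbm{1}_{\mathcal{G}_i}$. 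Likewise, the choices $K_i=\mathbbm{1}_{\mathcal{O}_{i,d}}\in\mathcal{S}(L^2(\mathcal{G}))$ and $R_i=\mathrm{Id}\in\mathcal{S}(L^2(\mathcal{G}_i))$ (which is trivially invertible and satisfies \eqref{asumRi} with $r_0=1$) put the secondary functionals \eqref{functji1FOR} into the abstract form \eqref{functji1sec}. With these identifications, Proposition \ref{proppse5forw} (itself the specialization of the characterization \eqref{charaofvi}) guarantees that, for $\beta_i$ large, the Stackelberg-Nash null controllability of \eqref{eqq1.1} is equivalent to the null controllability of the coupled forward-backward system \eqref{eqq4.7}, which is the concrete form of \eqref{coeqq1.1abscon}.

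Next I would invoke Theorem \ref{thmm4.2ex}, which asserts that \eqref{coeqq1.1abscon} is null controllable at time $T$ if and only if its adjoint \eqref{coeqq1.adjoint} satisfies the observability inequality \eqref{observaineqnuu}. Under the dictionary above, \eqref{coeqq1.adjoint} is precisely the concrete adjoint system \eqref{ADJSO1}. The remaining task is then purely to translate \eqref{observaineqnuu} into \eqref{observaineq}. Computing the relevant adjoints gives $D_1^*\phi=\phi|_{\mathcal{G}_0}$ (the restriction, since $D_1$ is extension by zero) and $D_2^*\Phi=\Phi$, so that $|D_1^*\phi|^2_{\mathcal{U}_1}=\int_{\mathcal{G}_0}\phi^2\,dx$ and $|D_2^*\Phi|^2_{\mathcal{U}_2}=\int_{\mathcal{G}}\Phi^2\,dx$; on the left-hand side $|K_i\psi_i|^2_{\mathcal{H}}=\int_{\mathcal{O}_{i,d}}\psi_i^2\,dx$. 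Substituting these expressions and writing the integrals over $Q_0=(0,T)\times\mathcal{G}_0$ and $Q=(0,T)\times\mathcal{G}$ turns \eqref{observaineqnuu} verbatim into \eqref{observaineq}, establishing the claimed equivalence for any admissible weight $\rho$ (one satisfying \eqref{esttforrho0}).

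The proof therefore involves no new analysis beyond the abstract theory, and the only points requiring care are bookkeeping ones: checking that all the structural hypotheses of the abstract results hold in the concrete setting — in particular that $a_1,a_2\in L^\infty_\mathcal{F}(0,T;L^\infty(\mathcal{G}))$ yield $F\in L^\infty_\mathcal{F}(0,T;\mathcal{L}(\mathcal{H}))$ and $G\in L^\infty_\mathcal{F}(0,T;\mathcal{L}(\mathcal{H};\mathcal{L}^0_2))$, using that here $V=\mathbb{R}$ and hence $\mathcal{L}^0_2\cong L^2(\mathcal{G})$ — and correctly identifying the adjoint operators so that the abstract observability inequality reads off as \eqref{observaineq}. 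I do not expect any genuine obstacle at this stage; the substantive mathematical content, namely actually proving that \eqref{observaineq} holds for a suitable Carleman weight $\rho$, lies in the subsequent Carleman-estimate argument rather than in the equivalence itself.
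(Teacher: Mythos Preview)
Your proposal is correct and matches the paper's approach exactly: the paper simply states that Proposition \ref{propo5.2} follows from Theorem \ref{thmm4.2ex} under the identifications \eqref{asummmofabstset}, and your write-up spells out precisely the bookkeeping (adjoint computations $D_1^*\phi=\phi|_{\mathcal{G}_0}$, $D_2^*\Phi=\Phi$, $|K_i\psi_i|^2_{\mathcal{H}}=\int_{\mathcal{O}_{i,d}}\psi_i^2\,dx$) needed to translate \eqref{observaineqnuu} into \eqref{observaineq}.
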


Using the method of Carleman estimates (see \cite{stakNahSPEs}), we can prove the observability inequality \eqref{observaineq} under certain assumptions on the observation regions \( \mathcal{O}_{i,d} \) and the weight function \( \rho \). The following result then holds.
\begin{prop}\label{Pro4.2}
Assume that \( \mathcal{O}_{i,d} = \mathcal{O}_d \), \( \mathcal{O}_d \cap \mathcal{G}_0 \neq \emptyset \), and that \( \beta_i \geq 1 \) are large enough, \( i = 1, 2, \dots, m \). Then, there exist a constant \( C > 0 \) and a positive weight function \( \rho = \rho(t) \), which blows up as \( t \to T \), such that for any \( \phi_T \in L^2_{\mathcal{F}_T}(\Omega; L^2(\mathcal{G})) \), the solution \( (\phi, \Phi; \psi_i) \) of \eqref{ADJSO1} satisfies the observability inequality:
\begin{align*}
\mathbb{E} |\phi(0)|^2_{L^2(\mathcal{G})} + \sum_{i=1}^m \mathbb{E} \iint_Q \rho^{-2} |\psi_i|^2 \, dx \, dt \leq C \left[ \mathbb{E} \iint_{Q_0} \phi^2 \, dx \, dt + \mathbb{E} \iint_Q \Phi^2 \, dx \, dt \right].
\end{align*}
\end{prop}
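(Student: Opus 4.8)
The plan is to prove the stated observability inequality for \eqref{ADJSO1} by the method of global Carleman estimates, exploiting that $(\phi,\Phi)$ solves a \emph{backward} stochastic heat equation while each $\psi_i$ solves a \emph{forward} one. First I would fix an open set $\omega$ with $\omega\Subset\mathcal{O}_d\cap\mathcal{G}_0$; such a set exists precisely because $\mathcal{O}_d\cap\mathcal{G}_0\neq\emptyset$, and this single choice must serve simultaneously as the observation region for $\phi$ (since $\omega\subset\mathcal{G}_0$) and as the region where the followers are detected through the coupling (since $\omega\subset\mathcal{O}_d$). I would then introduce the standard parabolic weights built from a function $\eta\in C^4(\overline{\mathcal G})$ with $\eta>0$ in $\mathcal G$, $\eta|_\Gamma=0$ and $|\nabla\eta|>0$ on $\overline{\mathcal G}\setminus\omega$, setting $\varphi=(e^{\lambda\eta}-e^{2\lambda\|\eta\|_\infty})/(t(T-t))$, $\xi=e^{\lambda\eta}/(t(T-t))$ and $\theta=e^{s\varphi}$, and recall the known global Carleman estimates for forward and backward stochastic parabolic equations established in \cite{stakNahSPEs}.

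Next I would apply the backward Carleman estimate to $(\phi,\Phi)$ and the forward Carleman estimate to each $\psi_i$. Schematically, the former bounds $\mathbb{E}\iint_Q\theta^2(s\lambda^2\xi|\nabla\phi|^2+s^3\lambda^4\xi^3\phi^2)\,dx\,dt$ by the weighted norm of its source $-a_1\phi-a_2\Phi+\sum_i\alpha_i\psi_i\mathbbm 1_{\mathcal O_d}$, the diffusion $\Phi$, and a local term $\mathbb{E}\iint_{(0,T)\times\omega}\theta^2 s^3\lambda^4\xi^3\phi^2$; the latter bounds the analogous global quantity for $\psi_i$ by the weighted norm of $a_1\psi_i+\tfrac1{\beta_i}\mathbbm 1_{\mathcal G_i}\phi$ plus a local term in $\psi_i$. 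Adding these estimates, the zeroth-order contributions coming from the bounded coefficients $a_1,a_2$ are absorbed into the global left-hand sides by taking $\lambda$ and $s$ large, while the cross terms $\sum_i\alpha_i\psi_i$ (in the $\phi$-estimate) and $\tfrac1{\beta_i}\phi$ (in the $\psi_i$-estimates) are absorbed using that $\xi$ is bounded below and by choosing $\beta_i\geq\overline\beta$ large enough, exactly as the hypothesis permits.

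It then remains to dispose of the two families of local terms. The local term in $\phi$ is immediate: since $\omega\subset\mathcal G_0$ and the weight $\theta^2 s^3\lambda^4\xi^3$ is bounded on $Q$ for fixed $s,\lambda$, it is controlled by $C\,\mathbb{E}\iint_{Q_0}\phi^2\,dx\,dt$. The main obstacle is the local term in the followers, $\mathbb{E}\iint_{(0,T)\times\omega}\theta^2\psi_i^2$, because the $\psi_i$ are not directly observed; this is precisely where $\omega\subset\mathcal O_d$ is essential. Since the combination $\sum_i\alpha_i\psi_i$ enters the $\phi$-equation as a source supported on $\mathcal O_d$, inserting a space-time cutoff supported in $\mathcal O_d$ and applying It\^o's formula to $d\langle\,\cdot\,,\phi\rangle$ lets one re-express the local follower integrals through $\phi$, $\Phi$ and $\Delta\phi$ on $\mathcal O_d$, which are in turn dominated by the already-controlled global left-hand side together with the observations on $Q_0$ and on $Q$ (for $\Phi$); the factors $1/\beta_i$ again guarantee that the feedback produced in this step is absorbable for $\beta_i$ large. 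Handling $m\geq2$ followers simultaneously is the delicate point, as the coupling only exposes the weighted combination $\sum_i\alpha_i\psi_i$ rather than the individual $\psi_i$, and this is where the argument requires the most care. Finally, a standard energy estimate for the backward equation on a subinterval where $\theta$ is bounded below converts the interior control of $\phi$ into the term $\mathbb{E}|\phi(0)|^2_{L^2(\mathcal G)}$, and choosing the weight $\rho$ appropriately (bounded below, so that $\rho^{-1}\leq\rho_0$, and blowing up only as $t\to T$, in the spirit of \cite{stakNahSPEs}) turns the weighted global follower norms into $\sum_i\mathbb{E}\iint_Q\rho^{-2}|\psi_i|^2\,dx\,dt$, which yields the inequality of Proposition \ref{Pro4.2}.
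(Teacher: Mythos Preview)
The paper does not give an explicit proof of Proposition~\ref{Pro4.2}; it simply refers to \cite{stakNahSPEs} for the Carleman technique, and then carries out the analogous computation in full detail only for the \emph{backward} case (Proposition~\ref{Pro4.2back}, via Lemma~\ref{lem4.5stthm}). Your overall strategy---Carleman estimate for the backward component $(\phi,\Phi)$, Carleman estimate for the forward followers, absorption of the lower-order couplings for $\lambda,s$ large and $\beta_i$ large, elimination of the local follower term by an It\^o/duality identity on $\mathcal{O}_d$, and finally an energy argument to reach $\mathbb{E}|\phi(0)|^2$---matches what the paper does for the backward analogue.

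There is, however, one point where your sketch diverges from the paper and which you yourself flag as ``delicate'' without resolving: you propose to apply the forward Carleman estimate to \emph{each} $\psi_i$ separately, which produces local terms $\mathbb{E}\iint_{(0,T)\times\omega}\theta^2\gamma^3\psi_i^2$ for every $i$. The It\^o identity $d(w\zeta\psi_j\phi)$ with $\omega\subset\mathcal{O}_d$ only produces the diagonal contribution $\alpha_j\int w\zeta\psi_j^2$ together with off-diagonal terms $\sum_{i\neq j}\alpha_i\int w\zeta\psi_i\psi_j$, and there is no obvious mechanism to close this system for $m\geq 2$. The paper avoids this entirely: since $\mathcal{O}_{i,d}=\mathcal{O}_d$ for all $i$, the aggregate $h=\sum_{i=1}^m\alpha_i\psi_i$ satisfies a \emph{single} forward equation, and the Carleman estimate is applied to $h$ (cf.\ \eqref{carback5.8carhH} in the backward proof). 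The resulting local term $\int_\omega\theta^2\gamma^3 h^2$ is then removed exactly by computing $d(w_3\zeta_1 h\phi)$, which yields $\int w_3\zeta_1 h^2$ on the nose (see \eqref{foridentth}). The individual weighted norms $\mathbb{E}\iint_Q\rho^{-2}|\psi_i|^2$ are recovered \emph{afterwards} by a Gronwall/energy argument on each forward $\psi_i$-equation, using $\psi_i(0)=0$ and the smallness of the source $\tfrac{1}{\beta_i}\mathbbm{1}_{\mathcal{G}_i}\phi$ (cf.\ \eqref{esttim1obs}). Replacing your ``Carleman on each $\psi_i$'' by ``Carleman on $h=\sum_i\alpha_i\psi_i$, then energy on each $\psi_i$'' closes the gap.
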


From Propositions \ref{propo5.2} and \ref{Pro4.2}, we obtain the following result.
\begin{thm}
Equation \eqref{eqq1.1} is optimally null controllable at time \( T \) in the sense of Stackelberg-Nash.
\end{thm}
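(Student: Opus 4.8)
The plan is to obtain the theorem as an immediate consequence of the two preceding propositions, so that the only thing to check is that the observability inequality produced by the Carleman method is exactly the one demanded by the controllability characterization. First I would recall the chain of reductions already assembled in the paper. By Proposition~\ref{proppse5forw}, for $\beta_i$ sufficiently large the Nash equilibrium for $(J_1^F,\dots,J_m^F)$ exists, is unique, and is given through the adjoint backward system \eqref{backadj}; hence the optimal Stackelberg--Nash null controllability of \eqref{eqq1.1} is equivalent to the null controllability of the coupled forward--backward system \eqref{eqq4.7}, which is a concrete instance of the abstract system \eqref{coeqq1.1abscon} under the identifications in \eqref{asummmofabstset}. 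Applying Theorem~\ref{thmm4.2ex} to this instance, that null controllability is in turn equivalent to the observability inequality \eqref{observaineq} for the adjoint system \eqref{ADJSO1}. This equivalence is precisely what Proposition~\ref{propo5.2} records, and I would invoke it directly.

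Next I would match the inequality supplied by Proposition~\ref{Pro4.2} against the one required by assertion~(2) of Proposition~\ref{propo5.2}. Under the standing geometric hypotheses $\mathcal{O}_{i,d}=\mathcal{O}_d$ and $\mathcal{O}_d\cap\mathcal{G}_0\neq\emptyset$, and for the same largeness threshold on $\beta_i$ that makes the Nash equilibrium well defined, Proposition~\ref{Pro4.2} produces a weight $\rho$, blowing up as $t\to T$, and a constant $C>0$ for which the solutions of \eqref{ADJSO1} satisfy a bound whose left-hand side integrates $\rho^{-2}|\psi_i|^2$ over the whole cylinder $Q$. Since $\mathcal{O}_{i,d}=\mathcal{O}_d\subset\mathcal{G}$, restricting that integral to $(0,T)\times\mathcal{O}_{i,d}$ only decreases the left-hand side while the right-hand sides coincide; hence the estimate of Proposition~\ref{Pro4.2} dominates and therefore implies the observability inequality \eqref{observaineq}. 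Consequently assertion~(2) of Proposition~\ref{propo5.2} holds, and by the equivalence stated there assertion~(1)---optimal null controllability of \eqref{eqq1.1} in the Stackelberg--Nash sense---follows, which is exactly the claim.

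The assembly is thus routine, and the genuine difficulty is entirely packaged in Proposition~\ref{Pro4.2}, that is, in the global Carleman estimate for the coupled adjoint system \eqref{ADJSO1}. The hard point there is that the forward components $\psi_i$ are not observed directly: the right-hand side of the target inequality only controls $\Phi$ on $Q$ and $\phi$ on $Q_0$, while the coupling feeds $\phi$ into the $\psi_i$-equation through $\frac{1}{\beta_i}\mathbbm{1}_{\mathcal{G}_i}\phi$ and feeds the $\psi_i$ back into the $\phi$-equation through $\alpha_i\psi_i\mathbbm{1}_{\mathcal{O}_{i,d}}$. Absorbing these coupling terms---exploiting the overlap condition $\mathcal{O}_d\cap\mathcal{G}_0\neq\emptyset$ to transfer information between the observation region and the coupling region, and taking $\beta_i$ large so that the lower-order coupling is dominated by the Carleman weight---is precisely the content of Proposition~\ref{Pro4.2}. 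Once that estimate is granted, no additional argument is needed for the present theorem.
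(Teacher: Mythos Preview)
Your proposal is correct and follows exactly the paper's own approach: the theorem is stated in the paper as an immediate consequence of Propositions~\ref{propo5.2} and~\ref{Pro4.2}, with no further argument given. Your observation that the $Q$-integral on the left-hand side of Proposition~\ref{Pro4.2} dominates the $\mathcal{O}_{i,d}$-integral required in \eqref{observaineq} is the only detail to fill in, and you handle it correctly.
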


\subsection{Application to the backward stochastic heat equation}
In this section, we study the Stackelberg-Nash null controllability for the backward stochastic heat equation using the method of Carleman estimates.
\subsubsection{Problem statement}
We consider the following backward stochastic heat equation
\begin{equation}\label{eqq1.1back}
\begin{cases}
dy + \Delta y \, dt = \left[a_1 y + a_2 Y + u_1 \mathbbm{1}_{\mathcal{G}_0} + \sum_{i=1}^m v_i \mathbbm{1}_{\mathcal{G}_i} \right] \, dt
+ \left[Y + u_2 \right] \, dW(t) & \text{in } Q, \\
y = 0 & \text{on } \Sigma, \\
y(T) = y_T & \text{in } \mathcal{G},
\end{cases}
\end{equation}
where \( (y, Y) \) represents the state variable, \( y_T \in L^2_{\mathcal{F}_T}(\Omega; L^2(\mathcal{G})) \) is the terminal state, and \( a_1, a_2 \in L^\infty_\mathcal{F}(0,T; L^\infty(\mathcal{G})) \). The control inputs are \( (u_1, u_2) \in \mathcal{U}_T \) for the leaders, and \( (v_1, \dots, v_m) \in \mathcal{V}_T \) for the followers. Notice that, under the conditions given in \eqref{asummmofabstset}, the equation \eqref{eqq1.1back} can also be expressed within the framework of the abstract backward equation \eqref{asteqq1.1back}.

For fixed target functions \( y_{i,d}, Y_{i,d} \in L^{2,\rho}_{\mathcal{F}}(0,T; L^2(\mathcal{G})) \), \( i = 1, 2, \dots, m \), where the weight function \( \rho = \rho(t) \) will be defined later, we now introduce the following functionals:
\begin{enumerate}[(1)]
    \item For the leaders, the \textbf{main functional} \( J \) is given by
    \begin{equation*}
    J(u_1, u_2) = \frac{1}{2} \mathbb{E} \iint_Q \left( \mathbbm{1}_{\mathcal{G}_0} |u_1|^2 + |u_2|^2 \right) \,dx\,dt.
    \end{equation*}
    
    \item For the followers, the \textbf{secondary functionals}  \( J^B_i \) are defined as follows: For each \( i = 1, 2, \dots, m \),
    
\begin{align}\label{functji1seBACKc}
    \begin{aligned}
    J^B_i(u_1, u_2; v_1, \dots, v_m) =  &\, \frac{\alpha_i}{2} \mathbb{E} \iint_{(0,T) \times \mathcal{O}_{i,d}} |y - y_{i,d}|^2 \,dx\,dt + \frac{\widetilde{\alpha}_i}{2} \mathbb{E} \iint_{(0,T) \times \widetilde{\mathcal{O}}_{i,d}} |Y - Y_{i,d}|^2 \,dx\,dt \\
    &+ \frac{\beta_i}{2} \mathbb{E} \iint_{(0,T) \times \mathcal{G}_i} |v_i|^2 \,dx\,dt,
    \end{aligned}
    \end{align}
\end{enumerate}
where \( \alpha_i, \widetilde{\alpha}_i > 0 \), \( \beta_i \geq 1 \) are constants, and \( (y,Y) \) is the solution of \eqref{eqq1.1back}. Comparing \eqref{functji1seBACKc} with our abstract framework, we introduce the operators \( K_i \), \( \widetilde{K}_i \), and \( R_i \) as follows:
\[
K_i = \mathbbm{1}_{\mathcal{O}_{i,d}}, \quad \widetilde{K}_i = \mathbbm{1}_{\widetilde{\mathcal{O}}_{i,d}}, \quad \text{and} \quad R_i = \text{Id}.
\]

Based on the results from Section \ref{sec3}, we have the following result on the Nash equilibrium for \( (J_1^B,\dots,J_m^B)\).
\begin{prop}\label{proppse5back}
There exists a large \( \overline{\beta} \geq 1 \) such that, if \( \beta_i \geq \overline{\beta} \) for \( i = 1, 2, \dots, m \), then for each \( (u_1, u_2) \in \mathcal{U}_T \), there exists a unique Nash equilibrium \( (v^*_1(u_1,u_2), \dots, v^*_m(u_1,u_2)) \in \mathcal{V}_T \) for the functionals \( (J_1^B,\dots,J_m^B)\) associated with \( (u_1,u_2) \). Moreover, we have that
    \begin{align*}
    v^*_i = -\frac{1}{\beta_i} z_i \Big|_{(0,T) \times \mathcal{G}_i}, \qquad i = 1, 2, \dots, m,
    \end{align*}
    where \( z_i \) is the solution of the forward equation
    \begin{equation}\label{backadjback}
    \begin{cases}
    \begin{array}{ll}
    dz_i - \Delta z_i \, dt = \big[ -a_1 z_i - \alpha_i(y - y_{i,d}) \mathbbm{1}_{\mathcal{O}_{i,d}} \big] \, dt + \big[ -a_2 z_i - \widetilde{\alpha}_i (Y - Y_{i,d}) \mathbbm{1}_{\widetilde{\mathcal{O}}_{i,d}} \big] \, dW(t) & \quad \textnormal{in} \, Q, \\
    z_i = 0 & \quad \textnormal{on} \, \Sigma, \\
    z_i(0) = 0 & \quad \textnormal{in} \, \mathcal{G}.
    \end{array}
    \end{cases}
    \end{equation}
\end{prop}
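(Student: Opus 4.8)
The plan is to recognize Proposition \ref{proppse5back} as a direct specialization of the abstract theory of Section \ref{sec3} to the concrete backward stochastic heat setting, so the proof reduces to a bookkeeping verification rather than new analysis. First I would record that, under the identifications in \eqref{asummmofabstset} supplemented by $K_i = \mathbbm{1}_{\mathcal{O}_{i,d}}$, $\widetilde{K}_i = \mathbbm{1}_{\widetilde{\mathcal{O}}_{i,d}}$, and $R_i = \text{Id}$, equation \eqref{eqq1.1back} is precisely the abstract backward system \eqref{asteqq1.1back}, and the functional $J_i^B$ in \eqref{functji1seBACKc} coincides with the abstract secondary functional \eqref{functji1secback}: indeed $|K_i(y - y_{i,d})|_{L^2(\mathcal{G})}^2 = \int_{\mathcal{O}_{i,d}} |y - y_{i,d}|^2\,dx$, similarly for the $\widetilde{K}_i$ term, and $\langle R_i v_i, v_i\rangle_{\mathcal{V}_i} = |v_i|_{L^2(\mathcal{G}_i)}^2$. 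Since the Brownian motion is one-dimensional here, $\mathcal{L}^0_2 \cong L^2(\mathcal{G})$, so the diffusion-term operators are identified consistently.

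Next I would check the structural hypotheses required by Proposition \ref{propp4.1back}. The operator $R_i = \text{Id}$ is self-adjoint and invertible and satisfies the coercivity \eqref{asumRi} with $r_0 = 1$; the characteristic functions $K_i$ and $\widetilde{K}_i$ are orthogonal projections, hence self-adjoint, so $K_i \in \mathcal{S}(\mathcal{H})$ and $\widetilde{K}_i \in \mathcal{S}(\mathcal{L}^0_2)$. With all assumptions in force, Proposition \ref{propp4.1back} provides a threshold $\overline{\beta} \geq 1$ such that, for every $\beta_i \geq \overline{\beta}$ and every leader pair $(u_1, u_2) \in \mathcal{U}_T$, the functionals $(J_1^B, \dots, J_m^B)$ admit a unique associated Nash equilibrium $(v_1^*, \dots, v_m^*) \in \mathcal{V}_T$. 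This settles the existence and uniqueness part of the statement.

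It then remains to translate the abstract characterization \eqref{charaofviback}, namely $v_i^* = -\frac{1}{\beta_i} R_i^{-1} B_i^* z_i$ with $z_i$ the solution of the forward adjoint system \eqref{forwkadj}. Since $R_i^{-1} = \text{Id}$ and $B_i^* = \mathbbm{1}_{\mathcal{G}_i}$ acts by restriction, this immediately gives $v_i^* = -\frac{1}{\beta_i}\, z_i|_{(0,T)\times\mathcal{G}_i}$. To identify \eqref{forwkadj} with \eqref{backadjback}, I would use that the Dirichlet Laplacian is self-adjoint, so $\mathcal{A}^* = \Delta$; that the multiplication operators are self-adjoint, so $F(t)^* = a_1\,\text{Id}$ and $G(t)^* = a_2\,\text{Id}$; and that characteristic functions are idempotent, so $K_i^2 = \mathbbm{1}_{\mathcal{O}_{i,d}}$ and $\widetilde{K}_i^2 = \mathbbm{1}_{\widetilde{\mathcal{O}}_{i,d}}$. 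Substituting these into \eqref{forwkadj} yields exactly the drift $[\Delta z_i - a_1 z_i - \alpha_i(y-y_{i,d})\mathbbm{1}_{\mathcal{O}_{i,d}}]\,dt$ and diffusion $[-a_2 z_i - \widetilde{\alpha}_i(Y-Y_{i,d})\mathbbm{1}_{\widetilde{\mathcal{O}}_{i,d}}]\,dW(t)$ of \eqref{backadjback}, with the homogeneous boundary condition inherited from $\mathcal{D}(\mathcal{A})$ and initial datum $z_i(0) = 0$.

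Since the argument is a substitution into already-proved abstract results, there is no genuine analytic obstacle. The only point demanding care is the adjoint bookkeeping in the final step: one must confirm the self-adjointness of $\Delta$ under homogeneous Dirichlet conditions and of the multiplication operators $a_1, a_2$, together with the idempotence $K_i^2 = K_i$ and $\widetilde{K}_i^2 = \widetilde{K}_i$, so that the abstract forward adjoint \eqref{forwkadj} matches \eqref{backadjback} term by term and the explicit formula for $v_i^*$ follows.
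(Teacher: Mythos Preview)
Your proposal is correct and follows exactly the approach the paper intends: the paper does not give an explicit proof of Proposition \ref{proppse5back} but simply states that it follows ``Based on the results from Section \ref{sec3}'', and your write-up carries out precisely this specialization of Proposition \ref{propp4.1back} and the characterization \eqref{charaofviback} under the identifications \eqref{asummmofabstset} together with $K_i=\mathbbm{1}_{\mathcal{O}_{i,d}}$, $\widetilde{K}_i=\mathbbm{1}_{\widetilde{\mathcal{O}}_{i,d}}$, $R_i=\text{Id}$. Your term-by-term verification that the abstract adjoint system \eqref{forwkadj} reduces to \eqref{backadjback} is accurate, and in fact more detailed than anything the paper spells out.
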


From Proposition \ref{proppse5back}, we can reduce the Stackelberg-Nash controllability problem for the system \eqref{eqq1.1back} to the classical null controllability for the following coupled backward-forward system
\begin{equation}\label{eqq4.7back}
\begin{cases}
\begin{array}{ll}
dy + \Delta y \, dt = \left[a_1 y + a_2 Y + u_1 \mathbbm{1}_{\mathcal{G}_0} - \sum_{i=1}^m \frac{1}{\beta_i} z_i \mathbbm{1}_{\mathcal{G}_i} \right] \, dt
+ \left[Y + u_2 \right] \, dW(t) & \quad \textnormal{in} \, Q, \\
 dz_i - \Delta z_i \, dt = \big[ -a_1 z_i - \alpha_i(y - y_{i,d}) \mathbbm{1}_{\mathcal{O}_{i,d}} \big] \, dt + \big[ -a_2 z_i - \widetilde{\alpha}_i (Y - Y_{i,d}) \mathbbm{1}_{\widetilde{\mathcal{O}}_{i,d}} \big] \, dW(t) & \quad \textnormal{in} \, Q, \\
y = 0, \,\, z_i = 0 & \quad \textnormal{on} \, \Sigma, \\
y(T) = y_T, \,\, z_i(0) = 0, \quad i = 1, 2, \dots, m, & \quad \textnormal{in} \, \mathcal{G}.
\end{array}
\end{cases}
\end{equation}
This problem is equivalent to an observability inequality for the following adjoint coupled system
\begin{equation}\label{ADJSO1back}
\begin{cases}
d\phi - \Delta \phi \,dt = \left[-a_1 \phi  + \sum_{i=1}^m \alpha_i \psi_i \mathbbm{1}_{\mathcal{O}_{i,d}}\right] \,dt +\left[- a_2\phi+ \sum_{i=1}^m \widetilde{\alpha}_i \Psi_i \mathbbm{1}_{\widetilde{\mathcal{O}}_{i,d}}\right] \,dW(t) & \quad \textnormal{in} \, Q, \\
d\psi_i + \Delta \psi_i \,dt = \left[a_1 \psi_i+a_2 \Psi_i + \frac{1}{\beta_i} \mathbbm{1}_{\mathcal{G}_i} \phi\right] \,dt + \Psi_i \,dW(t) & \quad \textnormal{in} \, Q, \\
\phi = 0, \,\, \psi_i = 0 & \quad \textnormal{on} \, \Sigma, \\
\phi(0) = \phi_0, \,\, \psi_i(T) = 0, \quad i = 1, 2, \dots, m, & \quad \textnormal{in} \, \mathcal{G}.
\end{cases}
\end{equation}

By Theorem \ref{thmm4.2exback}, we have the following null controllability characterization in the sense of Stackelberg-Nash for the backward equation \eqref{eqq1.1back}.
\begin{prop}\label{propo5.2back}
Let \( \rho = \rho(t) \) be a suitable weight function, and let \( \beta_i \geq 1 \) (\( i = 1, 2, \dots, m \)) be sufficiently large. The following assertions are equivalent:
\begin{enumerate}
    \item System \eqref{eqq1.1back} is optimally null controllable at time \( 0 \) in the sense of Stackelberg-Nash.
    \item The solution \( (\phi; \psi_i, \Psi_i) \) of the adjoint system \eqref{ADJSO1back} satisfies the observability inequality:
\begin{align}\label{observaineqbacksec}
\begin{aligned}
    &\,\mathbb{E} |\phi(T)|^2_{L^2(\mathcal{G})} + \sum_{i=1}^m \mathbb{E} \int_0^T \int_{\mathcal{O}_{i,d}} \rho^{-2} |\psi_i|^2 \, dx \, dt + \sum_{i=1}^m \mathbb{E} \int_0^T \int_{\widetilde{\mathcal{O}}_{i,d}} \rho^{-2} |\Psi_i|^2 \, dx \, dt \\
    &\leq C \left[ \mathbb{E} \iint_{Q_0} \phi^2 \, dx \, dt + \mathbb{E} \iint_Q \left| \sum_{i=1}^m \widetilde{\alpha}_i \Psi_i \mathbbm{1}_{\widetilde{\mathcal{O}}_{i,d}} - a_2\phi \right|^2 \, dx \, dt \right].
    \end{aligned}
    \end{align}
\end{enumerate}
\end{prop}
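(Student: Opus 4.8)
The plan is to obtain Proposition~\ref{propo5.2back} as a direct specialization of the abstract Theorem~\ref{thmm4.2exback} to the concrete backward stochastic heat setting, after recasting everything through the Nash-equilibrium reduction of Proposition~\ref{proppse5back}. First I would record that, under the identifications in \eqref{asummmofabstset} together with $\mathcal{L}^0_2 \cong L^2(\mathcal{G})$ (the Brownian motion being one-dimensional) and the choices $K_i = \mathbbm{1}_{\mathcal{O}_{i,d}}$, $\widetilde{K}_i = \mathbbm{1}_{\widetilde{\mathcal{O}}_{i,d}}$, $R_i = \mathrm{Id}$, the equation \eqref{eqq1.1back} is exactly the abstract backward system \eqref{asteqq1.1back} and the functionals \eqref{functji1seBACKc} are exactly the abstract $(J_1^B,\dots,J_m^B)$. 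Since $\beta_i$ is taken sufficiently large, Proposition~\ref{proppse5back} applies and yields the unique Nash equilibrium $v_i^* = -\tfrac{1}{\beta_i}\, z_i|_{(0,T)\times\mathcal{G}_i}$, which is the concrete form of \eqref{charaofviback}.

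Next I would insert this characterization into \eqref{eqq1.1back} to see that the optimal Stackelberg-Nash null controllability of \eqref{eqq1.1back} is, by definition, equivalent to the null controllability (with the associated norm bound on the leaders) of the coupled backward-forward system \eqref{eqq4.7back}. The point is that \eqref{eqq4.7back} is precisely the concrete instance of the abstract coupled system \eqref{coeqq1.1absconback}: substituting $\mathcal{A}=\Delta$, $F=a_1\mathrm{Id}$, $G=a_2\mathrm{Id}$, $D_1=\mathbbm{1}_{\mathcal{G}_0}$, $D_2=\mathrm{Id}$ and $B_iR_i^{-1}B_i^* = \mathbbm{1}_{\mathcal{G}_i}(\cdot)$ into \eqref{coeqq1.1absconback} reproduces \eqref{eqq4.7back} line by line, and the forward adjoint block is exactly \eqref{backadjback}. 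In the same way, the abstract adjoint forward-backward system \eqref{coeqq1.adjointback} specializes to \eqref{ADJSO1back}, using $\mathcal{A}^*=\Delta$, $F^*=a_1$, $G^*=a_2$, and $B_iR_i^{-1}B_i^*\phi = \mathbbm{1}_{\mathcal{G}_i}\phi$.

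With these identifications in hand, I would apply Theorem~\ref{thmm4.2exback}: the null controllability of \eqref{eqq4.7back} is equivalent to the abstract observability inequality \eqref{observaineqnuuback} for \eqref{coeqq1.adjointback}. It then remains to translate \eqref{observaineqnuuback} into \eqref{observaineqbacksec}. The left-hand side is immediate, since $|K_i\psi_i|_{\mathcal{H}}^2 = \int_{\mathcal{O}_{i,d}}|\psi_i|^2\,dx$ and $|\widetilde{K}_i\Psi_i|_{\mathcal{L}^0_2}^2 = \int_{\widetilde{\mathcal{O}}_{i,d}}|\Psi_i|^2\,dx$. For the right-hand side, $|D_1^*\phi|_{\mathcal{U}_1}^2 = \int_{\mathcal{G}_0}\phi^2\,dx$ gives the $Q_0$ term, and since $D_2=\mathrm{Id}$, $\widetilde{K}_i^2=\mathbbm{1}_{\widetilde{\mathcal{O}}_{i,d}}$ and $G(t)^*\phi = a_2\phi$, the second control term becomes $\big|\sum_{i=1}^m\widetilde{\alpha}_i\Psi_i\mathbbm{1}_{\widetilde{\mathcal{O}}_{i,d}} - a_2\phi\big|_{L^2(\mathcal{G})}^2$, integrated over $Q$. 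This produces exactly \eqref{observaineqbacksec}, completing the equivalence.

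I expect the only delicate step to be the bookkeeping of the adjoint operators in the diffusion channel — in particular correctly identifying $D_2^*\big(\sum_i\widetilde{\alpha}_i\widetilde{K}_i^2\Psi_i - G(t)^*\phi\big)$ with $\sum_i\widetilde{\alpha}_i\Psi_i\mathbbm{1}_{\widetilde{\mathcal{O}}_{i,d}} - a_2\phi$ and keeping the roles of $K_i$ versus $\widetilde{K}_i$ (hence $\mathcal{O}_{i,d}$ versus $\widetilde{\mathcal{O}}_{i,d}$) straight. There is no new analytic difficulty beyond Theorem~\ref{thmm4.2exback}; the content of the proposition is the verification that the concrete heat problem is a faithful instance of the abstract framework.
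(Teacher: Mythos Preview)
Your proposal is correct and matches the paper's approach: the paper itself does not give a separate proof but introduces the proposition with ``By Theorem~\ref{thmm4.2exback}, we have the following null controllability characterization\ldots'', so the content is precisely the specialization you describe, via Proposition~\ref{proppse5back} and the identifications in \eqref{asummmofabstset}.
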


To prove the observability inequality \eqref{observaineqbacksec}, we assume the following assumptions: For each \( i = 1, 2, \dots, m \),
\begin{align}\label{asumpforcar}
\begin{aligned}
&\widetilde{\mathcal{O}}_{i,d} = \mathcal{G},\quad \quad  \widetilde{\alpha}_i=\alpha_i, \\
\mathcal{O}&_{i,d}   = \mathcal{O}_d,\quad 
 \mathcal{O}_d\cap \mathcal{G}_0 \neq \emptyset.
\end{aligned}
\end{align}
We have the following observability inequality for the system \eqref{ADJSO1back}. The proof of this key result will be derived in the next subsection.
\begin{prop}\label{Pro4.2back}
Assume that \eqref{asumpforcar} holds. Then for \( \beta_i \geq 1 \) large enough for each \( i = 1, 2, \dots, m \), there exists a constant \( C > 0 \) and a positive weight function \( \rho = \rho(t) \), which blows up as \( t \to 0 \), such that for any \( \phi_0 \in L^2_{\mathcal{F}_0}(\Omega; L^2(\mathcal{G})) \), the solution \( (\phi; \psi_i, \Psi_i) \) of \eqref{ADJSO1back} satisfies
\begin{align}\label{observaineqback}
\begin{aligned}
&\,\mathbb{E} |\phi(T)|^2_{L^2(\mathcal{G})} + \sum_{i=1}^m \mathbb{E} \iint_Q \rho^{-2} |\psi_i|^2 \, dx \, dt + \sum_{i=1}^m \mathbb{E} \iint_Q \rho^{-2} |\Psi_i|^2 \, dx \, dt \\
&\leq C \left[ \mathbb{E} \iint_{Q_0} \phi^2 \, dx \, dt + \mathbb{E} \iint_Q \left| \sum_{i=1}^m \alpha_i \Psi_i - a_2\phi \right|^2 \, dx \, dt \right].
\end{aligned}
\end{align}
\end{prop}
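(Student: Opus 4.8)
The plan is to derive \eqref{observaineqback} from two global stochastic Carleman estimates — one for the forward equation solved by $\phi$, one for the backward equations solved by the $(\psi_i,\Psi_i)$ — all built on a common weight that is singular only as $t\to0$. First I would fix $\eta\in C^4(\overline{\mathcal{G}})$ with $\eta>0$ in $\mathcal{G}$, $\eta|_\Gamma=0$ and $|\nabla\eta|>0$ on $\overline{\mathcal{G}}\setminus\omega_0$, where $\omega_0\subset\subset\mathcal{O}_d\cap\mathcal{G}_0$ (nonempty by \eqref{asumpforcar}), together with $\ell\in C^1([0,T])$ satisfying $\ell(t)=t$ near $t=0$ and $\ell$ bounded below on $[T/2,T]$. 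With $\varphi=\ell^{-1}e^{\mu\eta}$, $\alpha=\ell^{-1}(e^{\mu\eta}-e^{2\mu\|\eta\|_\infty})$ and $\theta=e^{\lambda\alpha}$, the one-sided singularity ensures that every product $\theta^{a}\varphi^{b}$ stays bounded on $\overline{Q}$ while $\theta^2\varphi^3\to0$ as $t\to0$; this is exactly what yields the factor $\rho^{-2}=\theta^2\varphi^3$, so that $\rho$ blows up as $t\to0$, in the final inequality.

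Next I would apply the Carleman estimate for forward stochastic parabolic equations to $\phi$, whose drift source is $-a_1\phi+\sum_i\alpha_i\psi_i\mathbbm{1}_{\mathcal{O}_d}$ and whose diffusion term is precisely $\Xi:=\sum_i\alpha_i\Psi_i-a_2\phi$ (here I use $\widetilde{\mathcal{O}}_{i,d}=\mathcal{G}$ and $\widetilde{\alpha}_i=\alpha_i$). After absorbing $a_1\phi$ for $\lambda,\mu$ large, this gives
\begin{align*}
\mathbb{E}\iint_Q\theta^2\lambda^3\mu^4\varphi^3\phi^2\,dx\,dt\le C\Big[\mathbb{E}\iint_Q\theta^2\big|\textstyle\sum_i\alpha_i\psi_i\mathbbm{1}_{\mathcal{O}_d}\big|^2\,dx\,dt+\mathbb{E}\iint_Q\theta^2\lambda^2\mu^2\varphi^2|\Xi|^2\,dx\,dt+\mathbb{E}\iint_{(0,T)\times\omega_0}\theta^2\lambda^3\mu^4\varphi^3\phi^2\,dx\,dt\Big].
\end{align*}
In parallel I would apply the backward Carleman estimate to each $\psi_i$, whose drift source is $a_1\psi_i+a_2\Psi_i+\tfrac1{\beta_i}\mathbbm{1}_{\mathcal{G}_i}\phi$; absorbing the zeroth-order terms controls $\mathbb{E}\iint_Q\theta^2\lambda^3\mu^4\varphi^3\psi_i^2$ by $C\beta_i^{-2}\mathbb{E}\iint_Q\theta^2\mathbbm{1}_{\mathcal{G}_i}\phi^2$ plus a local term $\mathbb{E}\iint_{(0,T)\times\omega_0}\theta^2\lambda^3\mu^4\varphi^3\psi_i^2$. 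The diffusion parts $\Psi_i$ I would recover separately from a weighted Itô energy identity for the backward equation (using $\psi_i(T)=0$ and $\theta^2\varphi^3\to0$ as $t\to0$ to kill the time-endpoint contributions), bounding $\mathbb{E}\iint_Q\theta^2\varphi^3\Psi_i^2$ through the already-controlled $\psi_i$ and the source.

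I would then add the $\phi$-estimate and the $m$ follower estimates and absorb the coupling. The term $\mathbb{E}\iint_Q\theta^2|\sum_i\alpha_i\psi_i\mathbbm{1}_{\mathcal{O}_d}|^2$ carries no $\lambda,\varphi$ weight, so it is dominated by $C\lambda^{-3}\mu^{-4}\sum_i\mathbb{E}\iint_Q\theta^2\lambda^3\mu^4\varphi^3\psi_i^2$ and is absorbed into the follower left-hand sides; symmetrically $\sum_i\beta_i^{-2}\mathbb{E}\iint_Q\theta^2\mathbbm{1}_{\mathcal{G}_i}\phi^2$ is absorbed into the $\phi$ left-hand side once $\lambda$ (or $\beta_i$) is large. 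After absorption only the globally observed diffusion term $\mathbb{E}\iint_Q\theta^2\lambda^2\mu^2\varphi^2|\Xi|^2$ and the local observations of $\phi$ and of the $\psi_i$ remain on the right. I then fix $\lambda,\mu$ large but definite: since $\theta^2\varphi^3$ and $\theta^2\varphi^2$ are bounded on $\overline{Q}$, the weighted local $\phi$-term is $\le C\,\mathbb{E}\iint_{Q_0}\phi^2$ and the diffusion term is $\le C\,\mathbb{E}\iint_Q|\Xi|^2$, while the left-hand sides dominate $\sum_i\mathbb{E}\iint_Q\rho^{-2}(\psi_i^2+\Psi_i^2)$; a classical forward energy estimate on $[T/2,T]$, where the weight is comparable to a constant, then adjoins $\mathbb{E}|\phi(T)|^2_{L^2(\mathcal{G})}$ to the left, producing \eqref{observaineqback}.

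The hard part will be eliminating the local follower integrals $\mathbb{E}\iint_{(0,T)\times\omega_0}\theta^2\lambda^3\mu^4\varphi^3\psi_i^2$, since the $\psi_i$ are not directly observed. I expect to handle them by exploiting the coupling on the overlap $\omega_0\subset\subset\mathcal{O}_d\cap\mathcal{G}_0$, where the drift of $\phi$ contains $\sum_i\alpha_i\psi_i$: applying Itô's formula to a product of $\phi$ with a cut-off weight and with $\psi_i$ (or $\sum_i\alpha_i\psi_i$) converts these local terms into contributions involving $\phi$, $\nabla\phi$ and the globally observed diffusion $\Xi$, all of which are already controlled. The delicate point is the multiplicity $m\ge2$: the followers enter $\phi$ only through the single combination $\sum_i\alpha_i\psi_i$, so separating the individual local terms is precisely where the distinct source supports $\mathcal{G}_i$ and the largeness of the $\beta_i$ must be used. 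The remaining technical ingredient — establishing the two underlying stochastic Carleman estimates (forward for $\phi$, backward for $(\psi_i,\Psi_i)$) with matching one-sided weights, and justifying the weighted energy identity recovering $\Psi_i$ — I would adapt from the stochastic Carleman machinery in the cited literature.
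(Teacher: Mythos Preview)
Your overall plan --- a forward Carleman estimate for $\phi$, a backward Carleman estimate for the followers, and an It\^o duality trick on the overlap $\omega_0\subset\mathcal{O}_d\cap\mathcal{G}_0$ to eliminate the local follower observation --- is exactly the architecture the paper uses. The difficulty you single out as ``the hard part'' is also the right one. But your proposed resolution has a genuine gap.

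By applying the backward Carleman estimate to each $\psi_i$ separately you create $m$ local terms $\mathbb{E}\iint_{(0,T)\times\omega_0}\theta^2\lambda^3\varphi^3\psi_i^2$. The It\^o duality with $\phi$ can only recover the local integral of $h^2:=\big|\sum_i\alpha_i\psi_i\big|^2$, because $h\,\mathbbm{1}_{\mathcal{O}_d}$ is the \emph{only} follower combination that appears in the drift of $\phi$. There is no inequality $\sum_i\psi_i^2\le C\,h^2$, and neither the geometry of the $\mathcal{G}_i$ nor the size of the $\beta_i$ helps: those parameters govern the sources $\beta_i^{-1}\mathbbm{1}_{\mathcal{G}_i}\phi$ in the follower equations, not the local observations on $\omega_0$. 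So the step ``separating the individual local terms'' does not go through as written.

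The paper sidesteps this entirely by never applying Carleman to the individual $\psi_i$. Under \eqref{asumpforcar} the pair $(h,H)=\big(\sum_i\alpha_i\psi_i,\sum_i\alpha_i\Psi_i\big)$ solves a \emph{single} backward heat equation with drift $a_1h+a_2H+(\sum_i\alpha_i\beta_i^{-1}\mathbbm{1}_{\mathcal{G}_i})\phi$ and diffusion $H$; the backward Carleman is applied once, to $(h,H)$. The resulting local term $\mathbb{E}\iint_{(0,T)\times\mathcal{O}_2}\theta^2\gamma^3 h^2$ is then removed by computing $d(w_3\zeta_1\,h\phi)$, which produces exactly $w_3\zeta_1 h^2$ (coming from $h\,\mathbbm{1}_{\mathcal{O}_d}$ in the drift of $\phi$) together with controllable terms in $\phi$, $\nabla\phi$ and the observed diffusion $H-a_2\phi$. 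A further It\^o computation $d(w_5\zeta_2\phi^2)$ absorbs the leftover local $|\nabla\phi|^2$. Only \emph{after} the Carleman inequality for $(\phi;h,H)$ is closed are the individual norms $\sum_i\iint_Q\rho^{-2}(\psi_i^2+\Psi_i^2)$ recovered, via a weighted energy/Gr\"onwall argument on each backward $\psi_i$-equation (using $\psi_i(T)=0$ and that $\rho^{-2}\to0$ at $t=0$); this last step needs only the global weighted bound on $\phi$ and no local observation whatsoever. Your scheme becomes correct if you replace ``Carleman on each $\psi_i$'' by ``Carleman on the aggregate $(h,H)$'' and postpone disaggregation to the very end.

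A minor technical difference: the paper uses the standard two-sided weights $\gamma=1/(t(T-t))$ in the Carleman lemmas and converts to the one-sided weight $\overline{\gamma}=1/\ell(t)$ afterwards via an energy estimate on $[T/2,T]$ (where the largeness of $\beta_i$ is used to absorb the followers). Your choice of a one-sided weight from the start is legitimate but prevents you from quoting Lemmas~\ref{lm1.1} and~\ref{lm1.13.2252} directly.
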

By Propositions \ref{propo5.2back} and \ref{Pro4.2back}, we have the following main result.
\begin{thm}\label{thmm5.22}
Equation \eqref{eqq1.1back} is optimally null controllable at time \(0\) in the sense of Stackelberg-Nash.
\end{thm}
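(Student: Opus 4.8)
The plan is to deduce Theorem~\ref{thmm5.22} by combining the abstract equivalence of Proposition~\ref{propo5.2back} with the concrete observability estimate of Proposition~\ref{Pro4.2back}, once the structural hypotheses \eqref{asumpforcar} are imposed. First I would fix the weight function $\rho=\rho(t)$ furnished by Proposition~\ref{Pro4.2back} (the one that blows up as $t\to 0$) and choose a threshold $\overline{\beta}\ge 1$ large enough that, for every $\beta_i\ge\overline{\beta}$, both the existence and characterization of the Nash equilibrium (Proposition~\ref{proppse5back}) and the observability inequality \eqref{observaineqback} are simultaneously available. With $\rho$ and the $\beta_i$ so chosen, all hypotheses needed to apply part (2) of Proposition~\ref{propo5.2back} are in force.

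The key point is that, under \eqref{asumpforcar}, the inequality \eqref{observaineqback} established in Proposition~\ref{Pro4.2back} already implies the inequality \eqref{observaineqbacksec} required in Proposition~\ref{propo5.2back}. Indeed, \eqref{asumpforcar} forces $\widetilde{\mathcal{O}}_{i,d}=\mathcal{G}$ and $\widetilde{\alpha}_i=\alpha_i$, so that $\mathbbm{1}_{\widetilde{\mathcal{O}}_{i,d}}\equiv 1$: the $\Psi_i$-observation terms on the left-hand side, as well as the right-hand side defect term $\sum_{i=1}^m\widetilde{\alpha}_i\widetilde{K}_i^2\Psi_i-a_2\phi=\sum_{i=1}^m\alpha_i\Psi_i-a_2\phi$, then match exactly in the two inequalities. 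For the $\psi_i$-terms, the assumption $\mathcal{O}_{i,d}=\mathcal{O}_d\subset\mathcal{G}$ gives $\int_{\mathcal{O}_d}\rho^{-2}|\psi_i|^2\le\iint_Q\rho^{-2}|\psi_i|^2$, so the left-hand side of \eqref{observaineqbacksec} is dominated by that of \eqref{observaineqback}, while their right-hand sides coincide. Hence \eqref{observaineqback} yields \eqref{observaineqbacksec}.

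Putting these together, Proposition~\ref{Pro4.2back} supplies the observability inequality \eqref{observaineqbacksec}, and the implication ``(2)$\Rightarrow$(1)'' in Proposition~\ref{propo5.2back} then gives that system \eqref{eqq1.1back} is optimally null controllable at time $0$ in the sense of Stackelberg--Nash, which is exactly the statement of Theorem~\ref{thmm5.22}. At the level of the present theorem, this reduction is all that is required.

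I expect the genuinely hard step to be the proof of Proposition~\ref{Pro4.2back} itself, namely the observability inequality for the coupled forward--backward adjoint system \eqref{ADJSO1back}. That estimate must be obtained from a new global Carleman estimate adapted to the backward stochastic heat operator governing the $\psi_i$-component, combined with the absorption of the coupling terms $\frac{1}{\beta_i}\mathbbm{1}_{\mathcal{G}_i}\phi$ by choosing the penalization parameters $\beta_i$ sufficiently large, and with the geometric condition $\mathcal{O}_d\cap\mathcal{G}_0\neq\emptyset$ ensuring that the two observation regions overlap. This technical core is precisely what is deferred to the next subsection; the reduction above shows that, granted it, Theorem~\ref{thmm5.22} follows immediately.
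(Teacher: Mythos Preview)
Your proposal is correct and follows exactly the paper's approach: the paper proves Theorem~\ref{thmm5.22} simply by invoking Propositions~\ref{propo5.2back} and~\ref{Pro4.2back}, and your argument makes explicit the (straightforward) verification that, under the assumptions~\eqref{asumpforcar}, the observability inequality~\eqref{observaineqback} indeed implies the one~\eqref{observaineqbacksec} required by Proposition~\ref{propo5.2back}. The genuine work, as you correctly anticipate, lies in the Carleman-based proof of Proposition~\ref{Pro4.2back}, which is carried out in the subsequent subsection.
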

\begin{rmk}
The assumptions in \eqref{asumpforcar} are crucial for deriving the observability inequality \eqref{observaineqback} in the next subsection using Carleman estimates, which are then employed to prove Theorem \ref{thmm5.22}. Therefore, it would be of great interest to investigate whether these assumptions are indeed necessary.
\end{rmk}
\subsubsection{Proof of the observability inequality \eqref{observaineqback}}
The proof of the estimate \eqref{observaineqback} relies on a new Carleman estimate for the coupled system \eqref{ADJSO1back}. To apply this method, we first introduce some notations and functions. For large parameters \( \lambda, \mu \geq 1 \), we define the functions
\begin{align*}
\alpha \equiv \alpha(t, x) = \frac{e^{\mu \eta_0(x)} - e^{2\mu |\eta_0|_\infty}}{t(T-t)}, \qquad  \theta  = e^{\lambda \alpha}, \qquad \gamma \equiv \gamma(t) = \frac{1}{t(T-t)},
\end{align*}
where \( \eta_0 \in C^4(\overline{\mathcal{G}}) \) is a function satisfying the following conditions:
\begin{equation*}
    \eta_0 > 0 \quad \text{in } \mathcal{G}, \quad \eta_0 = 0 \quad \text{on } \Gamma, \quad |\nabla \eta_0| > 0 \quad \text{in } \overline{\mathcal{G} \setminus \mathcal{B}},
\end{equation*}
for a non-empty open subset \( \mathcal{B} \Subset \mathcal{G} \). The existence of such a function \( \eta_0 \) can be found in \cite{BFurIman}.

For a process $z$, we denote by
$$I(z)=\lambda^3\mathbb{E}\iint_Q\theta^2\gamma^3 z^2\,dx\,dt+\lambda\mathbb{E}\iint_Q \theta^2\gamma|\nabla z|^2\,dx\,dt.$$
We first recall the following Carleman estimate (see \cite[Theorem 1.1]{liu2014global}).
\begin{lm}\label{lm1.1}
There exist a large $\mu_0\geq1$ such that for all $\mu=\mu_0$, one can find constants $C>0$ and $\lambda_1\geq1$ depending only on $\mathcal{G}$, $\mathcal{B}$, $\mu_0$ and $T$ such that for all $\lambda\geq\lambda_1$, $F_0,F_1\in L^2_\mathcal{F}(0,T;L^2(\mathcal{G}))$, and $z_0\in L^2_{\mathcal{F}_0}(\Omega;L^2(\mathcal{G}))$, the solution $z$ of the forward equation \begin{equation*}
\begin{cases}
\begin{array}{ll}
dz - \Delta z \,dt = F_0 \,dt + F_1\,dW(t)&\quad\textnormal{in}\,\,Q,\\
z=0&\quad\textnormal{on}\,\,\Sigma,\\
z(0)=z_0&\quad\textnormal{in}\,\,\mathcal{G},
\end{array}
\end{cases}
\end{equation*}
satisfies that
\begin{align}\label{carfor5.6}
\begin{aligned}
I(z)\leq C \left[ \lambda^3\mathbb{E}\int_0^T\int_\mathcal{B} \theta^2\gamma^3 z^2 \,dx\,dt+ \mathbb{E}\iint_Q \theta^2F_0^2 \,dx\,dt+\lambda^2\mathbb{E}\iint_Q \theta^2\gamma^2F_1^2 \,dx\,dt\right]. 
\end{aligned}\end{align}
\end{lm}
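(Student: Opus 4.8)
The plan is to establish \eqref{carfor5.6} by the weighted-energy (Fursikov--Imanuvilov) method adapted to the stochastic parabolic operator, as in \cite[Theorem 1.1]{liu2014global}. First I would pass to the weighted process $v=\theta z$. Since $\theta$ is deterministic and smooth in $t$, Itô's formula carries no extra cross-variation, so $dv=\theta_t z\,dt+\theta\,dz$; writing $\ell=\lambda\alpha$ and using $\nabla\theta=\theta\nabla\ell$ together with $z=\theta^{-1}v$, a direct computation gives the conjugated equation
\[
dv-\Delta v\,dt=\bigl[\ell_t v-2\nabla\ell\cdot\nabla v+(|\nabla\ell|^2-\Delta\ell)v+\theta F_0\bigr]dt+\theta F_1\,dW(t),
\]
in which every coefficient of $v$ is an explicit function of $(t,x,\lambda,\mu)$, with leading behaviour governed by $\lambda^2\mu^2\gamma^2|\nabla\eta_0|^2$ and $\lambda\mu^2\gamma|\nabla\eta_0|^2$.

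Next I would split the drift into a symmetric principal part (collecting $\Delta v$ and the zeroth-order term $(|\nabla\ell|^2-\Delta\ell)v$) and an antisymmetric part (collecting the transport term $-2\nabla\ell\cdot\nabla v$ and $\ell_t v$), multiply the conjugated equation by the antisymmetric part, and apply Itô's formula to the resulting quadratic functionals of $v$. This yields a pointwise identity of the schematic form ``cross term $=$ total $t$-differential $+$ spatial divergence $+$ positive quadratic form in $(v,\nabla v)$ $+$ Itô corrections''. Integrating over $Q$, taking expectation, and discarding the vanishing martingale, the positive quadratic form produces, after first fixing $\mu=\mu_0$ large enough that the coefficients proportional to $|\nabla\eta_0|^2$ dominate on $\overline{\mathcal{G}\setminus\mathcal{B}}$ and then taking $\lambda\geq\lambda_1$ large to absorb all lower-order remainders, exactly the two bulk terms $\lambda^3\theta^2\gamma^3 z^2$ and $\lambda\theta^2\gamma|\nabla z|^2$ that constitute $I(z)$.

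It then remains to dispose of the boundary and source contributions. The spatial divergence integrates to a term on $\Sigma$ that, since $z=0$ there, reduces to a multiple of $(\partial_\nu z)^2$ with the favourable sign coming from $\partial_\nu\eta_0\leq 0$ on $\Gamma$, so it can be dropped; the total $t$-differential integrates to boundary-in-time terms at $t=0$ and $t=T$, which vanish because $\gamma\to+\infty$ forces $\alpha\to-\infty$ and hence $\theta$ and all its polynomial-in-$\gamma$ multiples decay to zero there. The free terms $\theta F_0$ and $\theta F_1$, bounded by Cauchy--Schwarz against the positive bulk, contribute precisely $\mathbb{E}\iint_Q\theta^2 F_0^2$ and $\lambda^2\mathbb{E}\iint_Q\theta^2\gamma^2 F_1^2$ on the right. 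Finally, because $|\nabla\eta_0|$ may vanish on $\mathcal{B}$, the gradient coefficient degenerates there; inserting a smooth cutoff supported on a slightly enlarged copy of $\mathcal{B}$ and integrating by parts in the equation bounds the local gradient integral by the local zeroth-order integral, which supplies the single observation term $\lambda^3\mathbb{E}\int_0^T\int_{\mathcal{B}}\theta^2\gamma^3 z^2$.

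The main obstacle, and the feature absent from the deterministic theory, is the control of the Itô correction terms. Applying Itô's formula to the quadratic functionals of $v$ produces, beyond the expectation-zero martingale, extra contributions proportional to $|\theta F_1|^2$ and to cross products of $\theta F_1$ with $v$ and $\nabla v$. The delicate point is to show that each such correction is either genuinely lower order with respect to the positive bulk (hence absorbed by enlarging $\lambda$) or dominated by the asserted right-hand side with exactly the weight $\lambda^2\theta^2\gamma^2$; matching the precise powers of $\lambda$ and $\gamma$, rather than settling for a cruder bound, is where the careful bookkeeping concentrates, and I would follow \cite{liu2014global} for the sharp constants.
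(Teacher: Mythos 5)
The paper offers no proof of this lemma at all: it is recalled verbatim from \cite[Theorem 1.1]{liu2014global}, and your sketch is an accurate outline of precisely that reference's weighted-identity argument --- the conjugation $v=\theta z$ (your conjugated equation is computed correctly), the symmetric/antisymmetric splitting with the stochastic identity ``cross term $=$ $t$-differential $+$ divergence $+$ positive bulk $+$ It\^o corrections'', the fix-$\mu$-then-enlarge-$\lambda$ hierarchy, the favourable sign of $\partial_\nu\eta_0$ on $\Gamma$, the vanishing of $\theta\gamma^k$ at $t=0,T$, and the local Caccioppoli absorption of the gradient term on $\mathcal{B}$. Since you also correctly single out the It\^o correction terms carrying the sharp weight $\lambda^2\theta^2\gamma^2$ on $F_1$ as the genuinely stochastic difficulty and defer that bookkeeping to \cite{liu2014global}, your proposal agrees in approach and content with the proof the paper relies on by citation.
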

We also have the following Carleman estimate (we refer to \cite[Theorem 6.1]{tang2009null}).
\begin{lm}\label{lm1.13.2252}
There exist a large $\mu_1\geq1$ such that for $\mu=\mu_1$, one can find constants $C>0$ and $\lambda_2\geq1$ depending only on $\mathcal{G}$, $\mathcal{B}$, $\mu_1$ and $T$ such that for all $\lambda\geq\lambda_2$, $F_2\in L^2_\mathcal{F}(0,T;L^2(\mathcal{G}))$ and $z_T\in L^2_{\mathcal{F}_T}(\Omega;L^2(\mathcal{G}))$, the solution $(z,Z)$ of the backward equation 
\begin{equation*}
\begin{cases}
dz+\Delta z\,dt=F_2 \,dt+ Z \,dW(t) & \textnormal{in}\,\,Q,\\ 
z=0 & \textnormal{on}\,\,\Sigma,\\
z(T)=z_T & \textnormal{in}\,\, \mathcal{G},
\end{cases}
\end{equation*} satisfies 
\begin{align}\label{carback5.8}
\begin{aligned}
I(z)\leq C \left[ \lambda^3\mathbb{E}\int_0^T\int_\mathcal{B} \theta^2\gamma^3 z^2 \,dx\,dt+ \mathbb{E}\iint_Q \theta^2 F_2^2 \,dx\,dt+\lambda^2\mathbb{E}\iint_Q \theta^2\gamma^2 Z^2 \,dx\,dt\right]. 
\end{aligned}
\end{align}
\end{lm}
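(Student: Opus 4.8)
The plan is to prove the weighted estimate \eqref{carback5.8} by the classical Carleman method adapted to backward stochastic parabolic equations (in the spirit of Fursikov--Imanuvilov and its stochastic counterpart in \cite{tang2009null}): conjugate the equation by the weight $\theta = e^{\lambda\alpha}$ and exploit a pointwise differential identity whose cross terms, after integration by parts, dominate the weighted energy $I(z)$.

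First I would set $\ell = \lambda\alpha$ (so $\theta = e^{\ell}$) and introduce the conjugated variable $v = \theta z$. Since $\theta$ is deterministic and smooth in $(t,x)$, Itô's formula gives $dv = \ell_t v\,dt + \theta\,dz$, and a direct computation of $\theta\Delta(\theta^{-1}v)$ yields
\begin{equation*}
\theta\,dz + \theta\Delta z\,dt = dv + \Delta v\,dt - 2\nabla\ell\cdot\nabla v\,dt + \bigl(|\nabla\ell|^2 - \Delta\ell - \ell_t\bigr)v\,dt.
\end{equation*}
Using the equation $\theta(dz+\Delta z\,dt) = \theta F_2\,dt + \theta Z\,dW$, this identifies the SPDE satisfied by $v$, whose drift I split into a formally self-adjoint part and the antisymmetric first-order part $-2\nabla\ell\cdot\nabla v$, with stochastic term $\theta Z\,dW$. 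Next I would derive the fundamental pointwise identity by computing the Itô differentials of the quadratic quantities $v^2$, $|\nabla v|^2$ and $(\nabla\ell\cdot\nabla v)\,v$ and combining them with the equation for $v$. Crucially, because $v$ carries the martingale part $\theta Z\,dW$, each application of Itô's formula to a quadratic functional produces a quadratic-variation correction proportional to $\theta^2 Z^2$; this is exactly the origin of the last term $\lambda^2\mathbb{E}\iint_Q\theta^2\gamma^2 Z^2$ on the right-hand side of \eqref{carback5.8}. I would then integrate over $Q$ and take expectation: the stochastic integrals vanish, the spatial boundary integrals on $\Sigma$ vanish because $v|_\Sigma = 0$ (the surviving normal-derivative term carrying the favorable sign coming from $\eta_0=0$ on $\Gamma$ and $\eta_0>0$ inside), and the temporal boundary terms at $t=0,T$ vanish because $\theta$ and all its $(t,x)$-derivatives decay to zero as $t\to 0^+$ and $t\to T^-$ (so the terminal datum $z_T$ leaves no trace).

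The dominant Carleman terms arise from the cross product between the self-adjoint and antisymmetric parts: after integration by parts in $x$, this produces, up to lower-order contributions, a multiple of $\lambda^3\mathbb{E}\iint_Q\theta^2\gamma^3 v^2 + \lambda\mathbb{E}\iint_Q\theta^2\gamma|\nabla v|^2$ with positive coefficient, using the fixed choice $\mu=\mu_1$ large and the lower bound $|\nabla\eta_0|>0$ on $\overline{\mathcal{G}\setminus\mathcal{B}}$. All remaining terms are then absorbed: those with fewer powers of $\lambda$ or $\gamma$ by taking $\lambda\geq\lambda_2$ large, and those supported where $|\nabla\eta_0|$ may vanish by introducing a cutoff supported near $\mathcal{B}$, which transfers them into the localized term $\lambda^3\mathbb{E}\int_0^T\int_{\mathcal{B}}\theta^2\gamma^3 z^2$. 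Undoing the substitution via $\theta\nabla z = \nabla v - v\nabla\ell$ (so that $\theta^2|\nabla z|^2$ is controlled by $|\nabla v|^2$ plus $|\nabla\ell|^2 v^2 \sim \lambda^2\gamma^2 v^2$, which is dominated by the principal term for $\lambda$ large) converts the left-hand side into $I(z)$ and yields \eqref{carback5.8}.

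The main obstacle I anticipate is the careful bookkeeping of the stochastic Itô-correction terms: one must verify that the $\theta^2 Z^2$ contributions generated by the quadratic-variation corrections appear with precisely the weight $\lambda^2\gamma^2$ and no worse power of $\lambda$ or $\gamma$, so that they can be placed on the right-hand side without spoiling the absorption of the principal terms. A secondary delicate point is checking the sign of the boundary integral on $\Sigma$, which relies on the geometric properties of $\eta_0$ guaranteed by the construction in \cite{BFurIman}.
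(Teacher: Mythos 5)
You should first be aware that the paper contains no internal proof of this lemma: it is imported verbatim from \cite[Theorem 6.1]{tang2009null} (``we refer to \dots''), so the only meaningful comparison is with the standard proof in that reference. Your plan is, in outline, exactly that proof: conjugation $v=\theta z$, splitting of the conjugated drift into a self-adjoint and an antisymmetric part, a pointwise weighted identity integrated over $Q$, the favorable sign of the boundary term on $\Sigma$ coming from $\eta_0=0$ on $\Gamma$ and $\eta_0>0$ inside, vanishing temporal boundary terms since $\theta$ and its weighted derivatives tend to $0$ as $t\to 0^+,T^-$, positivity of the principal terms where $|\nabla\eta_0|>0$ with $\mu$ fixed large, and a cutoff to localize the remainder into $\mathcal{B}$. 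All of that is correct and is how the estimate is established.

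There is, however, one substantive error in the stochastic bookkeeping, and it sits precisely at the point you flag as your ``main obstacle.'' You assert that each Itô correction is ``proportional to $\theta^2Z^2$.'' That is false for two of the three quadratic functionals you propose to differentiate: since the martingale part of $v$ is $\theta Z\,dW(t)$, the martingale part of $\nabla v$ is $\nabla(\theta Z)\,dW(t)$, so $d\int_{\mathcal{G}}|\nabla v|^2\,dx$ carries the correction $\int_{\mathcal{G}}|\nabla(\theta Z)|^2\,dx\,dt$, and $d\int_{\mathcal{G}}(\nabla\ell\cdot\nabla v)\,v\,dx$ carries the covariation $\int_{\mathcal{G}}(\nabla\ell\cdot\nabla(\theta Z))\,\theta Z\,dx\,dt$. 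These involve $\nabla Z$, which is \emph{not} controlled by the right-hand side of \eqref{carback5.8}; if such terms landed on the wrong side, the scheme as you describe it would not close, so this is a genuine gap rather than a checking exercise about powers of $\lambda$ and $\gamma$. The reason the estimate nevertheless holds — and the stochastic-specific ingredient of the Tang--Zhang argument missing from your plan — is structural: for the \emph{backward} equation the $|\nabla(\theta Z)|^2$ correction enters the identity with the favorable sign (on the same side as the positive principal terms) and may simply be discarded, while the covariation term integrates by parts into $-\tfrac12\int_{\mathcal{G}}\Delta\ell\,(\theta Z)^2\,dx$, of size $O(\lambda\gamma)\,\theta^2Z^2$; only the genuine $(dv)^2$-corrections, multiplied by the zeroth-order coefficient of size $O(\lambda^2\gamma^2)$, produce the term $\lambda^2\gamma^2\theta^2Z^2$, which is exactly why that weight, and nothing worse, appears in \eqref{carback5.8}. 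With this sign/structure observation added (and a routine approximation argument to justify the pointwise computations for mild solutions), your outline becomes the standard proof.
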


In what follows, we fix \( \mu = \overline{\mu} = \max(\mu_0, \mu_1) \), where \( \mu_0 \) (resp. \( \mu_1 \)) is the constant given in Lemma \ref{lm1.1} (resp. Lemma \ref{lm1.13.2252}). Moreover, to derive the appropriate Carleman estimate for the system \eqref{ADJSO1back}, we  define the following modified functions:
\begin{align}\label{rec1}
\overline{\alpha} \equiv \overline{\alpha}(t,x) = \frac{e^{\overline{\mu}\eta_0(x)} - e^{2\overline{\mu}\vert\eta_0\vert_\infty}}{\ell(t)}, \qquad
\overline{\theta} = e^{\lambda\overline{\alpha}}, \qquad \overline{\gamma} \equiv \overline{\gamma}(t) = \frac{1}{\ell(t)},
\end{align}
where
\begin{equation*}
\ell(t) = 
\begin{cases}
    \begin{array}{ll}
        t(T-t)  & \textnormal{for }\, 0 \leq t \leq T/2, \\
        T^2/4 & \textnormal{for }\, T/2 \leq t \leq T.
    \end{array}
\end{cases}
\end{equation*}
We also denote by
\begin{align}\label{inteIbar}
\overline{I}_{t_1,t_2}(z) = \mathbb{E} \int_{t_1}^{t_2} \int_\mathcal{G} \overline{\theta}^2 \overline{\gamma}^3 z^2 \,dx\,dt 
+ \mathbb{E} \int_{t_1}^{t_2} \int_\mathcal{G} \overline{\theta}^2 \overline{\gamma} \vert\nabla z\vert^2 \,dx\,dt, \quad 0 \leq t_1 \leq t_2 \leq T.
\end{align}

We have the following Carleman estimate for the system \eqref{ADJSO1back}.
\begin{lm}\label{lem4.5stthm} 
Assume that \eqref{asumpforcar} holds. Then, there exists a constant \( C > 0 \), and large constants \( \overline{\beta}, \overline{\lambda} \geq 1 \), depending only on \( \mathcal{G} \), \( \mathcal{G}_0 \), \( \mathcal{O}_d \), \( \overline{\mu} \), and \( T \), such that for all \( \beta_i \geq \overline{\beta} \) (for \( i = 1, 2, \dots, m \)) and \( \lambda \geq \overline{\lambda} \), the solution \( (\phi; \psi_i, \Psi_i) \) of \eqref{ADJSO1back} satisfies that
\begin{align}\label{improvedCarl}
\begin{aligned}
&\,\mathbb{E} |\phi(T)|^2_{L^2(\mathcal{G})}+\overline{I}_{0,T}(\phi)+\overline{I}_{0,T}(h)+\mathbb{E}\iint_Q \overline{\theta}^2\overline{\gamma}^3 H^2 \,dx\,dt\\
&\leq C \left[\lambda^{11}  \mathbb{E}\iint_{Q_0} \theta^2 \gamma^{11} \phi^2 \, \,dx\,dt+\lambda^5 \mathbb{E}\iint_{Q}\theta^2\gamma^5\left| \displaystyle H - a_2\phi\right|^2\,dx\,dt\right],
\end{aligned}
\end{align}
where $(h,H)= \left(\sum_{i=1}^m\alpha_i\psi_i,\sum_{i=1}^m\alpha_i\Psi_i\right)$.
\end{lm}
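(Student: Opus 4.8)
The plan is to first collapse the $m$ follower variables into the single pair $(h,H)=\big(\sum_{i=1}^m\alpha_i\psi_i,\ \sum_{i=1}^m\alpha_i\Psi_i\big)$, which is legitimate precisely because of \eqref{asumpforcar}. Multiplying the $\psi_i$-equation in \eqref{ADJSO1back} by $\alpha_i$ and summing, and using $\widetilde{\mathcal O}_{i,d}=\mathcal G$, $\widetilde\alpha_i=\alpha_i$, $\mathcal O_{i,d}=\mathcal O_d$, one sees that $\phi$ solves the forward equation with drift $F_0=-a_1\phi+h\,\mathbbm 1_{\mathcal O_d}$ and diffusion $F_1=H-a_2\phi$, while $(h,H)$ solves the backward equation with drift $F_2=a_1h+a_2H+b\,\phi$, where $b:=\sum_{i=1}^m\frac{\alpha_i}{\beta_i}\mathbbm 1_{\mathcal G_i}$, and diffusion $Z=H$, both under homogeneous Dirichlet conditions, with $\phi(0)=\phi_0$ and $h(T)=0$. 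Two features are decisive: $F_1=H-a_2\phi$ is exactly the quantity observed on the right of \eqref{improvedCarl}, and the coupling coefficient $b$ is uniformly small once the $\beta_i$ are large.

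Next I would apply the forward Carleman estimate \eqref{carfor5.6} of Lemma \ref{lm1.1} to $\phi$ (with $F_0,F_1$ as above) and the backward Carleman estimate \eqref{carback5.8} of Lemma \ref{lm1.13.2252} to $(h,H)$ (with $F_2,Z=H$), and add them. Crucially, I choose the Carleman observation subdomain $\mathcal B$ so that $\mathcal B\Subset\mathcal O_d\cap\mathcal G_0$, which is possible because this intersection is a nonempty open set by \eqref{asumpforcar} and $\eta_0$ may be built accordingly via \cite{BFurIman}. Summing bounds $I(\phi)+I(h)$ by the two local terms $\lambda^3\mathbb E\!\iint_{(0,T)\times\mathcal B}\theta^2\gamma^3(\phi^2+h^2)$ together with $\mathbb E\!\iint_Q\theta^2(|F_0|^2+|F_2|^2)$ and $\lambda^2\mathbb E\!\iint_Q\theta^2\gamma^2(|F_1|^2+H^2)$. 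Expanding the sources, the lower-order contributions $\theta^2a_1^2\phi^2$, $\theta^2a_1^2h^2$, $\theta^2a_2^2H^2$ and $\lambda^2\theta^2\gamma^2a_2^2\phi^2$ are absorbed into $I(\phi)+I(h)$ for $\lambda$ large (using that $\gamma$ is bounded below); the coupling term $\theta^2b^2\phi^2\le C(\min_i\beta_i)^{-2}\theta^2\phi^2$ is absorbed once the $\beta_i$ are large; and writing $H=(H-a_2\phi)+a_2\phi$ sends every global $H$-contribution into the observed term $\lambda^2\mathbb E\!\iint_Q\theta^2\gamma^2|H-a_2\phi|^2$ plus absorbable $\phi$-integrals. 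The only obstructions that survive are the two \emph{local} $h$-integrals $\mathbb E\!\iint_{(0,T)\times\mathcal O_d}\theta^2h^2$ and $\lambda^3\mathbb E\!\iint_{(0,T)\times\mathcal B}\theta^2\gamma^3h^2$.

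Removing these local $h$-integrals is the main obstacle, and it is where the overlap $\mathcal O_d\cap\mathcal G_0\ne\emptyset$ is used. Fix $\zeta\in C_c^\infty(\mathcal O_d\cap\mathcal G_0)$ with $0\le\zeta\le1$ and $\zeta\equiv1$ on $\mathcal B$. Since $h\,\mathbbm 1_{\mathcal O_d}$ is the drift source of $\phi$ and $\mathbbm 1_{\mathcal O_d}\equiv1$ on $\operatorname{supp}\zeta$, I would apply Itô's formula to $d(\zeta\,\Theta\,h\,\phi)$ with weight $\Theta=\lambda^3\theta^2\gamma^3$ (and once more with $\Theta=\theta^2$ for the weaker term), integrate over $Q$ and take expectations; the temporal boundary terms vanish because $\theta(0,\cdot)=0$ and $h(T)=0$. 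Substituting the two equations for the drifts of $\phi$ and $h$ and integrating by parts in $x$, the local integral $\mathbb E\!\iint\zeta\,\Theta\,h^2$ is rewritten through products $\phi h$, $\nabla\phi\cdot\nabla h$, $\phi H$ and terms of type $\phi\,\Delta(\zeta\Theta)$. By Cauchy--Schwarz with a small parameter $\varepsilon$, each factor carrying $h$, $\nabla h$ or $H$ is absorbed into $\varepsilon\big(I(h)+\lambda^2\mathbb E\!\iint_Q\theta^2\gamma^2|H-a_2\phi|^2\big)$ and each $\nabla\phi$ factor into $\varepsilon I(\phi)$, leaving only pure $\phi$-integrals supported in $\mathcal O_d\cap\mathcal G_0\subset\mathcal G_0$, hence localized in $Q_0$, at the price of higher powers of $\lambda$ and $\gamma$; this raising of exponents is exactly the source of the powers $11$ and $5$ in \eqref{improvedCarl}. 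The delicate point is the bookkeeping: tracking these powers and checking that every $h$-, $\nabla h$- and $H$-occurrence is indeed thrown into the small multiple of $I(h)$ or into the observed term. I would treat the two local $h$-integrals separately, since they carry different weight powers.

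Finally, choosing $\varepsilon$ small to absorb $\varepsilon(I(\phi)+I(h))$ on the left, then $\lambda$ and the $\beta_i$ large to absorb all remaining lower-order terms, gives $I(\phi)+I(h)$ controlled by the right-hand side of \eqref{improvedCarl}. To replace the weights $(\theta,\gamma)$, which degenerate as $t\to T$, by the modified weights $(\overline\theta,\overline\gamma)$ of \eqref{rec1} and to produce $\mathbb E|\phi(T)|^2_{L^2(\mathcal G)}$ and $\mathbb E\!\iint_Q\overline\theta^2\overline\gamma^3H^2$ on the left of \eqref{improvedCarl}, I combine the above with standard energy identities on $[T/2,T]$ for the forward $\phi$-equation and the backward $h$-equation (Itô applied to $|\phi|^2_{L^2(\mathcal G)}$ and $|h|^2_{L^2(\mathcal G)}$). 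On $[0,T/2]$ the two weight systems coincide and $\overline I_{0,T/2}\le I$; on $[T/2,T]$ the forward energy estimate bounds $\mathbb E|\phi(T)|^2$ and $\overline I_{T/2,T}(\phi)$ by the $[T/2,T]$-data, while the near-$T$ contribution to $\mathbb E\!\iint_Q\overline\theta^2\overline\gamma^3H^2$ is recovered from the backward energy identity for $h$, which, thanks to $h(T)=0$, expresses $\mathbb E\!\int_{T/2}^T\!\!\int H^2$ through $\mathbb E|h(T/2)|^2$ and the $[T/2,T]$-norms of $h,\nabla h,\phi$; on $[0,T/2]$ the same term is handled by the splitting $H=(H-a_2\phi)+a_2\phi$. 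All of these $[T/2,T]$-data are dominated by $\overline I(\phi)+\overline I(h)$, already controlled by the right-hand side. Combining the two regimes yields \eqref{improvedCarl} and completes the proof of Lemma \ref{lem4.5stthm}.
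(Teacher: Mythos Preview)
Your overall architecture is the same as the paper's: reduce to the two-component system $(\phi;h,H)$, add the forward Carleman for $\phi$ to the backward Carleman for $(h,H)$, eliminate the local $h^2$-observation via It\^o on $\zeta\Theta h\phi$, and then patch the degeneracy at $t=T$ with energy estimates using the modified weights $(\overline\theta,\overline\gamma)$. The paper does exactly this, in two steps (their Step~2 is the Carleman/duality part, their Step~1 the energy patching).

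There is, however, one genuine imprecision in your duality step. After computing $d(\zeta\Theta h\phi)$ with $\Theta=\lambda^3\theta^2\gamma^3$, the bulk terms $\Theta\zeta\,\nabla\phi\cdot\nabla h$ coming from the two Laplacians \emph{cancel}, but the cross term
\[
\mathbb E\iint_Q h\,\nabla\phi\cdot\nabla(\zeta\Theta)\,dx\,dt
\]
does \emph{not} disappear (nor does its companion $-\,\mathbb E\iint_Q \phi\,\nabla h\cdot\nabla(\zeta\Theta)$). Since $|\nabla(\zeta\Theta)|\le C\lambda^4\theta^2\gamma^4\zeta^{1/2}$, you cannot simultaneously put $h$ into $\varepsilon I(h)$ (weight $\lambda^3\gamma^3$) and $\nabla\phi$ into $\varepsilon I(\phi)$ (weight $\lambda\gamma$) and be left with only local $\phi^2$: splitting toward $I(h)$ leaves a \emph{local} $\lambda^5\theta^2\gamma^5\zeta|\nabla\phi|^2$ that is too heavy for $I(\phi)$, while splitting toward $I(\phi)$ throws back a local $\lambda^7\theta^2\gamma^7\zeta\,h^2$, worse than the term you are trying to remove. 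The paper resolves this with a \emph{second} identity: it keeps the local $\lambda^5\theta^2\gamma^5\zeta|\nabla\phi|^2$ term and then computes $d(\lambda^5\theta^2\gamma^5\zeta_2\,\phi^2)$ on a slightly larger cutoff $\zeta_2$, which converts local $|\nabla\phi|^2$ into local $\phi^2$ plus an observed $|H-a_2\phi|^2$; it is precisely this second step that produces the exponents $\lambda^{11}\gamma^{11}$ and $\lambda^5\gamma^5$ you correctly anticipated. Your sketch should either insert this second It\^o step, or integrate the $h\,\nabla\phi\cdot\nabla(\zeta\Theta)$ term by parts once more to trade it for $\phi\,\nabla h$ and $h\,\phi\,\Delta(\zeta\Theta)$ terms (which \emph{do} reduce to local $\phi^2$), but in either case the sentence ``each $\nabla\phi$ factor into $\varepsilon I(\phi)$'' is not justified as written.

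A smaller point: absorbing the source $\theta^2|b\phi|^2\le C(\min_i\beta_i)^{-2}\theta^2\phi^2$ after the backward Carleman does not require large $\beta_i$; large $\lambda$ already suffices since $I(\phi)$ carries $\lambda^3\gamma^3$. Where large $\beta_i$ is genuinely needed is in your final energy patching on $[T/2,T]$: the backward energy for $(h,H)$ with $h(T)=0$ gives $\overline I_{T/2,T}(h)+\mathbb E\!\int_{T/2}^T\!\int_{\mathcal G}\overline\theta^2\overline\gamma^3H^2\le C(\min_i\beta_i)^{-2}\,\overline I_{T/2,T}(\phi)$, and only then can this be absorbed into the left-hand side. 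This is exactly the paper's use of large $\beta_i$; your description of that step should make the absorption mechanism explicit to avoid the appearance of circularity.
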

\begin{proof} The proof is divided into two steps.\\
\textbf{Step 1.} We first note that \(\theta = \overline{\theta}\) and \(\gamma = \overline{\gamma}\) on \((0,T/2)\), then we have
\begin{align}\label{estim3.577sec}
\overline{I}_{0,\frac{T}{2}}(\phi) + \overline{I}_{0,\frac{T}{2}}(h) +\mathbb{E}\int_0^{T/2}\int_\mathcal{G} \overline{\theta}^2\overline{\gamma}^3H^2 \,dx\,dt\leq C \left[ I(\phi) + I(h)+\mathbb{E}\iint_Q \theta^2\gamma^3 H^2 \,dx\,dt\right].
\end{align}
Let us consider the function $\kappa\in C^1([0,T])$ such that
\begin{equation}\label{kappadef}
\kappa=0\quad \text{in}\,\; [0,T/4],\quad\textnormal{and}\quad\ \,\,\kappa= 1\quad \text{in} \,\,\;[T/2, T].
\end{equation}
Set \(\widetilde{\phi} = \kappa \phi\). Then \(\widetilde{\phi}\) is the solution of the forward system
\begin{equation}\label{equaphitildesec}
\begin{cases}
\begin{array}{ll}
d\widetilde{\phi} -\Delta\widetilde{\phi} \, dt = \left[-a_1\widetilde{\phi}+\kappa h\mathbbm{1}_{\mathcal{O}_d}+\kappa'\phi\right] \, dt + [-a_2\widetilde{\phi}+\kappa H] \, dW(t) & \text{in} \, Q, \\
\widetilde{\phi}= 0 & \text{on} \, \Sigma, \\
\widetilde{\phi}(0) = 0 & \text{in} \, \mathcal{G}.
\end{array}
\end{cases}
\end{equation}
By the classical energy estimate for the state $\widetilde{\phi}$, we have that
\begin{align*}
\begin{aligned}
&\,\mathbb{E}|\phi(T)|^2_{L^2(\mathcal{G})}+\mathbb{E}\int_{T/2}^T\int_\mathcal{G} \phi^2  \,dx\,dt\leq C\left[\mathbb{E}\int_{T/4}^{T/2}\int_\mathcal{G}  \phi^2 \,dx\,dt +  \mathbb{E}\int_{T/4}^T\int_\mathcal{G}  h^2 \,dx\,dt+\mathbb{E}\int_{T/4}^T\int_\mathcal{G}  H^2 \,dx\,dt\right].
\end{aligned}
\end{align*}
Recalling \eqref{rec1}, it is easy to see that
\begin{align*}
\begin{aligned}
\mathbb{E}|\phi(T)|^2_{L^2(\mathcal{G})}+\overline{I}_{\frac{T}{2},T}(\phi) \leq C\Bigg[&\mathbb{E}\int_{T/4}^{T/2}\int_\mathcal{G}  (\phi^2+h^2+H^2) \,dx\,dt +  \overline{I}_{\frac{T}{2},T}(h) \\
&+\mathbb{E}\int_{T/2}^T\int_\mathcal{G}  \overline{\theta}^2\overline{\gamma}^3H^2 \,dx\,dt\Bigg],
\end{aligned}
\end{align*}
which implies that
\begin{align}\label{estmmforcar}
\begin{aligned}
&\mathbb{E}|\phi(T)|^2_{L^2(\mathcal{G})}+\overline{I}_{\frac{T}{2},T}(\phi)+  \overline{I}_{\frac{T}{2},T}(h) +\mathbb{E}\int_{T/2}^T\int_\mathcal{G}  \overline{\theta}^2\overline{\gamma}^3H^2 \,dx\,dt\\
&\leq C\Bigg[I(\phi) + I(h)+\mathbb{E}\iint_Q \theta^2\gamma^3H^2 \,dx\,dt  \\
&\hspace{1cm}+\overline{I}_{\frac{T}{2},T}(h)+\mathbb{E}\int_{T/2}^T\int_\mathcal{G}  \overline{\theta}^2\overline{\gamma}^3H^2 \,dx\,dt\Bigg].
\end{aligned}
\end{align}
To estimate the last two terms on the right-hand side of \eqref{estmmforcar}, we use the classical energy estimate for the state $(h,H)$, then we derive that
\begin{align}\label{estibyenergyes}
    \overline{I}_{\frac{T}{2},T}(h)+\mathbb{E}\int_{T/2}^T\int_\mathcal{G}  \overline{\theta}^2\overline{\gamma}^3H^2 \,dx\,dt\leq C\sum_{i=1}^m \frac{\alpha_i^2}{\beta_i^2}\overline{I}_{\frac{T}{2},T}(\phi).
\end{align}
Combining \eqref{estmmforcar} and \eqref{estibyenergyes} and choosing a large $\beta_i$ ($i=1,2,\dots,m$), we find that
\begin{align}\label{estmmforcarstep1}
\begin{aligned}
&\mathbb{E}|\phi(T)|^2_{L^2(\mathcal{G})}+\overline{I}_{\frac{T}{2},T}(\phi)+  \overline{I}_{\frac{T}{2},T}(h) +\mathbb{E}\int_{T/2}^T\int_\mathcal{G}  \overline{\theta}^2\overline{\gamma}^3H^2 \,dx\,dt\\
&\leq C\bigg[I(\phi) + I(h)+\mathbb{E}\iint_Q \theta^2\gamma^3H^2 \,dx\,dt\bigg].
\end{aligned}
\end{align}
Adding \eqref{estim3.577sec} and \eqref{estmmforcarstep1}, we deduce that 
\begin{align}\label{estmmforcarstep1est1}
\begin{aligned}
&\mathbb{E}|\phi(T)|^2_{L^2(\mathcal{G})}+\overline{I}_{0,T}(\phi)+  \overline{I}_{0,T}(h) +\mathbb{E}\iint_Q  \overline{\theta}^2\overline{\gamma}^3H^2 \,dx\,dt\\
&\leq C\bigg[I(\phi) + I(h)+\mathbb{E}\iint_Q \theta^2\gamma^3H^2 \,dx\,dt\bigg].
\end{aligned}
\end{align}
\textbf{Step 2.} We introduce the following nonempty open subsets \( \mathcal{O}_1 \) and \( \mathcal{O}_2 \), such that
\[
\mathcal{O}_2 \Subset \mathcal{O}_1 \Subset \mathcal{O}_0 \equiv  \mathcal{O}_d\cap \mathcal{G}_0,
\]
where $\mathcal{O}_i\Subset \mathcal{O}_{i-1}$ ($i=1,2$) means that $\overline{\mathcal{O}_{i}}\subset \mathcal{O}_{i-1}$. We also define the functions \( \zeta_i \in C^{\infty}(\mathbb{R}^N) \) (for the existence of such functions, refer to, e.g., \cite{HSP18}) satisfying
\begin{align}\label{assmzeta}
\begin{aligned}
& 0 \leq \zeta_i \leq 1, \quad \zeta_i = 1 \,\, \text{in} \,\, \mathcal{O}_{3-i}, \quad \text{Supp}(\zeta_i) \subset \mathcal{O}_{2-i}, \\ 
& \frac{\Delta \zeta_i}{\zeta_i^{1/2}} \in L^\infty(\mathcal{G}), \quad \frac{\nabla \zeta_i}{\zeta_i^{1/2}} \in L^\infty(\mathcal{G}; \mathbb{R}^N), \quad i=1,2.
\end{aligned}
\end{align}
Set $w_j=\theta^2(\lambda\gamma)^j$, it is easy to see that for a large enough $\lambda$ and any $j\in\mathbb{N}$, we have
\begin{align}\label{timedrrives}
|\partial_t w_j| \leq C \lambda^{j+2} \theta^2 \gamma^{j+2},\qquad |\nabla(w_j\zeta_i)| \leq C \lambda^{j+1} \theta^2 \gamma^{j+1}\zeta_i^{1/2}, \quad i=1,2.
\end{align}
Applying Carleman estimate \eqref{carfor5.6} for the state $\phi$ with $\mathcal{B}=\mathcal{O}_2$, we have for a large enough $\lambda$,
\begin{align}\label{carfor5.6caphi}
\begin{aligned}
I(\phi)\leq C \left[ \lambda^3\mathbb{E}\int_0^T\int_{\mathcal{O}_2} \theta^2\gamma^3 \phi^2 \,dx\,dt+ \mathbb{E}\iint_Q \theta^2h^2 \,dx\,dt+\lambda^2\mathbb{E}\iint_Q \theta^2\gamma^2|H - a_2\phi|^2 \,dx\,dt\right]. 
\end{aligned}\end{align}
Using Carleman estimate \eqref{carback5.8} for the state $(h,H)$ with $\mathcal{B}=\mathcal{O}_2$, we have for a large enough $\lambda$,
\begin{align}\label{carback5.8carhH}
\begin{aligned}
I(h)\leq C \left[ \lambda^3\mathbb{E}\int_0^T\int_{\mathcal{O}_2} \theta^2\gamma^3 h^2 \,dx\,dt+ \mathbb{E}\iint_Q \theta^2 \phi^2 \,dx\,dt+\lambda^2\mathbb{E}\iint_Q \theta^2\gamma^2 H^2 \,dx\,dt\right]. 
\end{aligned}
\end{align}
Combining \eqref{carfor5.6caphi} and \eqref{carback5.8carhH}, and taking a large $\lambda$, we obtain that
\begin{align}\label{carsec5.8carhH}
\begin{aligned}
I(\phi)+I(h)\leq C \left[ \lambda^3\mathbb{E}\int_0^T\int_{\mathcal{O}_2} \theta^2\gamma^3 h^2 \,dx\,dt+\lambda^3\mathbb{E}\int_0^T\int_{\mathcal{O}_2} \theta^2\gamma^3 \phi^2 \,dx\,dt+\lambda^2\mathbb{E}\iint_Q \theta^2\gamma^2|H - a_2\phi|^2 \,dx\,dt\right]. 
\end{aligned}
\end{align}
Let us now estimate the first term on the right-hand side of \eqref{carsec5.8carhH}. We first observe that
\begin{align}\label{estfpelh}
\lambda^3\mathbb{E}\int_0^T\int_{\mathcal{O}_2} \theta^2\gamma^3 h^2 \,dx\,dt\leq \mathbb{E}\iint_Q w_3\zeta_1 h^2 \,dx\,dt.
\end{align}
Using Itô's formula for $d(w_3\zeta_1 h\phi)$, we find that
\begin{align}\label{foridentth}
    \begin{aligned}
\mathbb{E}\iint_Q w_3\zeta_1 h^2 \,dx\,dt=&-\mathbb{E}\iint_Q \partial_tw_3 \,\zeta_1 h\phi \,dx\,dt+\mathbb{E}\iint_Q \nabla\phi\cdot\nabla(w_3\zeta_1 h) \,dx\,dt\\
&-\mathbb{E}\iint_Q \nabla h\cdot\nabla(w_3\zeta_1\phi)\,dx\,dt-\sum_{i=1}^m\frac{\alpha_i}{\beta_i}\mathbb{E}\iint_Q w_3\zeta_1\mathbbm{1}_{\mathcal{G}_i}\phi^2 \,dx\,dt\\
&-\mathbb{E}\iint_Q w_3\zeta_1 H^2 \,dx\,dt.
    \end{aligned}
\end{align}
Let \(\varepsilon > 0\) be a small number, and using \eqref{timedrrives} and applying Young's inequality to each term on the right-hand side of \eqref{foridentth}, we obtain for large \(\lambda\),
\begin{align}\label{estm1to5}
    \begin{aligned}
\mathbb{E}\iint_Q w_3\zeta_1 h^2 \,dx\,dt\leq\varepsilon I(h)+\frac{C}{\varepsilon}\bigg[&\lambda^7\mathbb{E}\iint_Q \theta^2\gamma^7\zeta_1\phi^2 \,dx\,dt+\lambda^5\mathbb{E}\iint_Q \theta^2\gamma^5\zeta_1|\nabla \phi|^2 \,dx\,dt\\
&+\lambda^3\mathbb{E}\iint_Q \theta^2\gamma^3|H-a_2\phi|^2 \,dx\,dt\bigg]. 
\end{aligned}
\end{align}
Combining \eqref{carsec5.8carhH}, \eqref{estfpelh}, \eqref{foridentth}, \eqref{estm1to5}, and taking a sufficiently small \(\varepsilon\) and a large \(\lambda\), we deduce that
\begin{align}\label{carsec5.8carhHsec2}
\begin{aligned}
I(\phi)+I(h)\leq C \left[ \lambda^7\mathbb{E}\int_0^T\int_{\mathcal{O}_1} \theta^2\gamma^7 \phi^2 \,dx\,dt+\lambda^5\mathbb{E}\int_0^T\int_{\mathcal{O}_1} \theta^2\gamma^5 |\nabla\phi|^2 \,dx\,dt+\lambda^3\mathbb{E}\iint_Q \theta^2\gamma^3|H - a_2\phi|^2 \,dx\,dt\right]. 
\end{aligned}
\end{align}
Let us now absorb the gradient term on the right-hand side of \eqref{carsec5.8carhHsec2}. Notice that
\begin{align}\label{estiiforgrphi}
\lambda^5\mathbb{E}\int_0^T\int_{\mathcal{O}_1} \theta^2\gamma^5 |\nabla\phi|^2 \,dx\,dt\leq \mathbb{E}\iint_Q w_5\zeta_2 |\nabla\phi|^2 \,dx\,dt.
\end{align}
Computing $d(w_5\zeta_2\phi^2)$, we obtain that
\begin{align}\label{estimate543}
    \begin{aligned}
2\mathbb{E}\iint_Q w_5\zeta_2 |\nabla\phi|^2 \,dx\,dt=&\mathbb{E}\iint_Q \partial_t w_5\,\zeta_2\phi^2\,dx\,dt-2\mathbb{E}\iint_Q \phi\nabla\phi\cdot\nabla(w_5\zeta_2)\,dx\,dt\\
&-2\mathbb{E}\iint_Q w_5\zeta_2a_1\phi^2\,dx\,dt+2\mathbb{E}\iint_Q w_5\zeta_2\phi h \,dx\,dt\\
&+\mathbb{E}\iint_Q w_5\zeta_2|H-a_2\phi|^2\,dx\,dt.
    \end{aligned}
\end{align}
Fix a small \(\varepsilon\), and applying Young's inequality, then for a large \(\lambda\), it follows that
\begin{align}\label{estimfr6to10}
    \begin{aligned}
2\mathbb{E}\iint_Q w_5\zeta_2 |\nabla\phi|^2 \,dx\,dt\leq\varepsilon[I(\phi)+I(h)]+\frac{C}{\varepsilon}\bigg[\lambda^{11}\mathbb{E}\iint_Q \theta^2\gamma^{11}\zeta_2\phi^2 \,dx\,dt+\lambda^5\mathbb{E}\iint_Q \theta^2\gamma^5|H-a_2\phi|^2 \,dx\,dt\bigg].
    \end{aligned}
\end{align}
Combining \eqref{carsec5.8carhHsec2}, \eqref{estiiforgrphi}, \eqref{estimate543}, \eqref{estimfr6to10},  and taking a small enough $\varepsilon$ and large $\lambda$, we conclude that
\begin{align}\label{carsec5.lastestm}
\begin{aligned}
I(\phi)+I(h)\leq C \Bigg[ \lambda^{11}\mathbb{E}\iint_{Q_0} \theta^2\gamma^{11} \phi^2 \,dx\,dt+\lambda^5\mathbb{E}\iint_Q \theta^2\gamma^5|H - a_2\phi|^2 \,dx\,dt\Bigg]. 
\end{aligned}
\end{align}
On the other hand, it is easy to see that
\begin{align}\label{lastesttcar}
    \mathbb{E}\iint_Q \theta^2\gamma^3 H^2 \,dx\,dt\leq C\left[\mathbb{E}\iint_Q \theta^2\gamma^3 |H-a_2\phi|^2 \,dx\,dt+\mathbb{E}\iint_Q \theta^2\gamma^3 \phi^2 \,dx\,dt\right].
\end{align}
Finally, combining \eqref{estmmforcarstep1est1}, \eqref{carsec5.lastestm} and \eqref{lastesttcar} and taking a large enough $\lambda$, we deduce the desired Carleman estimate \eqref{improvedCarl}. This concludes the proof of Lemma \ref{lem4.5stthm}.
\end{proof}

We are now in a position to prove the observability inequality \eqref{observaineqback}.
\begin{proof}[Proof of Proposition \ref{Pro4.2back}]
Define the function \(\rho(t) := e^{-\overline{\lambda} \overline{\alpha}^*(t)}\) where \(\overline{\alpha}^*(t) = \min_{x \in \overline{\mathcal{G}}} \overline{\alpha}(t, x)\).
By Itô's formula, we compute \(d(\rho^{-2} |\psi_i|^2)\), \(i = 1, 2, \dots, m\), then we conclude that for any \( t \in (0, T) \),
\begin{align}
    \begin{aligned}
        \mathbb{E}\int_\mathcal{G} \rho^{-2}(t)|\psi_i(t)|^2 \,dx\leq C\Bigg[\mathbb{E}\int_t^T\int_\mathcal{G} \rho^{-2}|\psi_i|^2 \,dx\,dt+\mathbb{E}\int_t^T\int_\mathcal{G} \rho^{-2}\phi^2 \,dx\,dt+\mathbb{E}\int_t^T\int_\mathcal{G} \rho^{-2}|\Psi_i|^2 \,dx\,dt\Bigg].
    \end{aligned}
\end{align}
Then by Gronwall's inequlity, it follows that
\begin{align}
    \begin{aligned}
\mathbb{E}\iint_Q \rho^{-2}|\psi_i|^2 \,dx\,dt\leq C\bigg[\mathbb{E}\iint_Q \rho^{-2}\phi^2 \,dx\,dt+\mathbb{E}\iint_Q \rho^{-2}|\Psi_i|^2 \,dx\,dt\bigg],
    \end{aligned}
\end{align}
which leads to
\begin{align}\label{esttim1obs}
    \begin{aligned}
\sum_{i=1}^m\mathbb{E}\iint_Q \rho^{-2}|\psi_i|^2 \,dx\,dt+\sum_{i=1}^m\mathbb{E}\iint_Q \rho^{-2}|\Psi_i|^2 \,dx\,dt\leq C\bigg[\mathbb{E}\iint_Q \rho^{-2}\phi^2 \,dx\,dt+\sum_{i=1}^m\mathbb{E}\iint_Q \rho^{-2}|\Psi_i|^2 \,dx\,dt\bigg].
    \end{aligned}
\end{align}
On the other hand, it is easy to see that
\begin{align}\label{esttim2obs}
    \mathbb{E}\iint_Q \rho^{-2}\phi^2 \,dx\,dt+\sum_{i=1}^m\mathbb{E}\iint_Q \rho^{-2}|\Psi_i|^2 \,dx\,dt\leq C\bigg[\overline{I}_{0,T}(\phi)+\mathbb{E}\iint_Q \overline{\theta}^2\overline{\gamma}^3 H^2 \,dx\,dt\bigg],
\end{align}
where  $H=\sum_{i=1}^m\alpha_i\Psi_i$. Finally, combining \eqref{esttim1obs} and \eqref{esttim2obs} with the Carleman estimate \eqref{improvedCarl}, we deduce the desired observability inequality \eqref{observaineqback}.
\end{proof}

\end{document}